\newtheorem{thm}{Theorem}
 \newtheorem{cor}{Corollary}
 \newtheorem{lem}{Lemma}
 \newtheorem{prop}{Proposition}
 \theoremstyle{definition}
 \newtheorem{defn}{Definition}
 \newtheorem{ex}{Example}
 \newtheorem{rem}{Remark}
\newcommand{\Hom}{\operatorname{Hom}}
\newcommand{\wh}{\widehat}
\newcommand{\mbb}{\mathbb}
\newcommand{\mbf}{\mathbf}
\newcommand{\mcal}{\mathcal}
\newcommand{\mrk}{\mathfrak}
\newcommand{\ff}{{}'\!\mrk f}
\title[Deformation and Drinfeld double]{Drinfeld double of deformed quantum algebras}
\author[Z. Fan and J. Xing]{Zhaobing Fan and Junjing Xing}
\address{College of Automation and College of Science, Harbin Engineering University, Harbin 150001, China}
\email{fanz@math.ksu.edu  (Fan)}
\address{College of Automation, Harbin Engineering University, Harbin 150001, China}
\email{xingjunjing2017@hrbeu.edu.cn (Xing)}
\date{\today}
\keywords{quantum algebras, deformation, Drinfeld double}
\subjclass[2010]{17B37, 20G42, 81R50}
\begin{document}
\begin{abstract}
 We provide a deformation, $\mathfrak{f}_{\beta}$, of Lusztig algebra $\mathbf{f}$. Various quantum algebras in literatures, 
 including  half parts of two-parameter quantum algebras, quantum superalgebras, and multi-parameter quantum algebras/superalgebras, 
 are all specializations of $\mathfrak{f}_{\beta}$. Moreover, 
 $\mathfrak{f}_{\beta}$ is isomorphic to Lusztig algebra $\mathbf{f}$ up to a twist. 
 As a consequence, half parts of those quantum algebras are isomorphic to Lusztig algebra $\mathbf{f}$ 
 over a big enough ground field up to certain twists. 
 We further construct the entire algebra $\mbf U_{\beta,\xi}$ by Drinfeld double construction. As special cases, 
 above quantum algebras all admit a Drinfeld double construction under certain assumptions.
\end{abstract}
\maketitle

\setcounter{tocdepth}{1}
\tableofcontents

\section{Introduction}
In \cite{FL1}, the first author and Li constructed an algebra by using mixed perverse sheaves on representation varieties of a quiver,
and showed  that the algebra is isomorphic to the negative part of two-parameter quantum algebras defined in literatures.
As a byproduct, the half part of two-parameter quantum algebras is isomorphic to that of Lusztig algebra $\mathbf{f}$ 
defined in \cite[Chapter 1]{Lusztigbook} after a twist involving only the second parameter.
Later on, in \cite{CFLW}, Clark-Fan-Li-Wang showed that under certain assumptions, 
the half part of quantum superalgebras is isomorphic to its non-super analogue after a twist.
These results provided a novel way to study problems related to two-parameter quantum algebras/quantum superalgebras,
for example, in \cite{C}, the author showed that knot invariants from quantum algebra and its super analogy are the same using the twist in \cite{CFLW}.

It's a natural question how far the above results can be generalized to? In this paper, 
we provide a deformation $\mathfrak{f}_{\beta}$ of the Lusztig algebra $\mathbf{f}$ 
associated with a Cartan datum $(I,\cdot)$ over $\mathbb{F}$, where $\mathbb{F}$ is a ground field containing $\mbb Q(v)$ and $\alpha,\beta:\mathbb{N}[I]\times\mathbb{N}[I]\rightarrow\mathbb{F}$ are two multiplicative bilinear forms. 
Various quantum algebras in literatures, including two-parameter quantum algebras in \cite{FL1}, 
quantum superalgebras in \cite{CHW1}, multi-parameter quantum algebras in \cite{HPR} and multi-parameter quantum superalgebras in \cite{KKO13}, 
are all specializations of $\mathfrak{f}_{\beta}$. We further show the following theorem (Theorem \ref{thm1} in Section \ref{sec2.2}).
\\

\noindent$\textbf{Theorem\ A}$\  $For\ all\ homogenous\ elements\ x,y\in\mathfrak{f}_{\beta},\ let\ x*y=\gamma(|x|,|y|)$
$xy$,
$\ then$
$$(\mathfrak{f}_{\beta}\ ,\ *)\simeq \mathbf{f}\otimes_{\mathbb{Q}(v)}\mathbb{F},$$
$where\ \gamma:\mathbb{N}[I]\times\mathbb{N}[I]\rightarrow\mathbb{F}\ is\ a\ multiplicative\ bilinear\ form\ satisfying\ \eqref{gamma}.$
\\

As special cases, Theorem A shows that half parts of those quantum algebras are isomorphic to Lusztig algebra $\mathbf{f}$ after certain twists.
This partially answers the earlier question we raised.
Theorem A also indicates that there might be some relationships among the representations of those quantum algebras.
 In fact, in \cite{FLL}, Fan-Li-Lin proved that the categories of weight modules of those quantum algebras are all equivalent.

We note that the twisted $*$-product on $\mathfrak{f}_{\beta}$ is a variant of the deformations defined in \cite[Section 4]{FL1} 
(resp. \cite[Section 2]{CFLW}) relating one-parameter quantum groups to its two-parameter analogy (resp. super analogy). 
Surprisingly, the choice of $\gamma(-,-)$ in Theorem A is not unique. So Theorem A not only generalizes the results in \cite{FL1} 
(resp. \cite{CFLW}) to other quantum algebras, but also discovers new phenomena for two-parameter quantum algebras (resp. quantum superalgebras).

Drinfeld \cite{Dri87} built a braided Hopf algebra out of any finite-dimensional Hopf algebra with the invertible antipode, 
which is called ``the Drinfeld double construction'', providing the solution of the Yang-Baxter equation. 
In recent years, Drinfeld doubles have been studied by various authors 
as a useful tool of recovering the entire quantum groups from half parts.  
Xiao \cite{Xiao1} got a complete realization of quantum groups 
by applying the reduced form of the Drinfeld double of a Hall algebra to the generic composition algebra.
 Benkart-Witherspoon \cite{BW} characterized two-parameter quantum groups of type A 
 as the Drinfeld double of the certain Hopf subalgebras with respect to a skew-dual paring. 
 Following this work, Bergeron-Gao-Hu \cite{BGH} provided the Drinfeld double construction of 
 two-parameter quantum groups of types B, C and D.
 Moreover,  Hu-Rosso-Zhang \cite{HRZ}, Hu-Zhang \cite{HZ} and Gao-Hu-Zhang \cite{GHZ}  
 gave the Drinfeld double construction of two-parameter quantum affine algebras 
 $U_{r,s}(\widehat{\mathfrak{sl}_n})$, $U_{r,s}(C_n^{(1)})$ and $U_{r,s}(G_2^{(1)})$, respectively.

In this paper, we study the entire quantum group of the algebra $\mathfrak{f}_{\beta}$ 
by constructing a  Drinfeld double $\mbf U_{\beta,\xi}$, 
where $\xi:\mathbb{N}[I]\times\mathbb{N}[I]\rightarrow\mathbb{F}$ is a symmetric multiplicative bilinear form. 
As special cases, two-parameter quantum algebras, quantum superalgebras, 
and multi-parameter quantum algebras/superalgebras all admit a Drinfeld double construction.

This paper is organized as follows. In Section \ref{sec2}, we provide the deformation $\mathfrak{f}_{\beta}$ of 
Lusztig algebra $\mathbf{f}$ in ~\cite[Chapter 1]{Lusztigbook} 
and show that $\mathfrak{f}_{\beta}$ is isomorphic to 
the ordinary quantum algebra up to a twist. In Section \ref{sec3}, 
we construct the Drinfeld double, $\mbf U_{\beta,\xi}$, 
of the algebra  $\mathfrak{f}_{\beta}$ as a Hopf algebra. In Section \ref{sec4}, 
we provide various specializations of $\mbf U_{\beta,\xi}$ 
which are shown to be isomorphic to various quantum algebras in literatures.\vspace{8pt}

\noindent\textbf{Acknowledgements.}The first author thanks Zongzhu Lin for earlier discussions which motivate this project.
We thank Yiqiang Li, Sean Clark and Weiqiang Wang for fruitful collabrations, which provided some examples for this project.
Z. Fan is partially supported by the NSF of China grant 11671108, the NSF of Heilongjiang
Province grant LC2017001 and the Fundamental Research Funds for the central universities GK2110260131.

\section{Deformation of half part of quantum algebras}\label{sec2}
In this section, we provide a deformation, $\mathfrak{f}_{\beta}$, of the Lusztig algebra in ~\cite[Chapter 1]{Lusztigbook} 
depending on two multiplicative bilinear forms $\alpha,\beta:\mathbb{N}[I]\times\mathbb{N}[I]\rightarrow\mathbb{F}$ and 
study some examples of $\mathfrak{f}_{\beta}$.

\subsection{Deformation of the Lusztig algebra}\label{sec2.1}
Let us fix a Cartan datum $(I,\cdot)$.
We simply write $d_i=\frac{i\cdot i}{2}$ and $a_{ij}=2\frac{i\cdot j}{i\cdot i}$.
Let $v$ be an indeterminate. We set $v_i=v^{d_i}$ for all $i\in I$. 
Given a field $\mbb F$ containing $\mbb Q(v)$, let $\alpha, \beta: \mbb N[I]\times \mbb N[I] \rightarrow \mbb F$
be two multiplicative bilinear forms satisfying the following properties:
\begin{itemize}
  \item[(i)] $\beta(i,j)\alpha(j,i)=\beta(j,i)\alpha(i,j),\quad \forall i,j\in I$;
  \item[(ii)] $\beta(i,j)\beta(j,i)=1,\quad \forall i,j\in I$, we further assume that $\beta(i,i)=1$.
\end{itemize}

Let ${}'\!\mrk f$ be the free unital associative $\mbb F$-algebra generated by $\{\theta_i, i\in I\}$.
The algebra ${}'\!\mrk f$ is an $\mbb N[I]$-graded algebra if we set the grading of $\theta_i$ to be $i$.
For any homogenous element $x\in {}'\!\mrk f$, we denote by $|x|$ the grading of $x$.

We define an algebra structure on ${}'\!\mrk f\otimes {}'\!\mrk f$ by
\[
(x_1\otimes x_2)(y_1\otimes y_2)=v^{-|y_1|\cdot |x_2|}\beta(|x_2|,|y_1|) x_1y_1\otimes x_2y_2,
\]
for any homogenous elements $x_2, y_1\in {}'\!\mrk f$.

Let $r:\ff\rightarrow \ff\otimes \ff$ be the $\mbb F$-algebra homomorphism such that
\[r(\theta_i)=\theta_i\otimes 1+1\otimes \theta_i, \quad \forall i\in I.\]

\begin{prop}
  There is a unique symmetric bilinear form $(-,-): \ff\times \ff\rightarrow \mbb F$ such that
  \begin{itemize}
   \item[(a)] $(1,1)=1$ and $(\theta_i, \theta_j)=\delta_{ij}(1-v_i^{-2})^{-1},\quad \forall i,j\in I$;
   \item[(b)] $(x, y'y'')=(r(x), y'\otimes y''),\quad \forall x, y', y''\in \ff$;
   \item[(c)] $(x'x'', y)=(x'\otimes x'', r(y)),\quad \forall x', x'', y\in \ff$.
  \end{itemize}
  Here the bilinear form on $\ff\otimes \ff$ is defined by
  \[(x_1\otimes x_2, y_1\otimes y_2)=\alpha(|x_1|, |x_2|)(x_1, y_1)(x_2, y_2),\]
  for any homogenous elements $x_1, x_2, y_1, y_2 \in \ff$.
\end{prop}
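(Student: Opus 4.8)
The plan is to adapt Lusztig's construction of the bilinear form on the free algebra (\cite[Chapter 1]{Lusztigbook}), carrying the deformation parameters $\alpha,\beta$ through the argument. Throughout I use that $\ff$ is graded with one-dimensional pieces $\ff_i=\mbb F\theta_i$ and that, by freeness, $\ff_\nu=\bigoplus_{i}\theta_i\ff_{\nu-i}$ for $\nu\neq 0$.

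First, for \textbf{uniqueness}, I would check that any form satisfying (a)--(c) is graded, i.e. $(x,z)=0$ for homogeneous $x,z$ of distinct degrees, by a short induction on degree from the axioms. On $\ff_\nu\times\ff_\nu$, write the $\ff_i\otimes\ff_{\nu-i}$-component of $r(x)$ as $\theta_i\otimes{}_i r(x)$, which defines $\mbb F$-linear lowering operators ${}_i r\colon\ff_\nu\to\ff_{\nu-i}$. Condition (b) with $y'=\theta_i$, $y''=y$ then reads
$$(x,\theta_i y)=\alpha(i,|y|)(\theta_i,\theta_i)({}_i r(x),y)=\frac{\alpha(i,|y|)}{1-v_i^{-2}}({}_i r(x),y).$$
Since ${}_i r$ strictly lowers degree and $\ff_\nu=\bigoplus_i\theta_i\ff_{\nu-i}$, this recursion together with $(1,1)=1$ determines every value $(x,z)$ by induction on $|\nu|$; hence at most one form exists.

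For \textbf{existence} I would promote the recursion to a definition: the operators ${}_i r$ are well defined, the direct-sum decomposition makes the displayed formula an unambiguous inductive definition of a graded form with (a) holding by construction. Two things remain. The first is that (b) holds for arbitrary $y',y''$, not only $y'=\theta_i$; I would obtain this by induction on $|y'|$, peeling off one generator at a time and invoking coassociativity of $r$, i.e. $(r\otimes 1)r=(1\otimes r)r$. Coassociativity itself is a preliminary I would settle first: both composites are algebra homomorphisms from $\ff$ into an iterated twisted tensor product agreeing on the generators $\theta_i$, hence are equal once one checks that the iterated twisted product is associative, which follows from the multiplicativity and bilinearity of $\beta$ together with the bilinearity of the pairing $\cdot$.

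The crux is \textbf{symmetry}, $(x,y)=(y,x)$, which I expect to be the main obstacle. The recursion above strips a generator from the left of the second argument, so it is manifestly one-sided; to prove symmetry I would introduce mirror operators $r_i$ extracting the $\ff\otimes\theta_i$-component of $r$, establish the right-handed recursion for the \emph{same} form, and show by induction on $|\nu|$ that the left- and right-stripping procedures produce the same scalar. Matching the two forces a commutation relation between ${}_i r$ and $r_j$, and this is exactly where the hypotheses on the parameters are consumed: the compatibility $\beta(i,j)\alpha(j,i)=\beta(j,i)\alpha(i,j)$ equates the twist factors produced when the two reductions are interchanged, while $\beta(i,j)\beta(j,i)=1$ and $\beta(i,i)=1$ normalize and keep that interchange consistent. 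Granting symmetry, condition (c) is then formal: for homogeneous arguments the tensor form $(x_1\otimes x_2,y_1\otimes y_2)=\alpha(|x_1|,|x_2|)(x_1,y_1)(x_2,y_2)$ is itself symmetric on matching bidegrees, since nonvanishing forces $|x_1|=|y_1|,\ |x_2|=|y_2|$ and the scalar $\alpha(|x_1|,|x_2|)$ depends only on the bidegree, so
$$(x'x'',y)=(y,x'x'')=(r(y),x'\otimes x'')=(x'\otimes x'',r(y)).$$
I expect essentially all of the difficulty to lie in the symmetry step and in the bookkeeping of the twist $v^{-|y_1|\cdot|x_2|}\beta(|x_2|,|y_1|)$ as it threads through the two-sided induction.
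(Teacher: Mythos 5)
Your proposal is correct and takes essentially the same route as the paper: the paper's proof simply cites Proposition 13 of \cite{FL1} -- itself the adaptation of Lusztig's Chapter 1 construction -- with the powers of $t$ there replaced by the bilinear forms $\alpha$ and $\beta$, which is exactly the parameter-threading argument you outline. Your locating of the hypothesis $\beta(i,j)\alpha(j,i)=\beta(j,i)\alpha(i,j)$ at the symmetry step agrees in substance with the paper's remark that property (i) is needed for part (c), since once (b) holds, symmetry and (c) are interchangeable statements.
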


\begin{proof}
  The proof  for Proposition 13 in \cite{FL1} works through if we
  replace $t^{2(2\delta_{ij}\Omega_{ii}-\Omega_{ij})}$ and $t^{\Omega_{ji}-\Omega_{ij}}$ by corresponding bilinear forms $\alpha(i,j)$ and $\beta(i,j)$,
  respectively. For part (c), one need use the property (i).
\end{proof}

Let $\mrk J$ be the radical of the bilinear form $(-,-)$.
It is clear that $\mrk J$ is a two-sided ideal of $\ff$.
Denote the quotient algebra of $\ff$ by $\mrk J$ by \[\mrk f_{\beta}=\ff/\mrk J.\]

For any $i\in I$, let $r_i ({\rm resp.}\ {}_ir): \ff\rightarrow \ff$
be the unique linear map satisfying the following properties:
\begin{equation*}
  \begin{split}
    r_i(1)=0,\ r_i(\theta_j)=\delta_{ij},\ \forall j\in I\ {\rm and}\
    r_i(xy)=v^{-i\cdot |y|}\beta(i, |y|)r_i(x)y+xr_i(y);\\
    {}_ir(1)=0,\ {}_ir(\theta_j)=\delta_{ij},\ \forall j\in I\ {\rm and}\
    {}_ir(xy)={}_ir(x)y+v^{-i\cdot |x|}\beta(|x|,i)x{}_ir(y).
  \end{split}
\end{equation*}
By an induction on $|x|$, we can show that $r(x)=r_i(x)\otimes \theta_i$ (resp. $r(x)= \theta_i\otimes{}_ir(x)$)
plus other terms.
By this property, we have the following lemma.
\begin{lem}
  $(a)$ For any $x, y\in \ff$, we have
  \[(y\theta_i, x)=\alpha(|y|, i)(y, r_i(x))(\theta_i, \theta_i)\ {\rm and}\
  (\theta_iy, x)=\alpha( i, |y|)(\theta_i, \theta_i)(y, {}_ir(x)).\]
  $(b)$ For any $i\in I$, the linear maps ${}_ir, r_i: \ff\rightarrow \ff$ send $\mrk J$ to itself.
\end{lem}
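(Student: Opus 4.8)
The plan is to deduce part (a) directly from the adjunction property (c) of the form together with the structural description of $r$ recorded just above the statement, and then to read off part (b) as a purely formal consequence of (a).

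For part (a), I would rewrite each pairing using property (c), which converts a product in the first slot into the coproduct $r$: thus $(y\theta_i, x)=(y\otimes\theta_i, r(x))$ and $(\theta_i y, x)=(\theta_i\otimes y, r(x))$. Next I expand $r(x)$ via the two complementary decompositions established by the preceding induction, namely $r(x)=r_i(x)\otimes\theta_i+(\ast)$, where every term of $(\ast)$ has the form $a\otimes b$ with $|b|\neq i$, and $r(x)=\theta_i\otimes{}_ir(x)+(\ast\ast)$, where every term of $(\ast\ast)$ has the form $a\otimes b$ with $|a|\neq i$. Since the pairing on $\ff\otimes\ff$ factors as $(x_1\otimes x_2, y_1\otimes y_2)=\alpha(|x_1|,|x_2|)(x_1,y_1)(x_2,y_2)$, the first expansion feeds the factor $(\theta_i, b)$ and the second feeds $(\theta_i, a)$; because $(-,-)$ vanishes between elements of distinct $\mbb N[I]$-degrees and the only elements of $\ff$ of degree $i$ are scalar multiples of $\theta_i$, every term of $(\ast)$ and of $(\ast\ast)$ is annihilated. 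Only the displayed leading term survives in each case, and evaluating it gives $\alpha(|y|,i)(y,r_i(x))(\theta_i,\theta_i)$ and $\alpha(i,|y|)(\theta_i,\theta_i)(y,{}_ir(x))$ respectively, as claimed.

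The one genuinely delicate point—the nearest thing to an obstacle—is the assertion that the remaining terms contribute nothing. This rests on two facts: the grading-homogeneity of $(-,-)$, i.e. that it pairs homogeneous elements of different $\mbb N[I]$-degrees to zero, and that the degree-$i$ part of $\ff$ is spanned by $\theta_i$. Both are consequences of the inductive construction of the form in the Proposition, but I would invoke them explicitly; once they are granted, part (a) is a direct unwinding of definitions.

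For part (b), let $z\in\mrk J$. Specializing the first identity of (a) to $x=z$ and an arbitrary $y=w$ yields $(w\theta_i, z)=\alpha(|w|,i)(w,r_i(z))(\theta_i,\theta_i)$, and the left-hand side vanishes for every $w$ because $z$ lies in the radical. As $\alpha(|w|,i)\in\mbb F^\times$ and $(\theta_i,\theta_i)=(1-v_i^{-2})^{-1}\neq 0$, this forces $(w,r_i(z))=0$ for all $w$, so $r_i(z)\in\mrk J$. Applying the same reasoning to the second identity of (a) shows ${}_ir(z)\in\mrk J$, and hence both maps preserve $\mrk J$.
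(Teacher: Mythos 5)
Your proof is correct and follows exactly the route the paper intends: the lemma is stated right after the observation that $r(x)=r_i(x)\otimes\theta_i$ (resp. $\theta_i\otimes{}_ir(x)$) plus terms of other bidegrees, and part (a) is precisely property (c) of the pairing combined with that decomposition and degree-orthogonality, while part (b) follows formally from (a) together with the invertibility of $\alpha(|w|,i)$ and $(\theta_i,\theta_i)\neq 0$. Your write-up simply makes explicit the details the paper leaves implicit.
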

By using the same argument as that for Lemma 1.2.15 in \cite{Lusztigbook}, we have the following lemma.
\begin{lem}\label{lemrix}
  If a homogenous element $x\in \mrk f_{\beta}$ satisfies that $r_i(x)=0$ (resp. ${}_ir(x)=0$) for all $i\in I$ , then $x=0$.
\end{lem}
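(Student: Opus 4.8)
The plan is to deduce the result from the nondegeneracy of the bilinear form on $\mrk f_{\beta}$ together with the adjunction formula recorded in part (a) of the previous lemma. By construction $\mrk f_{\beta}=\ff/\mrk J$, where $\mrk J$ is precisely the radical of $(-,-)$; hence the induced symmetric bilinear form on $\mrk f_{\beta}$ is nondegenerate. Consequently, to prove $x=0$ in $\mrk f_{\beta}$ it suffices to show that $(z,x)=0$ for every $z\in\mrk f_{\beta}$.

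Since the form is graded—elements of distinct $\mbb N[I]$-degrees are orthogonal, as one checks by induction on the degree starting from $(1,1)=1$ and $(\theta_i,\theta_j)=\delta_{ij}(1-v_i^{-2})^{-1}$ and using properties (b), (c) of the Proposition—the pairing $(z,x)$ can only be nonzero when $|z|=|x|$. Writing $\nu=|x|$, we may assume $\nu\neq 0$, since the lemma concerns elements of positive degree (a nonzero scalar multiple of $1$ lies outside $\mrk J$). Now I would use that $\mrk f_{\beta}$ is generated as an algebra by $\{\theta_i\}_{i\in I}$, so that its degree-$\nu$ component is spanned by monomials $\theta_{i_1}\cdots\theta_{i_k}$ with $k\geq 1$. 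Each such monomial is of the form $y\theta_i$ with $y=\theta_{i_1}\cdots\theta_{i_{k-1}}$, so it is enough to verify $(y\theta_i,x)=0$ for all homogeneous $y$ and all $i\in I$.

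This last step is immediate from part (a): $(y\theta_i,x)=\alpha(|y|,i)\,(y,r_i(x))\,(\theta_i,\theta_i)$, which vanishes because $r_i(x)=0$ by hypothesis. Thus $(z,x)=0$ for all $z$, and nondegeneracy forces $x=0$. The parenthetical ``resp.'' statement is handled symmetrically: one instead spans the degree-$\nu$ component by monomials written as $\theta_i y$ and applies the companion identity $(\theta_i y,x)=\alpha(i,|y|)\,(\theta_i,\theta_i)\,(y,{}_ir(x))$, which vanishes when ${}_ir(x)=0$.

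I do not expect a serious obstacle here: the argument is a direct reduction and requires no further induction once part (a) is in hand. The only points demanding care are the two structural facts underlying the construction—that quotienting by the radical $\mrk J$ renders the form nondegenerate, and that the form respects the $\mbb N[I]$-grading—together with the harmless exclusion of the degree-zero part. This is exactly why the proof of Lemma 1.2.15 in \cite{Lusztigbook} transfers to the deformed setting, the deformation affecting only the scalar factors $\alpha(|y|,i)$ and $(\theta_i,\theta_i)$ in the adjunction formula, which are nonzero and therefore irrelevant to the vanishing.
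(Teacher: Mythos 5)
Your proof is correct and is essentially the paper's own argument: the paper simply invokes ``the same argument as for Lemma 1.2.15 in \cite{Lusztigbook}'', which is precisely your reduction---nondegeneracy of the induced form on $\mrk f_{\beta}=\ff/\mrk J$, spanning the degree-$\nu$ component ($\nu\neq 0$) by monomials $y\theta_i$ (resp.\ $\theta_i y$), and then killing all pairings via the adjunction formula in Lemma 1(a). Your explicit handling of the degree-zero caveat and of the harmless scalar factors $\alpha(|y|,i)(\theta_i,\theta_i)$ matches the intended adaptation of Lusztig's proof to the deformed setting.
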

For any $n\in \mbb N$ and $c\in \mbb F$, we set
\[[n]_c=\frac{c^n-c^{-n}}{c-c^{-1}},\quad [n]_c^!=\prod_{k=1}^n[k]_c \quad {\rm and}\quad
\begin{bmatrix}
  n\\k
\end{bmatrix}_c
=\frac{[n]_c^!}{[k]_c^![n-k]_c^!}.\]
Let
\[\theta_{i}^{(n)}=\frac{\theta_i^n}{[n]_{v_i}^!}.\]
By using $(1\otimes \theta_i)(\theta_i\otimes 1)=v_i^{-2}(\theta_i\otimes 1)(1\otimes \theta_i)$ 
and the quantum binomial formula in \cite[Section 1.3.5]{Lusztigbook},
 we have the following lemma.
\begin{lem}\label{lemirtheta}
 We have ${}_ir(\theta_{i}^{(n)})=v_i^{-(n-1)}\theta_{i}^{(n-1)}$ for any $n\in \mbb N$ and  $i\in I$.
\end{lem}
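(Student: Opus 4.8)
The plan is to compute $r(\theta_i^n)$ in full and then read off ${}_ir(\theta_i^n)$ from it, exploiting the decomposition $r(x)=\theta_i\otimes {}_ir(x)+(\text{other terms})$ recorded just before the lemma, where the ``other terms'' are those whose first tensor factor is homogeneous of degree $\neq i$. Since $r$ is an $\mbb F$-algebra homomorphism with $r(\theta_i)=\theta_i\otimes 1+1\otimes\theta_i$, I would set $a=\theta_i\otimes 1$ and $b=1\otimes\theta_i$, so that $r(\theta_i^n)=(a+b)^n$. The first task is to confirm the stated relation $ba=v_i^{-2}ab$ directly from the product on $\ff\otimes\ff$: this is exactly where the normalizations $i\cdot i=2d_i$ and $\beta(i,i)=1$ are used, since $(1\otimes\theta_i)(\theta_i\otimes 1)=v^{-i\cdot i}\beta(i,i)\,\theta_i\otimes\theta_i=v_i^{-2}(\theta_i\otimes 1)(1\otimes\theta_i)$. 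With this $v_i^{-2}$-commuting pair in hand, the quantum binomial formula of \cite[Section 1.3.5]{Lusztigbook} applies.

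The key observation is that the binomial expansion places all factors of $a$ on the left, $(a+b)^n=\sum_{t=0}^n c_{n,t}\,a^t b^{n-t}$, and the summand $a^t b^{n-t}$ has first tensor factor $\theta_i^t$, of degree $ti$. Hence it lies in the $\theta_i\otimes(-)$ part precisely when $t=1$, so only that single summand contributes to ${}_ir(\theta_i^n)$. Its coefficient is the geometric sum
\[1+v_i^{-2}+\cdots+v_i^{-2(n-1)}=\sum_{k=0}^{n-1}v_i^{-2k}=v_i^{-(n-1)}[n]_{v_i},\]
and a short computation in $\ff\otimes\ff$ gives $a\,b^{n-1}=\theta_i\otimes\theta_i^{n-1}$. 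Therefore the $\theta_i\otimes(-)$ component of $r(\theta_i^n)$ equals $v_i^{-(n-1)}[n]_{v_i}\,\theta_i\otimes\theta_i^{n-1}$, which yields ${}_ir(\theta_i^n)=v_i^{-(n-1)}[n]_{v_i}\,\theta_i^{n-1}$.

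Finally I would normalize: dividing by $[n]_{v_i}^!$ and using $[n]_{v_i}^!=[n]_{v_i}\,[n-1]_{v_i}^!$ gives
\[{}_ir(\theta_i^{(n)})=v_i^{-(n-1)}\frac{[n]_{v_i}}{[n]_{v_i}^!}\,\theta_i^{n-1}=v_i^{-(n-1)}\frac{\theta_i^{n-1}}{[n-1]_{v_i}^!}=v_i^{-(n-1)}\theta_i^{(n-1)},\]
as claimed. The argument is essentially bookkeeping, so there is no deep obstacle; the two points that demand care are matching the exponent convention of Lusztig's binomial formula against our relation $ba=v_i^{-2}ab$ (equivalently, substituting $v\mapsto v_i^{-1}$, which is harmless since the Gaussian coefficients are symmetric under $v\mapsto v^{-1}$) and correctly isolating the unique graded piece whose first factor is $\theta_i$. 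As an independent check, one can bypass the binomial formula entirely and run the twisted Leibniz recursion ${}_ir(\theta_i^n)=\theta_i^{n-1}+v_i^{-2}\theta_i\,{}_ir(\theta_i^{n-1})$, which follows from the defining property of ${}_ir$ together with $\beta(i,i)=1$, and whose solution is the same geometric sum.
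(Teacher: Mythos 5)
Your proof is correct and takes essentially the same approach as the paper: the paper's own (one-line) proof likewise invokes the relation $(1\otimes \theta_i)(\theta_i\otimes 1)=v_i^{-2}(\theta_i\otimes 1)(1\otimes \theta_i)$ together with the quantum binomial formula of \cite[Section 1.3.5]{Lusztigbook}, which amounts exactly to your expansion of $(a+b)^n$ and extraction of the $\theta_i\otimes(-)$ component. The concluding Leibniz-recursion check is a harmless alternative phrasing of the same computation.
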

\begin{prop}\label{propserrerelation}
The generators $\theta_i$ of $\mrk f_{\beta}$ satisfy the following relations.
\[\sum_{k+k'=1-a_{ij}}(-1)^k\beta(i,j)^{-k}\theta_i^{(k)}\theta_j\theta_i^{(k')}=0, \quad \forall i\neq j\in I.\]
\end{prop}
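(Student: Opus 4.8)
The plan is to invoke Lemma \ref{lemrix}: writing
\[
x=\sum_{k+k'=1-a_{ij}}(-1)^k\beta(i,j)^{-k}\theta_i^{(k)}\theta_j\theta_i^{(k')}\in\ff,
\]
it suffices to prove that the images under ${}_lr$ of $x$ vanish for all $l\in I$; I will in fact show ${}_lr(x)=0$ already in $\ff$, which then descends to $\mrk f_\beta$. Since $x$ is homogeneous of degree $(1-a_{ij})i+j$ and involves only $\theta_i,\theta_j$, the derivation ${}_lr$ annihilates $x$ for $l\notin\{i,j\}$, so only $l=i$ and $l=j$ require work. Set $n=1-a_{ij}$. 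Throughout I would compute with the twisted Leibniz rule for ${}_lr$, the free-algebra identity $\theta_i^{(k)}\theta_i^{(k')}=\begin{bmatrix}k+k'\\k\end{bmatrix}_{v_i}\theta_i^{(k+k')}$, the Cartan-datum relation $i\cdot j=d_ia_{ij}$, and Lemma \ref{lemirtheta}, which gives ${}_ir(\theta_i^{(m)})=v_i^{-(m-1)}\theta_i^{(m-1)}$.

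For $l=j$: since ${}_jr(\theta_i)=0$, the Leibniz rule collapses ${}_jr(\theta_i^{(k)}\theta_j\theta_i^{(k')})$ to the single contribution in which ${}_jr$ hits the central $\theta_j$. Using $\beta(j,j)=1$ and $\beta(ki,j)=\beta(i,j)^k$, a short computation gives ${}_jr(\theta_i^{(k)}\theta_j\theta_i^{(k')})=v_i^{-ka_{ij}}\beta(i,j)^k\,\theta_i^{(k)}\theta_i^{(k')}$. The factor $\beta(i,j)^k$ cancels against $\beta(i,j)^{-k}$ in $x$, and after merging $\theta_i^{(k)}\theta_i^{(k')}$ into $\begin{bmatrix}n\\k\end{bmatrix}_{v_i}\theta_i^{(n)}$ one is left with the scalar multiple $\theta_i^{(n)}\sum_{k=0}^n(-1)^kv_i^{k(n-1)}\begin{bmatrix}n\\k\end{bmatrix}_{v_i}$. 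This last alternating $v_i$-binomial sum is the classical one, which vanishes for $n\ge 1$; hence ${}_jr(x)=0$.

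For $l=i$: now ${}_ir$ acts nontrivially on both $\theta_i$-powers, producing two families of monomials,
\[
v_i^{-(k-1)}\theta_i^{(k-1)}\theta_j\theta_i^{(k')}\quad\text{and}\quad v_i^{-2k-a_{ij}-(k'-1)}\beta(j,i)\,\theta_i^{(k)}\theta_j\theta_i^{(k'-1)},
\]
the second arising from ${}_ir(\theta_j\theta_i^{(k')})$ and carrying the factor $v^{-i\cdot j}\beta(j,i)$. Reindexing each family so that the surviving monomial is $\theta_i^{(m)}\theta_j\theta_i^{(m')}$ with $m+m'=n-1=-a_{ij}$, I would collect the coefficient and simplify it using property (ii) in the form $\beta(j,i)=\beta(i,j)^{-1}$ together with $\beta(i,i)=1$; the two $\beta$-factors then coincide, and the surviving bracket reduces to $-v_i^{-m}+v_i^{-2m-a_{ij}-m'}$. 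This vanishes precisely because $m+m'=-a_{ij}$ forces the two $v_i$-exponents to agree, so ${}_ir(x)=0$.

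With ${}_lr(x)=0$ established in $\ff$ for every $l$, passing to $\mrk f_\beta$ and applying Lemma \ref{lemrix} yields $x=0$ in $\mrk f_\beta$, which is the asserted relation. The two Leibniz expansions are routine; the main obstacle is the bookkeeping in the case $l=i$, where the cancellation of the two reindexed families hinges simultaneously on the hypotheses $\beta(i,j)\beta(j,i)=1$ and $\beta(i,i)=1$ and on the identity $i\cdot j=d_ia_{ij}$, which is exactly what makes the $\beta$-factors and the powers of $v_i$ align.
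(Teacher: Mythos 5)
Your proposal is correct and follows essentially the same route as the paper: reduce via Lemma \ref{lemrix} to showing ${}_lr(D_{ij})=0$ for all $l$, handle $l=j$ by cancelling the $\beta(i,j)^{\pm k}$ factors and invoking the alternating quantum binomial identity, and handle $l=i$ by reindexing the two Leibniz families and cancelling them using $\beta(j,i)=\beta(i,j)^{-1}$, $\beta(i,i)=1$, and $i\cdot j=d_ia_{ij}$. All of your stated coefficients (including $v_i^{-2k-a_{ij}-(k'-1)}\beta(j,i)$, which collapses to $v_i^{-k}\beta(j,i)$ exactly as in the paper) check out, so there is nothing to fix.
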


\begin{proof}
For simplicity, we write
\begin{equation}\label{Dij}
  D_{ij}:=\sum_{k+k'=1-a_{ij}}(-1)^k\beta(i,j)^{-k}\theta_i^{(k)}\theta_j\theta_i^{(k')}, \quad \forall i\neq j\in I.
  \end{equation}
  By Lemma \ref{lemrix}, we only need to show that ${}_lr(D_{ij})=0$ for any $l\in I$. It is clear that
  \begin{equation}\label{lrDij}
  {}_lr(D_{ij})=0,\quad {\rm if\ } l\not =i, j.
  \end{equation}
  By Lemma \ref{lemirtheta} and the definition of ${}_ir$, we have
  \begin{eqnarray*}
  & &{}_ir(\theta_i^{(k)}\theta_j\theta_i^{(k')})
  ={}_ir(\theta_i^{(k)}\theta_j)\theta_i^{(k')}+v^{-i\cdot (ki+j)}\beta(ki+j,i)v_{i}^{-(k'-1)}\theta_i^{(k)}\theta_j\theta_i^{(k'-1)}\\
  &=&v_{i}^{-(k-1)}\theta_i^{(k-1)}\theta_j\theta_i^{(k')}+v^{-i\cdot (ki+j)}\beta(j,i)v_{i}^{-(k'-1)}\theta_i^{(k)}\theta_j\theta_i^{(k'-1)}\\
  &=&v_{i}^{-(k-1)}\theta_i^{(k-1)}\theta_j\theta_i^{(k')}+v_i^{-k}\beta(j,i)\theta_i^{(k)}\theta_j\theta_i^{(k'-1)}.
  \end{eqnarray*}
  So  ${}_ir(D_{ij})$ is equal to
\begin{eqnarray*}
   \sum_{1\leq k\leq 1-a_{ij}}(-1)^k v_{i}^{-(k-1)}\beta(i,j)^{-k}\theta_i^{(k-1)}\theta_j\theta_i^{(k')}\\
   +\sum_{0\leq k\leq -a_{ij}}(-1)^k v_{i}^{-k}\beta(i,j)^{-(k+1)}\theta_i^{(k)}\theta_j\theta_i^{(k'-1)}\\
   =\sum_{0\leq k\leq -a_{ij}}(-1)^{(k+1)} v_{i}^{-k}\beta(i,j)^{-(k+1)}\theta_i^{(k)}\theta_j\theta_i^{(k'-1)}\\
   +\sum_{0\leq k\leq -a_{ij}}(-1)^k v_{i}^{-k}\beta(i,j)^{-(k+1)}\theta_i^{(k)}\theta_j\theta_i^{(k'-1)}. \nonumber
\end{eqnarray*}
By comparing the exponents of $v_i$ and $\beta(i,j)$, we have
\begin{equation}\label{irDij}
  {}_ir(D_{ij})=0.
\end{equation}
  By Lemma \ref{lemirtheta} and the definition of ${}_jr$ again, we have
   \begin{equation*}
   {}_jr(\theta_i^{(k)}\theta_j\theta_i^{(k')})
   = v^{-kj\cdot i}\beta(ki,j)\theta_i^{(k)}\theta_i^{(k')}.
  \end{equation*}
  So ${}_jr(D_{ij})$ is equal to
  \begin{eqnarray*}
  & &\sum_{k+k'=1-a_{ij}}(-1)^k v^{-kj\cdot i}\theta_i^{(k)}\theta_i^{(k')}\\
  &=&\sum_{k+k'=1-a_{ij}}(-1)^kv_i^{-ka_{ij}}\begin{bmatrix}
     1-a_{ij}\\k
   \end{bmatrix}_{v_i}\theta_i^{(1-a_{ij})}.
  \end{eqnarray*}
By Section 1.3.4 in~\cite{Lusztigbook},
  \begin{eqnarray*}
  & &\sum_{k+k'=1-a_{ij}}(-1)^kv_i^{-ka_{ij}}\begin{bmatrix}
    1-a_{ij}\\k
  \end{bmatrix}_{v_i}=0.
  \end{eqnarray*}
  So we have
  \begin{equation}\label{jrDij}
  {}_jr(D_{ij})=0.
\end{equation}

Proposition follows $\eqref{lrDij}$,$\eqref{irDij}$ and $\eqref{jrDij}$.
\end{proof}
\subsection{Relation to the Lusztig algebra}\label{sec2.2}
Recall the definition of $\beta(-,-)$ from the Section \ref{sec2.1}. We assume that there exits a multiplicative bilinear form 
$\gamma: \mbb N[I]\times \mbb N[I] \rightarrow \mbb F$ satisfying the following property:
\begin{equation}\label{gamma}
 \gamma(i,j)=\gamma(j,i)\beta(j,i),\quad \forall i, j\in I.
\end{equation}

We define a new multiplication $``*"$ on $\mrk f_{\beta}$ as follows.
\[x*y=\gamma(|x|,|y|) xy, \quad \text{for all homogenous elements}\ x, y\in \mrk f_{\beta}.\]

Recall that $\mathbf{f}$ is Lusztig algebra defined in ~\cite[Chapter 1]{Lusztigbook} associated to a Cartan datum $(I, \cdot)$.
Let $\mathbf{f}_{\mbb F}= \mathbf{f}\otimes_{\mbb Q(v)}\mbb F$.
\begin{thm}\label{thm1}
 If there exists $\gamma(-,-)$ satisfying \eqref{gamma},
then the map $\eta:(\mrk f_{\beta}, *)\rightarrow \mathbf{f}_{\mbb F}$ sending $\theta_i$ to $\theta_i$ for all $i\in I$ is an algebra isomorphism.
\end{thm}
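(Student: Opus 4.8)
The plan is to exhibit $\eta$ as a monomial rescaling on the free algebra and to show that this rescaling carries $\mathfrak{J}$ onto the radical of Lusztig's form. First I would note that $\mathbf{f}_{\mathbb F}$ is itself the $\alpha\equiv 1,\ \beta\equiv 1$ specialization of the present construction: then $r$, $r_i$, ${}_ir$ and $(-,-)$ reduce to Lusztig's data, so $\mathbf{f}_{\mathbb F}=\ff/\mathfrak{I}$, where $\mathfrak{I}=\operatorname{rad}(-,-)_1$ is the radical of the undeformed form $(-,-)_1$ on $\ff$ (over $\mathbb F$ this radical is the extension of scalars of Lusztig's, since each graded piece is finite-dimensional). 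Next I would introduce the graded linear automorphism $\Phi$ of $\ff$ with
\[\Phi(\theta_{i_1}\cdots\theta_{i_n})=\Big(\prod_{1\le a<b\le n}\gamma(i_a,i_b)\Big)^{-1}\theta_{i_1}\cdots\theta_{i_n},\]
which is a bijection because $\gamma$ is $\mathbb F^{\times}$-valued. By multiplicativity of $\gamma$ the cross terms between two monomials contribute exactly $\gamma(|x|,|y|)$, so $\Phi(x)\Phi(y)=\Phi(x*y)$ for homogeneous $x,y$; equivalently $\Phi$ is the unique algebra map $(\ff,*)\to(\ff,\cdot)$ fixing every $\theta_i$. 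The theorem therefore reduces to the single claim $\Phi(\mathfrak{J})=\mathfrak{I}$: granting it, $\Phi$ descends to a graded linear isomorphism $\overline{\Phi}\colon\mathfrak{f}_{\beta}\to\mathbf{f}_{\mathbb F}$, which is an algebra map $(\mathfrak{f}_{\beta},*)\to\mathbf{f}_{\mathbb F}$ by $\Phi(x*y)=\Phi(x)\Phi(y)$ and coincides with $\eta$ since it fixes the $\theta_i$.

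The heart of the matter, and the step I expect to be the main obstacle, is the comparison of the two forms through $\Phi$. I would prove by induction on the common degree that
\[(\Phi x,\Phi y)_1=\lambda(|x|)\,(x,y)\qquad\text{for homogeneous }x,y\text{ with }|x|=|y|,\]
for a nonzero scalar $\lambda(\nu)\in\mathbb F^{\times}$ depending only on $\nu$. The base case is immediate: $\Phi$ fixes the $\theta_i$, and $(\theta_i,\theta_j)=\delta_{ij}(1-v_i^{-2})^{-1}$ is the shared value of both forms. For the inductive step I would invoke the adjunction $(y\theta_i,x)=\alpha(|y|,i)(y,r_i(x))(\theta_i,\theta_i)$ and its $\alpha\equiv 1$ analogue, reducing the degree via $r_i$; the deformed Leibniz rule for $r_i$ carries an extra factor $\beta(i,|y|)$ relative to Lusztig's, and the tensor form carries an extra factor $\alpha$, and the content of the induction is that both are absorbed by the $\gamma$-rescaling built into $\Phi$.

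The mechanism is already transparent in degree $\nu=i+j$: on the basis $\{\theta_i\theta_j,\theta_j\theta_i\}$ the Gram matrices of $(\Phi-,\Phi-)_1$ and of $(-,-)$ are proportional precisely when $\gamma(i,j)=\gamma(j,i)\beta(j,i)$ and $\alpha(j,i)/\alpha(i,j)=\beta(j,i)^2$; the first is exactly \eqref{gamma}, and the second follows from properties (i) and (ii). This is where the hypotheses on $\alpha,\beta,\gamma$ are consumed. Once the comparison is in place the conclusion is formal: $\Phi$ is a graded bijection and $\lambda(\nu)\ne 0$, so $x\in\mathfrak{J}=\operatorname{rad}(-,-)$ if and only if $\Phi x\in\mathfrak{I}=\operatorname{rad}(-,-)_1$, that is $\Phi(\mathfrak{J})=\mathfrak{I}$, whence $\eta=\overline{\Phi}$ is a bijective algebra homomorphism.

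As an independent confirmation that $\eta$ is at least a homomorphism, and to exhibit the role of Proposition~\ref{propserrerelation}, I would compute the ordinary Serre combination in $(\mathfrak{f}_{\beta},*)$. Writing $\theta_i^{*(k)}=\gamma(i,i)^{\binom{k}{2}}\theta_i^{(k)}$ and collecting the $\gamma$-factors in $\theta_i^{*(k)}*\theta_j*\theta_i^{*(k')}$, the exponent of $\gamma(i,i)$ equals the $k$-independent constant $\binom{1-a_{ij}}{2}$, while \eqref{gamma} together with property (ii) turns the remaining $\gamma$-dependence into $\beta(i,j)^{-k}$ up to a $k$-independent scalar; hence
\[\sum_{k+k'=1-a_{ij}}(-1)^k\,\theta_i^{*(k)}*\theta_j*\theta_i^{*(k')}=(\text{nonzero scalar})\cdot D_{ij},\]
which vanishes in $\mathfrak{f}_{\beta}$ by \eqref{Dij} and Proposition~\ref{propserrerelation}. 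Thus the ordinary quantum Serre relations hold in $(\mathfrak{f}_{\beta},*)$, confirming that $\eta$ respects the defining relations of $\mathbf{f}_{\mathbb F}$; the genuinely new input, injectivity, is the identity $\Phi(\mathfrak{J})=\mathfrak{I}$, which is why I regard the bilinear-form comparison as the crux.
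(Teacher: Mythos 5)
Your proposal is correct, but it takes a genuinely different route from the paper's own proof. The paper's argument is exactly your closing ``independent confirmation,'' promoted to the whole proof: using \eqref{gamma} it rewrites the deformed Serre elements $D_{ij}$ of Proposition~\ref{propserrerelation} as ordinary quantum Serre elements in the $*$-product, asserts that these are a set of defining relations for $(\mrk f_{\beta},*)$, and matches that presentation against the Serre presentation of $\mathbf{f}_{\mbb F}$. This is three lines, but it leans on two nontrivial presentation facts --- that the radical $\mrk J$ is generated as a two-sided ideal by the $D_{ij}$, and that $\mathbf{f}$ is defined by the Serre relations (\cite[Theorem 33.1.3]{Lusztigbook}) --- the first of which is nowhere proved in the paper. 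Your route via the monomial rescaling $\Phi$ and the radical identification $\Phi(\mrk J)=\mrk I$ needs neither fact: it works directly from the definition of both algebras as quotients by radicals, in the spirit of the twistings in \cite{FL1,CFLW}, and as a byproduct it actually supplies the presentation claim the paper takes for granted. Two points must be made explicit to close your induction, and both check out. First, the intertwining of derivations: writing $r_i^{\,0}$ for the undeformed ($\beta\equiv 1$) operator, on degree $\nu$ one has $r_i^{\,0}\circ\Phi=\gamma(\nu-i,i)^{-1}\,\Phi\circ r_i$, which follows term-by-term on monomials from \eqref{gamma} and property (ii). Second, well-definedness of your scalar: peeling off a final $\theta_i$ and using Lemma 1(a) in its deformed and undeformed forms gives the recursion $\lambda(\nu)=\gamma(\nu-i,i)^{-2}\alpha(\nu-i,i)^{-1}\lambda(\nu-i)$, and its consistency over the various letters $i$ occurring in $\nu$ is equivalent to the symmetry of $\sigma(i,j):=\gamma(i,j)^{2}\alpha(i,j)$, which is precisely your rank-two Gram-matrix condition and holds by (i), (ii) and \eqref{gamma}; one may then take $\lambda(\nu)=\prod_{a<b}\sigma(i_a,i_b)^{-1}$ for $\nu=\sum_a i_a$. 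So your judgment that the hypotheses are consumed in degree two is accurate, and your sketch completes into a full, self-contained proof.
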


\begin{proof}
By using \eqref{gamma} and Proposition \ref{propserrerelation},
the algebra $(\mrk f_{\beta}, *)$ is generated by $\theta_i, i\in I$ and subjects to the  following relations:
 \begin{equation*}
  D'_{ij}:=\sum_{k+k'=1-a_{ij}}(-1)^k\begin{bmatrix}
     1-a_{ij}\\k
   \end{bmatrix}_{v_i}\theta_i^{*k}*\theta_j*\theta_i^{*k'}=0.
  \end{equation*}

Thus,
\begin{equation*}
  \eta(D'_{ij})=\sum_{k+k'=1-a_{ij}}(-1)^k\begin{bmatrix}
     1-a_{ij}\\k
   \end{bmatrix}_{v_i}\theta_i^{k}\theta_j\theta_i^{k'}=0.
  \end{equation*}
This finishes the proof.
\end{proof}
\begin{rem}\label{rem}
   In general, the choice of $\gamma(-,-)$ satisfying \eqref{gamma} is not unique if it exists.
    Therefore the twist in Theorem \ref{thm1} is not unique.
 \end{rem}

\subsection{Examples}\label{sec2.3}
In this subsection, we shall give some examples of $\mrk f_{\beta}$.
Although the results for Examples \ref{extwoparameter} and \ref{exsuper} have been found in \cite{FL1} and \cite{CFLW}, respectively,
by Remark \ref{rem}, the twists in  Examples \ref{extwoparameter} and \ref{exsuper} are not unique.
We shall provide a new twist for two-parameter quantum algebras (resp. quantum superalgebras).
To the best of our knowledge, results for Examples \ref{exmul}, \ref{exmulsuper1} and \ref{exmulsuper2} are new in literatures.
\begin{ex}\label{extwoparameter}
We briefly review the two-parameter quantum algebra $\mathfrak f_{v,t}$ defined in \cite{FL1}, where $t$ is the second indeterminate.

For a fixed Cartan datum $(I, \cdot)$, let $\Omega=(\Omega_{ij})_{i,j\in I}$ be an integer matrix satisfying that
\begin{itemize}
  \item[(a)] $\Omega_{ii} \in \mathbb{Z}_{>0}$, $\Omega_{ij}\in \mathbb{Z}_{\leq 0}$ for all $i\neq j \in I$;
  \item[(b)] $\frac{\Omega_{ij}+\Omega_{ji}}{\Omega_{ii}}\in \mathbb{Z}_{\leq 0}$ for all $i\neq j \in I$;
  \item[(c)] the greatest common divisor of all $\Omega_{ii}$ is equal to 1;
  \item[(d)] $i\cdot j=\Omega_{ij} +\Omega_{ji},  \quad \forall i, j\in I.$
\end{itemize}

The two-parameter quantum algebra
$\mathfrak f_{v,t}$ associated to $\Omega$ is the associative
$\mbb Q(v,t)$-algebra generated by $\theta_i, \forall i\in I$ and subjects to the following relations.
\begin{equation*}\label{eqserre}
      \sum_{k+k'=1-a_{ij}}(-1)^k\begin{bmatrix}1-a_{ij}\\k\end{bmatrix}_{v_i}
      t^{k(\Omega_{ij}-\Omega_{ji})}\theta_i^{k}\theta_j\theta_i^{k'}=0, \quad \forall i\neq j\in I.
    \end{equation*}

We set $\mathbb{F}=\mathbb{Q}(v,t)$ and
\begin{equation}\label{eqtwoparameter}
\beta(i,j)=t^{\Omega_{ji}-\Omega_{ij}},\quad\alpha(i,j)=t^{2(2\delta_{ij}\Omega_{ii}-\Omega_{ij})},\quad\forall i,j\in I.
\end{equation}
We further set $\gamma(i,j)=t^{\Omega_{ij}-2\delta_{ij}\Omega_{ii}}, \ \forall i,j\in I$. Under these settings, 
Properties (i), (ii) in Section \ref{sec2.1} and \eqref{gamma} hold, and the construction in Section \ref{sec2.1} 
turns into the construction in ~\cite[Section 3]{FL1}. Under the specialization \eqref{eqtwoparameter}, 
$\mrk f_{\beta}$ is exactly the algebra $\mathfrak f_{v,t}$. By Theorem \ref{thm1}, $(\mathfrak f_{v,t}, \ast)$ is isomorphic to 
$\mathbf{f} \otimes_{\mbb Q(v)}\mbb Q(v,t)$, where $x \ast y=\gamma(|x|,|y|)xy$.
This result has been found in \cite{FL1}. We note if we set
$$\gamma(i,j)=v^{i\cdot j}t^{\Omega_{ij}},\quad \forall i,j \in I,$$
the result in Theorem \ref{thm1} still holds.
\end{ex}

\begin{ex}\label{exsuper}
Recall the definition of the quantum superalgebra $\mathfrak{f}_{v,\mbf i}$ from \cite{CHW1}. 
We assume that $(I, \cdot)$ is a bar-consistent Cartan super datum. By bar-consistent, we mean that $d_i$ is odd if and only if $i$ is odd.
Let $\mathcal{P}: I\rightarrow \mbb Z_2$ be the parity function such that
$\mathcal{P}(i)=1$ if $i$ is odd and 0 otherwise.
We set $t=\mbf i$, the complex number such that $\mbf i^2=-1$, and $q=v^{-1}t$.

The quantum superalgebra $\mathfrak{f}_{v,\mbf i}$ associated to $(I,\cdot)$ is the associative $\mbb Q(v, \mathbf{i})$-algebra
generated by $\theta_i, \forall i\in I$ and subjects to the following relations.
\begin{equation*}\label{eqserresuper}
  \sum_{k+k'=1-a_{ij}}(-1)^k\begin{bmatrix}1-a_{ij}\\k\end{bmatrix}_{v_i}
  t^{k i\cdot j+2k\mathcal{P}(i)\mathcal{P}(j)}\theta_i^{k}\theta_j\theta_i^{k'}=0,\quad \forall i\neq j\in I.
\end{equation*}

We set $\mathbb{F}=\mathbb{Q}(v,\mathbf{i})$ and
$$\beta(i,j)=t^{i\cdot j}t^{2\mathcal{P}(i)\mathcal{P}(j)},\quad \alpha(i,j)=1,\quad \forall i, j\in I.$$
In particular, $\beta(i,i)=1$. We further fix an order $``<"$ on $I$ and set
 \[ \gamma(i, j)=\left\{\begin{array}{ll}
   t^{i\cdot j}& {\rm if}\ j<i,\\
   t^{d_i}&{\rm if}\ j=i,\\
   t^{2\mathcal{P}(i)\mathcal{P}(j)}& {\rm if}\ j>i.
 \end{array}
 \right.
 \]
 Under these settings, Properties (i), (ii) in Section \ref{sec2.1} and \eqref{gamma} hold, 
 and the construction in Section \ref{sec2.1} turns into the construction in \cite{CHW1}.
Moreover, we have
$$\mrk f_{\beta}=\mathfrak{f}_{v,\mbf i}.$$
 By Theorem \ref{thm1}, $(\mathfrak{f}_{v,\mbf i}, \ast)$ is isomorphic to $\mathbf{f} \otimes_{\mbb Q(v)}\mbb Q(v,\mathbf{i})$, 
 where $x \ast y=\gamma(|x|,|y|)xy$. This result has been found in \cite{CFLW}. If we set
\[ \gamma(i, j)=\left\{\begin{array}{ll}
   t^{2i\cdot j}& {\rm if}\ j<i,\\
   t^{d_i}&{\rm if}\ j=i,\\
   t^{i\cdot j}t^{2\mathcal{P}(i)\mathcal{P}(j)}& {\rm if}\ j>i,
 \end{array}
 \right.
 \]
 the result in Theorem \ref{thm1} still holds.
\end{ex}

\begin{ex}\label{exmul}
Recall the definition of the multi-parameter quantum algebra $\mrk f_{v,\mathbf{q}}$ from \cite{HPR}.
For a fixed Cartan datum $(I, \cdot)$, let $A=(a_{ij})_{i,j\in I}$ be an associated generalized Cartan matrix and $q_{ij},
\ \forall i,j\in I,$ be indeterminates over $\mathbb{Q}$ such that
$q_{ij}q_{ji}=q_{ii}^{a_{ij}}.$ We set $\mathbf{q}=(q_{ij})_{i,j\in I}$ and further assume that
\begin{equation}\label{qii}
q_{ii}=v_i^{-2},\quad \forall i \in I.
\end{equation}

The multi-parameter quantum algebra $\mrk f_{v,\mathbf{q}}$ associated to $(I, \cdot)$ is the associative  
$\mathbb{Q}(v,q_{ij}^{1/2})$-algebra generated by $\theta_i, \forall i\in I$ and subjects to the following relations.
\begin{equation*}\label{eqserremulti}
  \sum_{k+k'=1-a_{ij}}(-1)^k\begin{bmatrix}1-a_{ij}\\k\end{bmatrix}_{v_i}
  v^{-k i\cdot j}q_{ij}^{-k}\theta_i^{k}\theta_j\theta_i^{k'}=0,\quad \forall i\neq j\in I.
\end{equation*}

We set $\mathbb{F}=\mathbb{Q}(v,q_{ij}^{1/2})$ and
\begin{equation}\label{eqmul}
\beta(i,j)=v^{i\cdot j}q_{ij},\quad\alpha(i,j)=q_{ij},\quad\gamma(i,j)=q_{ii}^{\delta_{ij}}q_{ji}^{1/2},\quad\forall i, j\in I.
\end{equation}
In particular, $\beta(i,i)=1$. Under the settings in \eqref{eqmul},$$\mrk f_{\beta}=\mrk f_{v,\mathbf{q}}.$$
By Theorem \ref{thm1}, $(\mrk f_{v,\mathbf{q}}, \ast)$ is isomorphic to $\mathbf{f} \otimes_{\mbb Q(v)}\mathbb{Q}(v,q_{ij}^{1/2})$ 
under the assumption \eqref{qii}, where $x \ast y=\gamma(|x|,|y|)xy$.
\end{ex}

\begin{ex}\label{exmulsuper1}
Recall the definition of the multi-parameter quantum superalgebra $\mrk f_{\mathbf{s},\mathbf{p}}$ from \cite[Section 2]{KKO13}. 
For a fixed  bar-consistent super Cartan datum $(I, \cdot)$, let $\mathbf{s}:=\{s_{ij}\}_{i,j\in I}$ and 
$\mathbf{p}:=(\{p_{ij}\}_{i,j\in I}, \{p_i\}_{i\in I})$ be families of invertible elements of a commutative
ring $\mathbb{Q}(p_i^{1/(2d_i)},s_{ij},p_{ij})$ such that
\begin{equation}\label{con}
p_{ij}^2=p_i^{2a_{ij}},\quad (p_{ij}p_{ji})/(s_{ij}s_{ji})=p_i^{2a_{ij}},\quad {\rm and}\quad p_{ii}/s_{ii}=p_i^2,\ \forall i, j\in I.
\end{equation}

The condition \eqref{con} implies
\begin{equation}\label{conapply}
p_i^{2a_{ij}}=p_j^{2a_{ji}},\quad (s_{ij}s_{ji})^2=1,\quad {\rm and}\quad s_{ij}s_{ji}=p_{ij}^{-1}p_{ji}.
\end{equation}

We further assume that
\begin{equation}\label{pi}
p_i=v_ih_i\quad {\rm such\  that}\quad h_i^2=1,\quad \forall i\in I.
\end{equation}
The multi-parameter quantum superalgebra $\mrk f_{\mathbf{s},\mathbf{p}}$
associated to $(I,\cdot)$ is the associative $\mbb Q(v^{1/2},s_{ij},p_{ij})$-algebra generated by $\theta_i, \forall i\in I$ 
and subjects to the following relations.
\begin{equation*}\label{eqserremultisuperI}
  \sum_{k+k'=1-a_{ij}}(-1)^k\begin{bmatrix}1-a_{ij}\\k\end{bmatrix}_{v_i}
  s_{ji}^{k}p_{ji}^{-k}v_i^{ka_{ij}}\theta_i^{k}\theta_j\theta_i^{k'}=0,\quad \forall i\neq j\in I.
\end{equation*}

We set $\mathbb{F}=\mathbb{Q}(v^{1/2},s_{ij},p_{ij})$ and
\begin{equation}\label{eqmulsuper1}
\beta(i,j)=s_{ji}^{-1}p_{ji}v_i^{-a_{ij}},\quad\alpha(i,j)=s_{ij}p_{ij}^{-1},\quad\forall i, j\in I.
\end{equation}

In particular, $\beta(i,i)=1$. We further fix an order $``<"$ on $I$ and set
 \[ \gamma(i, j)=\left\{\begin{array}{ll}
   s_{ij}^{-1}h_{i}^{a_{ij}}s_{ji}& {\rm if}\ j<i,\\
   s_{ii}&{\rm if}\ j=i,\\
   p_{ij}p_j^{-a_{ji}}s_{ji}^{-1}& {\rm if}\ j>i.
 \end{array}
 \right.
 \]
 Under the settings in \eqref{eqmulsuper1}, $$\mrk f_{\beta}=\mrk f_{\mathbf{s},\mathbf{p}}.$$
 By Theorem \ref{thm1}, $(\mrk f_{\mathbf{s},\mathbf{p}}, \ast)$ is isomorphic to 
 $\mathbf{f} \otimes_{\mbb Q(v)}\mbb Q(v^{1/2},s_{ij},q_{ij})$ under the assumption \eqref{pi}, where $x \ast y=\gamma(|x|,|y|)xy$.
 \end{ex}

\begin{ex}\label{exmulsuper2}
Recall the definition of the multi-parameter quantum superalgebra 
$\mrk f_{\tilde{\mathbf{s}},\tilde{\mathbf{p}}}$ from \cite[Section 3]{KKO13}. 
For a fixed bar-consistent super Cartan datum $(I, \cdot)$, let $\tilde{\mathbf{s}}:=\{\tilde{s}_{ij}\}_{i,j\in I}$
 and $\tilde{\mathbf{p}}:=\{\tilde{p}_i\}_{i\in I}$ be families of invertible elements of a commutative ring 
 $\mathbb{Q}(\tilde{p}_i^{1/(2d_i)},\tilde{s}_{ij})$ such that
$$\tilde{s}_{ij}\tilde{s}_{ji}=\tilde{p}_i^{-a_{ij}},\quad \ {\rm and}\ \quad \tilde{s}_{ii}=\tilde{p}_i^{-1},\ \forall i, j\in I.$$
We further assume that
\begin{equation}\label{tildepi}
\tilde{p}_i=v_i^2,\quad \forall i\in I.
\end{equation}
The multi-parameter quantum superalgebra $\mrk f_{\tilde{\mathbf{s}},\tilde{\mathbf{p}}}$
associated to $(I,\cdot)$ is the associative $\mbb Q(v,\tilde{s}_{ij})$-algebra generated by $\theta_i, \forall i\in I$ 
and subjects to the following relations.
\begin{equation*}\label{eqserremultisuperII}
  \sum_{k+k'=1-a_{ij}}(-1)^k\begin{bmatrix}1-a_{ij}\\k\end{bmatrix}_{v_i}
  (\tilde{s}_{ij}\tilde{p}_i^{a_{ij}/2})^{-k}\theta_i^{k}\theta_j\theta_i^{k'}=0,\quad \forall i\neq j\in I.
\end{equation*}

We set $\mathbb{F}=\mathbb{Q}(v,\tilde{s}_{ij})$ and
\begin{equation}\label{eqmulsuper2}
\beta(i,j)=\tilde{s}_{ij}(\tilde{p}_i)^{a_{ij}/2},\quad  \alpha(i,j)=\tilde{s}_{ij},\quad\forall i, j\in I.
\end{equation}
In particular, $\beta(i,i)=1$. We further fix an order $``<"$ on $I$ and set
 \[ \gamma(i, j)=\left\{\begin{array}{ll}
   \tilde{s}_{ji}& {\rm if}\ j<i,\\
   \tilde{s}_{ii}^3&{\rm if}\ j=i,\\
   \tilde{p}_i^{-a_{ij}/2}& {\rm if}\ j>i.
 \end{array}
 \right.
 \]
 Under the settings in \eqref{eqmulsuper2}, $$\mrk f_{\beta}=\mrk f_{\tilde{\mathbf{s}},\tilde{\mathbf{p}}}.$$
 By Theorem \ref{thm1}, $(\mrk f_{\tilde{\mathbf{s}},\tilde{\mathbf{p}}}, \ast)$ is isomorphic to 
 $\mathbf{f} \otimes_{\mbb Q(v)}\mbb Q(v,\tilde{s}_{ij})$ under the assumption \eqref{tildepi}, 
 where $x \ast y=\gamma(|x|,|y|)xy$.

\end{ex}

\section{Drinfeld double of $\mrk f_{\beta}$}\label{sec3}

\subsection{}
We review briefly the Drinfeld double construction in ~\cite{Xiao1}.
For a Hopf algebra $A$, we shall denote by $\Delta_A, \varepsilon_A$
and $S_A$ its comultiplication, counit and antipode, respectively.

\begin{defn}
  \label{def1}~\cite[Section 2.2]{Xiao1}
  Given $\mbb F$-Hopf algebras $A$ and $B$, a skew-Hopf pairing of $A$ and $B$ is a bilinear form
  $\psi: A\times B\rightarrow \mbb F$ satisfying
   \begin{eqnarray*}\label{eq116}
\begin{split}
 (a)\quad & \psi(1, y)=\varepsilon_B(y),\quad \psi(x,1)=\varepsilon_A(x);\\
  (b)\quad & \psi(x, y'y'')=\psi(\Delta_A(x),y'\otimes y'');\\
  (c)\quad & \psi(x'x'', y)=\psi(x'\otimes x'',\Delta_B^{\rm op}(y));\\
  (d) \quad & \psi(S_A(x),y)=\psi(x, S_B^{-1}(y)),\quad \forall x, x',x''\in A,\; y,y',y''\in B,
\end{split}
\end{eqnarray*}
where $\psi(x'\otimes x'', y'\otimes y'')=\psi(x', y')\psi(x'',y'')$ and
 $\Delta_B^{\rm op}(y)=\sum y_2\otimes y_1$ if $\Delta_B(y)=\sum y_1\otimes y_2$.
\end{defn}
\begin{prop}
  \label{propdrinfelddouble}~\cite[Proposition 2.4]{Xiao1}
  Let $\psi: A\times B\rightarrow \mbb F$ be a skew-Hopf pairing. Then $A\otimes B$ has a Hopf algebra structure.
  Moreover, for any $x,x'\in A,\ y, y'\in B$, the algebra structure on $A\otimes B$ is defined by
  \begin{eqnarray*}
   \label{eq117}
\begin{split}
  {\rm (i)}\quad & (x\otimes 1)(x'\otimes 1)=xx'\otimes 1;\\
  {\rm (ii)}\quad & (1\otimes y)(1\otimes y')=1\otimes yy';\\
  {\rm (iii)}\quad & (x\otimes 1)(1\otimes y)=x\otimes y;\\
  {\rm (iv)} \quad & \textstyle (1\otimes y)(x\otimes 1)=\sum\psi(x_1, S_B(y_1))x_2\otimes y_2 \psi(x_3, y_3),
\end{split}
  \end{eqnarray*}
  where $\Delta_A^2(x)=\sum x_1\otimes x_2\otimes x_3$ and
  $\Delta_B^2(y)=\sum y_1\otimes y_2\otimes y_3$.

And the comultiplication $\Delta_{A\otimes B}$, counit $\varepsilon_{A\otimes B}$ and 
antipode $S_{A\otimes B}$ of $A\otimes B$ are given as follows.
\begin{equation*}
\begin{split}
&\Delta_{A\otimes B}(x\otimes y)=\textstyle \sum(x_1\otimes y_1)\otimes (x_2\otimes y_2),\\
&\varepsilon_{A\otimes B}(x\otimes y)=\varepsilon_A(x)\varepsilon_B(y),\\
&S_{A\otimes B}(x\otimes y)=(1\otimes S_B(y))(S_A(x)\otimes 1),\quad \forall x\in A,\; y\in B,
\end{split}
\end{equation*}
where $\Delta_A(x)=\sum x_1\otimes x_2$ and $\Delta_B(y)=\sum y_1\otimes y_2$.
\end{prop}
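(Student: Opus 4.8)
The plan is to recognize this as the standard Drinfeld double attached to a skew-Hopf pairing and to verify the Hopf algebra axioms in four stages: associativity of the product on $A\otimes B$, the tensor coalgebra structure, bialgebra compatibility, and the antipode. Since the statement is quoted as \cite[Proposition 2.4]{Xiao1}, in the paper one may simply cite it; what follows is the honest route.

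First I would check that (i)--(iv) extend bilinearly to a well-defined associative product with unit $1\otimes 1$. Relations (i)--(iii) identify $A\cong A\otimes 1$ and $B\cong 1\otimes B$ as subalgebras and show $x\otimes y=(x\otimes 1)(1\otimes y)$, so the only new datum is the straightening rule (iv), which moves a factor of $B$ to the right of a factor of $A$. Associativity then reduces to two mixed identities, $[(1\otimes y)(x\otimes 1)](x'\otimes 1)=(1\otimes y)[(x\otimes 1)(x'\otimes 1)]$ and $[(1\otimes y)(1\otimes y')](x\otimes 1)=(1\otimes y)[(1\otimes y')(x\otimes 1)]$. Expanding each side in Sweedler notation, the first matches after invoking pairing axiom (c) (compatibility with products in the $A$-argument), the second after axiom (b) (compatibility with products in the $B$-argument), together with coassociativity of $\Delta_A,\Delta_B$ and the counit axiom (a) for the boundary terms.

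Next I would install the coalgebra structure $\Delta_{A\otimes B}(x\otimes y)=\sum(x_1\otimes y_1)\otimes(x_2\otimes y_2)$ and $\varepsilon_{A\otimes B}=\varepsilon_A\otimes\varepsilon_B$; coassociativity and the counit axiom are inherited directly from $A$ and $B$. The substantive point is bialgebra compatibility, namely that $\Delta_{A\otimes B}$ and $\varepsilon_{A\otimes B}$ are algebra homomorphisms. By multiplicativity in each tensor slot it suffices to test $\Delta_{A\otimes B}$ against the four products (i)--(iv); the only delicate case is (iv), where one applies $\Delta_{A\otimes B}$ to $\sum\psi(x_1,S_B(y_1))\,x_2\otimes y_2\,\psi(x_3,y_3)$ and compares with $\Delta_{A\otimes B}(1\otimes y)\cdot\Delta_{A\otimes B}(x\otimes 1)$, again using axioms (a)--(c) and the anti-multiplicativity of $S_B$.

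Finally I would verify the antipode. The stated formula $S_{A\otimes B}(x\otimes y)=(1\otimes S_B(y))(S_A(x)\otimes 1)$ is itself a product in the double, so (iv) renders it explicitly; one then checks $m\circ(S_{A\otimes B}\otimes\mathrm{id})\circ\Delta_{A\otimes B}=\eta\circ\varepsilon_{A\otimes B}=m\circ(\mathrm{id}\otimes S_{A\otimes B})\circ\Delta_{A\otimes B}$, using the antipode axioms of $A$ and $B$ and the pairing axiom (d) relating $S_A$ to $S_B^{-1}$. I expect the main obstacle throughout to be the associativity of the straightening rule (iv) and the compatibility of $\Delta_{A\otimes B}$ with it: these are exactly where the pairing axioms (b)--(d) are genuinely needed, and where tracking the three-fold Sweedler components of $\Delta_A^2$ and $\Delta_B^2$ is bookkeeping-intensive.
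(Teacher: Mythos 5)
The paper offers no proof of this proposition at all: it is imported verbatim, proof included, from the cited reference (Xiao, Proposition 2.4), so there is no internal argument to compare yours against line by line. Your outline agrees with the standard verification found in that source, and --- as you yourself note --- simply citing it is what the paper actually does. On the substance, your plan is sound and correctly locates where each pairing axiom does work: the product of arbitrary elements is forced by (i)--(iii) and the straightening rule (iv) to be the global formula $(x\otimes y)(x'\otimes y')=\sum\psi(x'_1,S_B(y_1))\,\psi(x'_3,y_3)\,xx'_2\otimes y_2y'$, which is automatically well defined on the vector space $A\otimes B$; associativity then reduces to your two mixed identities, with axiom (c) handling products in the $A$-argument and axiom (b), together with anti-multiplicativity of $S_B$ and coassociativity, handling products in the $B$-argument; the coalgebra structure is the tensor one; and axiom (d) with invertibility of $S_B$ enters exactly once, in the antipode check. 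The only caveat is that your proposal is a plan rather than a completed proof: the Sweedler bookkeeping for the two associativity identities, for $\Delta_{A\otimes B}$ applied to (iv), and for the antipode axiom is where all the actual computation lives, and none of it is executed. Since these computations are routine (if lengthy) and your outline contains no false step, this is an acceptable level of detail for a result the paper itself delegates to the literature.
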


\subsection{The Hopf algebras  $\widehat{{}'\mathfrak{f}^+}$ and $\widehat{{}'\mathfrak{f}^-}$}

Let $\mathfrak{H}$ (resp. $\mathfrak{H}'$) be the commutative $\mbb F$-algebra generated by 
$ K_i^{\pm 1}$, $J_i^{\pm 1}$ (resp. $ K'^{\pm 1}_i, J'^{\pm 1}_i$) for all $i\in I$
and subject to the relations
$K_iK_i^{-1}=1$, $J_iJ_i^{-1}=1$ (resp. $K'_iK'^{-1}_i=1$, $J'_iJ_i'^{-1}=1$).

Let $\xi: \mbb N[I]\times \mbb N[I] \rightarrow \mbb F$
be a symmetric multiplicative bilinear form such that $\xi(i,i)^{1/2}\in \mathbb{F},\ \forall i \in I.$

Recall that $\beta(-,-)$ is a multiplicative bilinear form defined in Section \ref{sec2.1}. We set
$$\langle i,j\rangle=v^{-i\cdot j}\beta(i,j)\xi(j,i),\quad \forall i ,j\in I,$$
which is also a multiplicative bilinear form on $\mbb N[I]\times \mbb N[I]$.

To distinguish, we shall write $\ff^+$ (resp. $\ff^-$) the free $\mbb F$-algebra generated by $E_i$ (resp. $F_i$), for all $i\in I$.
We denote the algebra isomorphism by $\iota^{+}: {}'\!\mathfrak{f}\rightarrow {}'\!\mathfrak{f}^+$ 
with $ \iota^+(\theta_i)=E_i$ and the algebra anti-isomorphism by 
$\iota^{-}: {}'\!\mathfrak{f}\rightarrow {}'\!\mathfrak{f}^-$ with $ \iota^-(\theta_i)=F_i$.

We set $\widehat{{}'\mathfrak{f}^+}={}'\!\mathfrak{f}^+ \rtimes \mathfrak{H}$ and 
$\widehat{{}'\mathfrak{f}^-}={}'\!\mathfrak{f}^- \rtimes \mathfrak{H}'$, 
where the $\mathfrak{H}$-action (resp. $\mathfrak{H}'$-action) on ${}'\!\mathfrak{f}^+$ (resp. ${}'\!\mathfrak{f}^-$) is defined by
\begin{equation*}\label{eq102}
\begin{split}
K_i\cdot E_j:=K_iE_jK_i^{-1}=\langle i, j\rangle E_j,
\quad J_i&\cdot E_j:=J_iE_jJ_i^{-1}=\xi(j,i)E_j,\\
K'_i\cdot F_j:=K_i'F_jK_i'^{-1}=\langle j,i\rangle\xi(i,j)^{-1} F_j,
\ J'_i&\cdot F_j:=J'_iF_jJ_i'^{-1}=F_j, \forall i, j\in I.
\end{split}
\end{equation*}

 Moreover, the graded structure on $'\mathfrak{f}^+$ (resp. $'\mathfrak{f}^-$) can be extended to 
 $\widehat{'\mathfrak{f}^+}$ (resp. $\widehat{'\mathfrak{f}^-}$) by setting $|K_i|=|K'_i|=0$ and $|J_i|=|J'_i|=0$ for all $i\in I$.

The Hopf algebra structure ($\Delta_+, \varepsilon_+, S_+$) (resp.  ($\Delta_-, \varepsilon_-, S_-$)) 
on $\widehat{{}'\mathfrak{f}^+}$ (resp. $\widehat{{}'\mathfrak{f}^-}$) is  given as follows.
\allowdisplaybreaks
  \begin{eqnarray*}
  & \Delta_+(E_i)=E_i\otimes J_i+ K_i\otimes E_i,\quad &\Delta_+(K_i^{\pm 1})= K_i^{\pm 1} \otimes K_i^{\pm 1},\vspace{6pt}\\
  & \Delta_-(F_i)=J'_i \otimes F_i+ F_i\otimes K'_i, \quad & \Delta_-(K'^{\pm 1}_i)= K'^{\pm 1}_i \otimes K'^{\pm 1}_i, \vspace{6pt}\\
  &\Delta_+(J_i^{\pm 1})=J_i^{\pm 1}\otimes J_i^{\pm 1}, \quad & \Delta_-(J'^{\pm 1}_i)=J'^{\pm 1}_i\otimes J'^{\pm 1}_i,\\
  & \varepsilon_+(K_i^{\pm 1})=\varepsilon_+(J_i^{\pm 1})=1,\quad & \varepsilon_+(E_i)=0,\vspace{6pt}\\
  &\varepsilon_-(K'^{\pm 1}_i)=\varepsilon_-(J'^{\pm 1}_i)=1,\quad &\varepsilon_-(F_i)=0,\\
  & S_+(E_i)=-K_i^{-1}E_iJ_i^{-1},\quad & S_+(K_i^{\pm 1})=K_i^{\mp 1},\quad S_+(J_i^{\pm 1})=J_i^{\mp 1},\vspace{6pt}\\
  & S_-(F_i)=-J_i'^{-1}F_iK'^{-1}_i,\quad & S_-(K'^{\pm 1}_i)=K'^{\mp 1}_i,\quad S_-(J'^{\pm 1}_i)=J_i'^{^{\mp 1}}.
  \end{eqnarray*}

\subsection{Skew Hopf pairing}

For any $\nu\in \mbb Z[I]$ and $j\in I$, we define $\mbb F $-linear maps ${}_j\mcal S$, $\mcal J_{\nu}$ and 
${\mcal K}_{\nu}$ from $\wh{{}'\mrk{f}^-}$ to itself by
\allowdisplaybreaks
\begin{equation}\label{js}
  \begin{split}
&{\mcal K}_{\nu}(K'_{\tau_1}xJ'_{\tau_2})
=\langle \nu,\tau_1\rangle\langle \nu,|x|\rangle
K'_{\tau_1}xJ'_{\tau_2},\\
&\mcal J_{\nu}(K'_{\tau_1}xJ'_{\tau_2})=\xi(\tau_1,\nu)\xi(|x|,\nu)K'_{\tau_1}xJ'_{\tau_2},\quad
\forall \tau_1,\tau_2\in \mbb Z[I], x\in {}'\!\mrk f^-,\\
&{}_j\mcal S(K'_{\tau_1}J'_{\tau_2})=0,\quad {}_j\mcal S(F_i)=\delta_{ij}\xi(i,j),\quad {\rm and}\\
&{}_j\mcal S(xy)={}_j\mcal S(x)\mcal J_j(y)+{\mcal K}_j(x){}_j\mcal S(y).
  \end{split}
\end{equation}
It is clear that ${\mcal K}_{\nu}$ and $\mcal J_{\nu}$ are well defined.
We now check that ${}_j\mcal S$ is also well defined.
The commutative relations on $\mathfrak{H}'$ are trivial.
By the definition of ${}_l\mcal S$, for any $i, j, l\in I$, we have
\begin{equation*}
  \begin{split}
    &{}_l\mcal S(K_i'F_j)=\delta_{jl}\langle j,i\rangle \xi(j,l)K'_i=\langle j,i\rangle\xi(i,j)^{-1}\; {}_l\mcal S(F_j K_i'),\\
    & {}_l\mcal S(J_i'F_j)=\delta_{jl}\xi(j,l)J'_i=\,{}_l\mcal S(F_j J_i').
  \end{split}
\end{equation*}
This shows ${}_l\mcal S$ is well defined.
We note that $|{}_l\mcal S(x)|=|x|-l$ if $|x|_l\neq 0$.
\begin{lem}\label{lemrho^+}
The assignment $E_j\mapsto (v_j^{-1}-v_j)^{-1}{}_j\mcal S, K_{\nu}\mapsto {\mcal K}_{\nu}$ and $J_{\nu}\mapsto \mcal J_{\nu}$ 
defines an algebra anti-homomorphism
$\rho^+: \wh{{}'\mrk{f}^+}\rightarrow \Hom_{\mbb F}(\wh{{}'\mrk{f}^-}, \wh{{}'\mrk{f}^-})$.
\end{lem}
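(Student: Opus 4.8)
The plan is to verify that the assignment $E_j\mapsto (v_j^{-1}-v_j)^{-1}\,{}_j\mcal S$, $K_\nu\mapsto \mcal K_\nu$, $J_\nu\mapsto \mcal J_\nu$ respects all the defining relations of $\wh{{}'\mrk f^+}$, but with the order of multiplication reversed (since $\rho^+$ is claimed to be an \emph{anti}-homomorphism). Since $\wh{{}'\mrk f^+}={}'\!\mrk f^+\rtimes\mathfrak H$ is a smash product, its relations fall into three families: the commutativity relations inside $\mathfrak H$, the straightening relations $K_iE_jK_i^{-1}=\langle i,j\rangle E_j$ and $J_iE_jJ_i^{-1}=\xi(j,i)E_j$ coming from the $\mathfrak H$-action, and (because ${}'\!\mrk f^+$ is \emph{free}) no relations among the $E_j$ themselves. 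I would check each family in turn, always remembering to reverse products: e.g. to prove $\rho^+$ is an anti-homomorphism I must show $\rho^+(ab)=\rho^+(b)\rho^+(a)$ as endomorphisms of $\wh{{}'\mrk f^-}$, composed in the usual order.

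First I would dispatch the $\mathfrak H$-part. The operators $\mcal K_\nu$ and $\mcal J_\nu$ are both defined by multiplicative scalars depending only on gradings, so they commute with each other and satisfy $\mcal K_\mu\mcal K_\nu=\mcal K_{\mu+\nu}$, $\mcal J_\mu\mcal J_\nu=\mcal J_{\mu+\nu}$; this handles all relations internal to $\mathfrak H$ (order is irrelevant since everything commutes). Next, for the straightening relations I would compute the composites ${}_j\mcal S\,\mcal K_\nu$ and $\mcal K_\nu\,{}_j\mcal S$ on a typical element $K'_{\tau_1}xJ'_{\tau_2}$, using that ${}_j\mcal S$ shifts the grading of $x$ by $-j$ (the remark $|{}_j\mcal S(x)|=|x|-j$). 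The scalar $\mcal K_\nu$ attaches before the shift differs from the scalar after the shift precisely by the factor $\langle \nu,j\rangle$, which should reproduce the relation $K_\nu E_j K_\nu^{-1}=\langle\nu,j\rangle E_j$ after reversing the order; similarly the $\xi(j,\nu)$ discrepancy reproduces the $J_\nu$-relation. This is a bookkeeping computation comparing the scalar picked up by ${}_j\mcal S$ acting before versus after $\mcal K_\nu$ (resp. $\mcal J_\nu$).

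The genuine content, and the step I expect to be the main obstacle, is showing that ${}_j\mcal S$ is well defined as a single linear operator satisfying the twisted Leibniz rule
\[
{}_j\mcal S(xy)={}_j\mcal S(x)\,\mcal J_j(y)+\mcal K_j(x)\,{}_j\mcal S(y),
\]
consistently on all of $\wh{{}'\mrk f^-}$. The excerpt already records the compatibility with the $\mathfrak H'$-relations $K'_iF_j$ versus $F_jK'_i$ and $J'_iF_j$ versus $F_jJ'_i$, so what remains is the associativity check: applying the Leibniz rule to ${}_j\mcal S\big((xy)z\big)$ and to ${}_j\mcal S\big(x(yz)\big)$ must yield the same answer. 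This comes down to the cocycle-type identity guaranteeing that $\mcal K_j$ and $\mcal J_j$ are both algebra homomorphisms of $\wh{{}'\mrk f^-}$ and that the two ways of expanding a triple product agree, which holds because $\mcal K_j,\mcal J_j$ multiply by grading-dependent scalars and hence are automatically multiplicative.

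Once well-definedness is secured, the anti-homomorphism property on the free generators $E_j$ is vacuous (there are no relations among them), so the proof concludes by assembling the three families: the $\mathfrak H$-relations from multiplicativity of $\mcal K_\nu,\mcal J_\nu$, the cross relations from the scalar comparison in the second step, and the absence of relations among the $E_j$. I would present these in that order, keeping the order-reversal convention explicit throughout so that the final statement reads as an algebra \emph{anti}-homomorphism into $\End_{\mbb F}(\wh{{}'\mrk f^-})$ with its standard composition product.
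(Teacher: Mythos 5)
Your proposal is correct and matches the paper's proof in essence: the paper likewise verifies the anti-homomorphism property by computing ${}_j\mcal S\circ\mcal K_i$ versus $\mcal K_i\circ{}_j\mcal S$ (and the $\mcal J_i$ analogue) on a typical element $K'_{\tau_1}xJ'_{\tau_2}$, where the grading shift $|{}_j\mcal S(x)|=|x|-j$ produces exactly the discrepancy factors $\langle i,j\rangle$ and $\xi(j,i)$, and declares the remaining relations straightforward. The only organizational difference is that the well-definedness of ${}_j\mcal S$ (your ``main obstacle'') is established in the paper before the lemma rather than inside its proof, and your resolution of it via multiplicativity of $\mcal K_j,\mcal J_j$ is sound.
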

\begin{proof} For any $i, j\in I$, $\tau_1,\tau_2\in \mbb Z[I]$ and $x\in {}'\mrk{f}^-$, we have
\begin{equation*}
\begin{split}
&{}_j\mcal S ({\mcal K}_i(K'_{\tau_1}xJ'_{\tau_2}))=
  \langle i,\tau_1\rangle \langle i,|x|\rangle
  \langle j,\tau_1\rangle
  K'_{\tau_1}\,{}_j\mcal S(x)J'_{\tau_2},\quad {\rm and}\\
  &{\mcal K}_i({}_j\mcal S(K'_{\tau_1}xJ'_{\tau_2}))=
  \langle i,\tau_1\rangle \langle i,|x|-j\rangle
  \langle j,\tau_1\rangle
  K'_{\tau_1}\,{}_j\mcal S(x)J'_{\tau_2}.
\end{split}
\end{equation*}
This shows that $\rho^+(K_iE_j)=\langle i,j\rangle\rho^+(E_jK_i)$.

Similarly, for any $i, j\in I$, $\tau_1,\tau_2\in \mbb Z[I]$ and $x\in {}'\mrk{f}^-$, we have
\begin{equation*}
\begin{split}
&{}_j\mcal S (\mcal J_i(K'_{\tau_1}xJ'_{\tau_2}))=
  \langle j,\tau_1\rangle  \xi(|x|,i)
  \xi(\tau_1,i)
  K'_{\tau_1}\ {}_j\mcal S(x)J_{\tau_2},\quad {\rm and}\\
  &\mcal J_i(\,{}_j\mcal S(K'_{\tau_1}xJ'_{\tau_2}))=
  \langle j,\tau_1\rangle  \xi(|x|-j,i)
  \xi(\tau_1,i)
  K'_{\tau_1}\,{}_j\mcal S(x)J'_{\tau_2}.
\end{split}
\end{equation*}
This shows that $\rho^+(J_iE_j)=\xi(j,i)\rho^+(E_jJ_i)$.
The other defining relations for $\wh{{}'\mrk{f}^+}$ are straightforward.
\end{proof}

We define a bilinear form $\phi: \wh{{}'\mathfrak{f}^+}\times \wh{{}'\mathfrak{f}^-} \rightarrow \mbb F$ by
\begin{equation}\label{hopfpair}
  \phi(x,y)=\varepsilon_-(\rho^+(x)y),\ \forall x \in \wh{{}'\mathfrak{f}^+}, y \in \wh{{}'\mathfrak{f}^-}.
\end{equation}
The bilinear form $\phi$ is well defined. It is clear that $\phi(x,y)=0$ if $x, y $ are homogenous with different gradings.

Moreover,
for any $\nu_1,\nu_2, \tau_1,\tau_2 \in \mbb Z[I]$ and any $x\in {}'\!\mrk f^+, y\in {}'\!\mrk f^-$, we have
\begin{equation}
  \label{phikxy}
  \begin{split}
  &\phi(K_{\nu_1}xJ_{\nu_2}, K'_{\tau_1}yJ'_{\tau_2})
  =\langle\nu_1,\tau_1\rangle
  \langle \nu_1,|y|\rangle
  \langle |x|, \tau_1\rangle
  \xi(\tau_1,\nu_2)\phi(x,y).
  \end{split}
\end{equation}
This can be shown as follows.
\allowdisplaybreaks
\begin{equation*}
  \begin{split}
  &\phi(K_{\nu_1}xJ_{\nu_2}, K'_{\tau_1}yJ'_{\tau_2})
  =\varepsilon_-(\mcal J_{\nu_2}({}_x\!\mcal S ({\mcal K}_{\nu_1}(K'_{\tau_1}yJ'_{\tau_2}))))\\
  =&\langle\nu_1,\tau_1\rangle
  \langle \nu_1,|y|\rangle
  \langle |x|, \tau_1\rangle
  \varepsilon_-(\mcal J_{\nu_2}(K'_{\tau_1}{}_x\!\mcal S(y)J'_{\tau_2}))\\
  =&\langle\nu_1,\tau_1\rangle
  \langle \nu_1,|y|\rangle
  \langle |x|, \tau_1\rangle
  \xi(\tau_1,\nu_2)\phi(x,y),
  \end{split}
\end{equation*}
where ${}_x\!\mcal S=\rho^+(x)$. It's clear $|{}_x\!\mcal S(y)|=0$.

\begin{prop}\label{prophopfpair}
  The bilinear form $\phi$ defined in \eqref{hopfpair} is a skew Hopf pairing.
\end{prop}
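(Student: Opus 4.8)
The plan is to verify directly that $\phi$ satisfies the four defining conditions (a)--(d) of a skew-Hopf pairing (Definition \ref{def1}), taking $A=\wh{{}'\mrk f^+}$, $B=\wh{{}'\mrk f^-}$, $\Delta_A=\Delta_+$, $\Delta_B=\Delta_-$, and so on. The central tool is the definition $\phi(x,y)=\varepsilon_-(\rho^+(x)y)$ together with the fact that $\rho^+$ is an algebra anti-homomorphism (Lemma \ref{lemrho^+}): this means $\rho^+(x'x'')=\rho^+(x'')\rho^+(x')$, which is precisely the order-reversal that produces the opposite comultiplication $\Delta_B^{\mathrm{op}}$ appearing in condition (c). The formula \eqref{phikxy} governing how $\phi$ transforms under multiplication by the Cartan-type generators $K_\nu, J_\nu, K'_\tau, J'_\tau$ will be used repeatedly to reduce every identity to statements about the generators $E_i, F_j$ and to track the scalar factors built from $\langle-,-\rangle$ and $\xi$.

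First I would dispose of condition (a): since $\varepsilon_-$ is an algebra homomorphism and $\rho^+(1)=\mathrm{id}$, we get $\phi(1,y)=\varepsilon_-(y)$, and $\phi(x,1)=\varepsilon_-(\rho^+(x)\cdot 1)$ unwinds to $\varepsilon_+(x)$ after checking on generators (note $\phi$ vanishes off the diagonal grading, so $\phi(E_i,1)=0=\varepsilon_+(E_i)$, and $\phi(K_\nu J_\mu,1)=1$). For condition (b), $\phi(x,y'y'')=\phi(\Delta_+(x),y'\otimes y'')$, I would reduce to $x$ a generator: for $x=E_i$ the identity $\Delta_+(E_i)=E_i\otimes J_i+K_i\otimes E_i$ must match the twisted Leibniz rule ${}_i\mcal S(y'y'')={}_i\mcal S(y')\mcal J_i(y'')+\mcal K_i(y'){}_i\mcal S(y'')$ built into \eqref{js}, which is exactly how $\mcal K_i$ and $\mcal J_i$ were defined to interact with the coproduct; for the Cartan generators it follows from their group-like coproducts and \eqref{phikxy}. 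Condition (c), $\phi(x'x'',y)=\phi(x'\otimes x'',\Delta_-^{\mathrm{op}}(y))$, is the dual statement and is where the anti-homomorphism property of $\rho^+$ does the essential work, converting the product $\rho^+(x'x'')=\rho^+(x'')\rho^+(x')$ into the reversed tensor factors; here I would again reduce to $y=F_j$ using $\Delta_-(F_i)=J'_i\otimes F_i+F_i\otimes K'_i$.

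Finally, for the antipode compatibility (d), $\phi(S_+(x),y)=\phi(x,S_-^{-1}(y))$, I would either check it on generators using the explicit formulas for $S_+,S_-$ given in the excerpt, or derive it formally from (a)--(c): once $\phi$ is known to be a bialgebra pairing, the antipode condition follows because both sides are forced to agree by the uniqueness of the convolution inverse. I expect the main obstacle to be bookkeeping in condition (b)/(c): one must confirm that the particular scalars $\xi(i,j)$ appearing in ${}_j\mcal S(F_i)=\delta_{ij}\xi(i,j)$ and in the normalization $(v_j^{-1}-v_j)^{-1}$ from Lemma \ref{lemrho^+}, together with the twists in \eqref{phikxy}, reassemble correctly so that the two sides of each Hopf-pairing axiom match with no leftover factor. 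Verifying symmetry/consistency of these scalar factors --- in particular that the $\langle-,-\rangle$ and $\xi$ contributions from the $K/J$ versus $K'/J'$ actions cancel as required --- is the genuinely delicate part; everything else is a reduction to generators followed by a direct appeal to \eqref{js} and \eqref{phikxy}.
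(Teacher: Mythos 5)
Your proposal is correct, and for conditions (a)--(c) it is essentially the paper's own proof: the paper uses \eqref{phikxy} to strip off the Cartan factors, reduces (b) to the operator identity \eqref{rho+}, namely $\rho^+(x)(y'y'')=\sum\rho^+(x_1)(y')\rho^+(x_2)(y'')$, and proves it by induction on $|x|$ with base case $x=E_i$ (the twisted Leibniz rule in \eqref{js}, exactly as you say), the induction step being driven by the anti-homomorphism property $\rho^+(x'x'')=\rho^+(x'')\rho^+(x')$; it then proves (c), i.e.\ \eqref{phix1x2y}, by induction on $|y|$ with base case $y=F_j$, using the already-established (b) in the induction step. One point you should make explicit: the phrase ``reduce to a generator'' conceals these inductions, and the anti-homomorphism property is what extends (b) from $E_i$ to arbitrary products --- it is not only relevant to (c). The genuine divergence is in (d): the paper again reduces via \eqref{phikxy} to \eqref{phisxy} and runs an induction on $|x|$ whose step relies on the identity $\Delta_-^{\rm op}S_-^{-1}=\tau_{12}(S_-^{-1}\otimes S_-^{-1})\Delta_-^{\rm op}$, whereas your second option derives (d) formally from (a)--(c): a skew-Hopf pairing of $\wh{{}'\mrk f^+}$ and $\wh{{}'\mrk f^-}$ is an ordinary Hopf pairing of $\wh{{}'\mrk f^+}$ with the co-opposite of $\wh{{}'\mrk f^-}$, whose antipode is $S_-^{-1}$, so the maps $(x,y)\mapsto\phi(S_+(x),y)$ and $(x,y)\mapsto\phi(x,S_-^{-1}(y))$ are, by (b) and (c) together with the antipode axioms, a left and a right convolution inverse of $\phi$ in $\Hom_{\mbb F}\bigl(\wh{{}'\mrk f^+}\otimes\wh{{}'\mrk f^-},\mbb F\bigr)$ (with the tensor coalgebra structure twisted by the co-opposite coproduct on the second factor), hence they coincide. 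That route is shorter and needs only the invertibility of $S_-$, which is evident from the explicit formulas; what the paper's induction buys instead is a self-contained, generators-and-relations verification that never invokes the general convolution formalism. Either way the statement holds, so your plan would go through.
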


\begin{proof}
  Part (a) in Definition \ref{def1} follows directly from the definition of $\phi$.

  We show (b) in Definition \ref{def1}.
For any $\nu_1, \nu_2, \mu_1, \mu_2, \tau_1, \tau_2\in \mbb Z[I]$ and any $x\in {}'\!\mrk f^+, y',y''\in {}'\!\mrk f^-$,
by \eqref{phikxy}, we have
\begin{equation}
\label{phikxy1y2}
\begin{split}
  &\quad\quad\quad\quad\quad\phi(K_{\nu_1}xJ_{\nu_2},K'_{\mu_1}y'J'_{\mu_2} K'_{\tau_1}y''J'_{\tau_2})=\\
  &\langle |y'|,\tau_1\rangle^{-1}\xi(\tau_1,|y'|)\langle \nu_1 +|x|,\mu_1+\tau_1\rangle \langle \nu_1,|x|\rangle
  \xi(\mu_1+\tau_1,\nu_2)\phi(x, y'y'').
  \end{split}
\end{equation}

By the definition of $\Delta_+$, we may write $\Delta_+(x)=\sum K_{|x_2|}x_1\otimes x_2J_{|x_1|}$ 
with $x_1, x_2 \in {}'\mrk f^+$. By \eqref{phikxy}, we have
\begin{equation}
\label{phikxy1timesy2}
\begin{split}
  &\phi(\Delta_+(K_{\nu_1}xJ_{\nu_2}),K'_{\mu_1}y'J'_{\mu_2}\otimes K'_{\tau_1}y''J'_{\tau_2})=\langle |x_2|,\tau_1\rangle \xi(\tau_1,|x_1|)\\
  &\langle |x|,\mu_1\rangle \langle \nu_1,\mu_1+\tau_1+|x|\rangle
  \xi(\mu_1+\tau_1,\nu_2)\phi(\Delta_+(x), y'\otimes y'').
 \end{split}
\end{equation}
By comparing \eqref{phikxy1y2} and \eqref{phikxy1timesy2} with $|x_1|=|y'|, |x_2|=|y''|$, (b) is reduced to
\begin{equation}
  \label{phixy1y2}
  \phi(x, y'y'')=\phi(\Delta_+(x),y'\otimes y''),
\end{equation}
where $\phi(x'\otimes x'', y'\otimes y'')=\phi(x', y')\phi(x'',y''),$ for all $x, x' ,x''\in {}'\mrk f^+, y',y''\in {}'\mrk f^-$.

Let us simply write $\Delta_+(x)=\sum x_1\otimes x_2$ with $x_1,x_2\in  \wh{{}'\mathfrak{f}^+}$, not necessarily in ${}'\!\mrk f^{+}$.
By the definition of $\phi$ , \eqref{phixy1y2} is equivalent to
\begin{equation}
  \label{rho+}
  \rho^+(x)(y'y'')=\textstyle \sum \rho^+(x_1)(y')\rho^+(x_2)(y'').
\end{equation}
We show \eqref{rho+} by induction on $|x|$.
If $x=E_i$, \eqref{rho+} can be directly shown by the definition of $\phi$.
Assume that \eqref{rho+} holds for $x',x''$.
We write
$\Delta_+(x')=\sum x_1'\otimes x_2'$ and  $\Delta_+(x'')=\sum x_1''\otimes x_2''$.
Then $\Delta_+(x'x'')=\sum  x_1'x_1''\otimes x_2'x_2''$ and
\begin{equation*}
\begin{split}
  \rho^+(x'x'')(y'y'')
  &=\rho^+(x'')\rho^+(x')(y'y'')\\
 &=\textstyle\sum \rho^+(x''_1)\rho^+(x'_1)(y')\rho^+(x''_2)\rho^+(x'_2)(y'')\\
& =\textstyle \sum \rho^+(x'_1x''_1)(y') \rho^+(x'_2x''_2)(y'').
\end{split}
\end{equation*}
This proves \eqref{rho+} and, therefore, (b).

We show (c) in Definition \ref{def1}.
By using \eqref{phikxy} and a similar argument as we did for (b),
(c) is reduced to
\begin{equation}
  \label{phix1x2y}
 \phi(x'x'', y)=\phi(x'\otimes x'',\Delta_-^{\rm op}(y)),\quad \forall x',x''\in {}'\mrk f^+, y\in {}'\mrk f^-.
\end{equation}
We show \eqref{phix1x2y} by induction on $|y|$.
The case that $y=F_i$ is straightforward.
Assume that \eqref{phix1x2y} holds for $y',y''$ in the second component.
We write
$\Delta^{\rm op}_-(y')=\sum y_1'\otimes y_2'$ and  $\Delta^{\rm op}_-(y'')=\sum y_1''\otimes y_2''$.
By \eqref{phixy1y2}, we have
\begin{equation*}
  \begin{split}
    \phi(x'x'', y'y'')&=\phi(\Delta_+(x'x''),y'\otimes y'')=\textstyle \sum\phi(x_1'x_1'', y')\phi(x_2'x_2'', y'')\\
    &=\textstyle \sum\phi(x_1'\otimes x_1'', \Delta^{\rm op}_-(y'))\phi(x_2'\otimes x_2'',
    \Delta^{\rm op}_-(y''))\\
    &=\textstyle \sum\phi(x_1', y_1')\phi(x_2',y_1'')\phi(x_1'', y_2')\phi(x_2'',y_2'')\\
    &=\textstyle \sum\phi(x',y'_1y''_1)\phi(x'',y'_2y''_2)\\
    &=\textstyle \sum \phi(x'\otimes x'', y'_1y''_1\otimes y'_2y''_2)
    =\phi(x'\otimes x'', \Delta_-^{\rm op}(y'y'')).
  \end{split}
\end{equation*}
This proves (c).

We now show (d) in Definition \ref{def1}.
By definition of $S_+$, for any $x\in {}'\!\mrk f^+$, $S_+'(x)=K_{-|x|}x'J_{-|x|}$ with $x'\in  {}'\!\mrk f^+$ 
and $|x'|=|x|$, where $K_{-|x|}=K_{|x|}^{-1}$ and $J_{-|x|}=J_{|x|}^{-1}$.
Since $S_-'^{-1}(F_i)=-K_i'^{-1}F_iJ_i'^{-1}$, for any $y\in {}'\!\mrk f^-$, we can write 
$S_-'^{-1}(y)=K'_{-|y|}y'J'_{-|y|}$ with $y'\in  {}'\!\mrk f^-$ and $|y'|=|y|$.
By \eqref{phikxy}, for any $\nu_1, \nu_2, \tau_1, \tau_2 \in \mbb Z[I]$ and any $x\in  {}'\!\mrk f^+, y\in {}'\!\mrk f^-$, we have
\begin{equation}
  \label{phiskxy}
  \begin{split}
  & \phi(S_+(K_{\nu_1}xJ_{\nu_2}), K'_{\tau_1}yJ'_{\tau_2})\\
  &=\langle-\nu_1,\tau_1\rangle\langle\nu_1,|x|\rangle\langle-\nu_1,|y|\rangle\xi(|x|,\nu_2)^{-1}\xi(\tau_1,-\nu_2-|x|)\phi(S_+(x), y);\\
  & \phi(K_{\nu_1}xJ_{\nu_2}, S_-'^{-1}(K'_{\tau_1}yJ'_{\tau_2}))\\
  &=\langle |y|,\tau_1\rangle\langle|x|,-\tau_1\rangle\langle\nu_1,-\tau_1\rangle \xi(-\tau_1-|y|,\nu_2)\xi(\tau_1,|y|)^{-1}\phi(x, S_-'^{-1}(y)).
  \end{split}
\end{equation}
By using $|x|=|y|$ and comparing two identities in \eqref{phiskxy}, (d) is reduced to
\begin{equation}\label{phisxy}
  \phi(S_+(x), y)=\phi(x, S_-'^{-1}(y)), \quad \forall x\in  {}'\!\mrk f^+, y\in {}'\!\mrk f ^-.
\end{equation}
We show \eqref{phisxy} by induction on $|x|$.
The case that $x=E_i$ is straightforward.
Assume that \eqref{phisxy} holds for $x', y'$ and $x'',y''$, respectively.
We write $\Delta_-(y')=\sum y'_1\otimes y'_2$ and $\Delta_-(y'')=\sum y''_1\otimes y''_2$.

Recall that $\Delta_-^{\rm op}S_-'^{-1}=\tau_{12}(S_-'^{-1}\otimes S_-'^{-1})\Delta_-^{\rm op}$, 
where $\tau_{12}$ is the operator interchanging the first and second component of the tensors.
By using this identity, we have
\begin{equation*}
  \begin{split}
    \phi(S_+(x'x''), y'y'')&=\phi(S_+(x'')\otimes S_+(x'), \Delta_-^{\rm op}(y'y''))\\
    &=\textstyle \sum \phi(x'', S_-'^{-1}(y_2'y_2''))\phi(x', S_-'^{-1}(y_1'y_1''))\\
   & =\phi(x'\otimes x'', \tau_{12}(S_-'^{-1}\otimes S_-'^{-1})\Delta_-^{\rm op}(y'y''))\\
    &=\phi(x'\otimes x'', \Delta_-^{\rm op}S_-'^{-1}(y'y''))\\
    &=\phi(x'x'',S_-'^{-1}(y'y'')).
  \end{split}
\end{equation*}
This finishes the proof.
\end{proof}

Recall $D_{ij}$ from \eqref{Dij}, denoted by $\widehat{\mrk J^+}$ (resp. $\widehat{\mrk J^-}$) the ideal of  
$\widehat{{}'\mrk f ^+}$ (resp. $\widehat{{}'\mrk f ^-}$) generated by $\iota^+(D_{ij} )$ (resp. $\iota^-(D_{ij} ))$.
Let $$\widehat{\mrk f }^+=\widehat{{}'\mrk f ^+}/\widehat{\mrk J^+}\quad \rm{and}\quad
\widehat{\mrk f }^-=\widehat{{}'\mrk f ^-}/\widehat{\mrk J^-}.$$

In the rest of this subsection, we shall show that $\phi$ induces a well-defined skew-Hopf pairing 
$\widehat{\mrk f }^+\times\widehat{\mrk f }^-\rightarrow \mathbb{F}$.

Similarly, for any $\nu,\tau_1,\tau_2\in \mbb Z[I]$ and any $i,j\in I$, 
we define $\mbb F$-linear maps $\mcal S\!_j$, $\mcal J'_{\nu}$ and ${\mcal K}'_{\nu}$ from $\wh{{}'\mrk{f}^+}$ to itself by
\allowdisplaybreaks
\begin{equation}\label{sj}
  \begin{split}
&{\mcal K}'_{\nu}(K_{\tau_1}xJ_{\tau_2})
=\langle \tau_1,\nu\rangle\langle |x|,\nu\rangle\xi(\nu,|x|)^{-1}\xi(\nu,\tau_2)
K_{\tau_1}xJ_{\tau_2},\\
&\mcal J'_{\nu}(K_{\tau_1}xJ_{\tau_2})=K_{\tau_1}xJ_{\tau_2},\quad
\forall \tau_1,\tau_2\in \mbb Z[I], x\in {}'\mrk f ^+,\\
&\mcal S\!_j(K_{\tau_1}J_{\tau_2})=0,\;\mcal S\!_j(E_i)=\delta_{ij},\ {\rm and}
\ \mcal S\!_j(xy)=\mcal S\!_j(x)y+{\mcal K}'_j(x)\mcal S\!_j(y).
  \end{split}
\end{equation}
A direct calculation shows that these maps are well defined.

Let $\mathcal{G}(i)=\xi(i,i)^{1/2}, \forall i \in I.$ In general, 
there are two different choices for $\mathcal{G}(i)$, we shall choose one such that
\begin{equation}\label{eqg}
\begin{split}
\mathcal{G}(\nu_1)\mathcal{G}(\nu_2)\xi(\nu_2,\nu_1)&=\mathcal{G}(\nu_1+\nu_2),\quad \forall \nu_1,\nu_2\in \mbb N[I].
\end{split}
\end{equation}
Let $\rho^-: \wh{{}'\mrk{f}^-}\rightarrow \Hom_{\mbb F}(\wh{{}'\mrk{f}^+}, \wh{{}'\mrk{f}^+})$ be the
map defined by
\begin{equation*}
F_j\mapsto \frac{\mathcal{G}(j)}{v_j^{-1}-v_j}\mcal S_j,\quad K'_{\nu}\mapsto {\mcal K}'_{\nu}\quad {\rm and}
\quad   J'_{\nu}\mapsto \mcal J'_{\nu}.
\end{equation*}
Then we have the following lemma.
\begin{lem}\label{lemrho^-}
 {\rm (i)} $\rho^-$ is an algebra anti-homomorphism.

 {\rm (ii)} For any $y\in {}'\mrk f ^-$, $x',x''\in {}'\mrk f ^+$,
we have
  $$\rho^-(y)(x'x'')=\textstyle \sum\rho^-(y_2)(x')\rho^-(y_1)(x''),$$
   where $\Delta_-(y)=\sum y_1\otimes y_2$.
\end{lem}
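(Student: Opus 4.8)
The strategy is to mirror exactly the arguments already carried out for $\rho^+$ in Lemma \ref{lemrho^+} and in the proof of Proposition \ref{prophopfpair}, exploiting the symmetry between the $+$ and $-$ sides. For part (i), I would verify that the assignment $F_j\mapsto \frac{\mathcal{G}(j)}{v_j^{-1}-v_j}\mcal S_j$, $K'_\nu\mapsto {\mcal K}'_\nu$, $J'_\nu\mapsto \mcal J'_\nu$ respects each defining relation of $\wh{{}'\mrk f^-}$. Since $\mathfrak{H}'$ is commutative and $\mcal J'_\nu$ acts as the identity, the relations among the $K'$'s and $J'$'s are immediate, and the nontrivial content is the cross relation coming from the $\mathfrak{H}'$-action on ${}'\mrk f^-$, namely $K'_iF_jK_i'^{-1}=\langle j,i\rangle\xi(i,j)^{-1}F_j$ and $J'_iF_jJ_i'^{-1}=F_j$. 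Concretely I would compute $\mcal S_j({\mcal K}'_i(K_{\tau_1}xJ_{\tau_2}))$ and ${\mcal K}'_i(\mcal S_j(K_{\tau_1}xJ_{\tau_2}))$ on a general homogeneous element, using that $\mcal S_j$ lowers the grading by $j$, and compare the resulting scalar factors built from $\langle-,-\rangle$ and $\xi$; the factor defining ${\mcal K}'_\nu$ in \eqref{sj}, with its $\xi(\nu,|x|)^{-1}\xi(\nu,\tau_2)$ correction, is designed precisely so that the two expressions differ by the scalar $\langle j,i\rangle\xi(i,j)^{-1}$ required by anti-homomorphism. This reproduces $\rho^-(K'_iF_j)=\langle j,i\rangle\xi(i,j)^{-1}\rho^-(F_jK'_i)$, and the $J'$ relation is checked the same way.

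For part (ii), the goal is the co-Leibniz identity $\rho^-(y)(x'x'')=\sum\rho^-(y_2)(x')\rho^-(y_1)(x'')$, which is the $-$-side analogue of \eqref{rho+}. I would prove it by induction on $|y|$. The base case $y=F_j$ reduces, after clearing the scalar $\frac{\mathcal{G}(j)}{v_j^{-1}-v_j}$, to the twisted Leibniz rule $\mcal S_j(x'x'')=\mcal S_j(x')x''+{\mcal K}'_j(x')\mcal S_j(x'')$ from the definition \eqref{sj}, matched against $\Delta_-(F_j)=J'_j\otimes F_j+F_j\otimes K'_j$ so that the opposite-order pairing $\sum\rho^-(y_2)\otimes\rho^-(y_1)$ lands the ${\mcal K}'_j$ on the correct tensor factor. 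For the inductive step I would write $y=y'y''$, use $\Delta_-(y'y'')=\sum y_1'y_1''\otimes y_2'y_2''$ together with the antihomomorphism property from part (i) to rewrite $\rho^-(y'y'')=\rho^-(y'')\rho^-(y')$, and then apply the inductive hypothesis to both factors; the computation is formally identical to the one displayed for \eqref{rho+}, run with the components transposed.

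The main obstacle, and the place demanding genuine care rather than rote transcription, is bookkeeping the scalar twists so that everything is consistent. Unlike the $+$ side, the $-$ side carries the extra normalizing constants $\mathcal{G}(i)=\xi(i,i)^{1/2}$, and the compatibility \eqref{eqg}, namely $\mathcal{G}(\nu_1)\mathcal{G}(\nu_2)\xi(\nu_2,\nu_1)=\mathcal{G}(\nu_1+\nu_2)$, is exactly what is needed for the product of two $\mathcal{G}$-factors appearing in the inductive step to collapse correctly; I would flag that \eqref{eqg} is the key input making part (ii) go through. A secondary subtlety is tracking the interaction between the grading shift produced by $\mcal S_j$ and the $\xi$-corrections in ${\mcal K}'_\nu$, since an off-by-a-grading error there would spoil the anti-homomorphism check in part (i). Once these scalar identities are verified the rest is the standard induction, so the proof amounts to confirming that the definitions in \eqref{sj} and the choice of $\mathcal{G}$ in \eqref{eqg} have been set up to make the bookkeeping close.
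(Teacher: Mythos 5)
Your plan follows exactly the route the paper takes: the paper's entire proof of this lemma is the single remark that it is ``similar to those for Lemma \ref{lemrho^+} and \eqref{rho+}'', and your part (i) mirrors the relation-checking of Lemma \ref{lemrho^+} while your part (ii) mirrors the induction that established \eqref{rho+}. Those steps are correct and would close.

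However, your central ``flag'' --- that \eqref{eqg} is the key input making part (ii) go through --- is wrong, and you should delete it. The constants $\mathcal{G}$ enter $\rho^-$ only through the generators, one fixed scalar $\frac{\mathcal{G}(j)}{v_j^{-1}-v_j}$ attached to each $F_j$, and in the identity of part (ii) both sides carry exactly the same product of these generator-level scalars: in the base case $y=F_j$ both sides are linear in $\frac{\mathcal{G}(j)}{v_j^{-1}-v_j}$, and in the inductive step the scalars simply ride along through $\rho^-(y'y'')=\rho^-(y'')\rho^-(y')$ and $\Delta_-(y'y'')=\sum y_1'y_1''\otimes y_2'y_2''$, with no need ever to merge $\mathcal{G}(\nu_1)\mathcal{G}(\nu_2)$ into $\mathcal{G}(\nu_1+\nu_2)$. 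The place where \eqref{eqg} genuinely performs the collapsing you describe is Lemma \ref{lemg}, i.e.\ the identity $\phi(x,y)=\mathcal{G}(|x|)\phi'(x,y)$, whose induction produces $\mathcal{G}(|x'|)\mathcal{G}(|x''|)\xi(|y_1|,|x'|)$ and needs \eqref{eqg} to rewrite it as $\mathcal{G}(|x'|+|x''|)$. This misattribution does not invalidate the proof you outline --- the steps go through without \eqref{eqg} --- but executing your plan while hunting for a place to use \eqref{eqg} in this lemma would lead nowhere.
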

The proof is similar to those for Lemma \ref{lemrho^+} and \eqref{rho+}.

We now define a new bilinear form $\phi': \wh{{}'\mathfrak{f}^+}\times \wh{{}'\mathfrak{f}^-} \rightarrow \mbb F$ by
\begin{equation*}
  \phi'(x,y)=\varepsilon_+(\rho^-(y)x),\  \forall x \in \wh{{}'\mathfrak{f}^+}, y \in \wh{{}'\mathfrak{f}^-}.
\end{equation*}
The bilinear form $\phi'$ is well defined.

By Lemma \ref{lemrho^-}(ii), we have
\begin{equation*}
    \phi'(x'x'',y)=\phi'(x'\otimes x'', \Delta_-^{\rm op}(y)),
\end{equation*}
where $\phi'(x'\otimes x'', y'\otimes y'')=\phi'(x', y')\phi'(x'',y'')$, 
for all $x',x''\in  {}'\mrk f ^+,y, y',y'' \in {}'\mathfrak{f}^-$.

Moreover, for any $\nu_1,\nu_2, \tau_1,\tau_2 \in \mbb Z[I]$ and $x\in {}'\mrk f ^+, y\in {}'\mrk f ^-$, we have
\begin{equation}
  \label{phi'kxy}
  \begin{split}
  &\quad\quad\quad\quad\phi'(K_{\nu_1}xJ_{\nu_2}, K'_{\tau_1}yJ'_{\tau_2})=\\
  &\langle\nu_1,\tau_1\rangle
  \langle \nu_1,|y|\rangle
  \langle |x|, \tau_1\rangle\xi(\tau_1,|x|)^{-1}
  \xi(\tau_1,\nu_2)\phi'(x,y).
  \end{split}
\end{equation}
This equality can be directly checked by the definition of $\phi'$. 
We notice that the coefficient on the right hand side of \eqref{phi'kxy} is different from the one in \eqref{phikxy}.
The difference is $\xi(\tau_1,|x|)^{-1}$.
\begin{lem}
  \label{lemg}
 For any homogenous elements $x\in {}'\!\mrk f ^+$ and $y\in {}'\!\mrk f ^-$, we have
  $\phi(x,y)=\mathcal{G}(|x|)\phi'(x,y)$.
\end{lem}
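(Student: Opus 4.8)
The plan is to prove $\phi(x,y)=\mathcal{G}(|x|)\phi'(x,y)$ for homogeneous $x\in {}'\!\mrk f^+$, $y\in {}'\!\mrk f^-$ by a double induction that mirrors the recursive structure of the two pairings. Since both $\phi$ and $\phi'$ vanish on homogeneous elements of unequal grading, I may assume $|x|=|y|$ throughout; in particular both sides are zero unless the gradings match, so the identity is trivially true in that case. The substantive content is the case $|x|=|y|=\nu$ with $\nu\in\mbb N[I]$, and I would argue by induction on the height of $\nu$.

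First I would check the base cases. When $|x|=0$, both $x$ and $y$ are scalars (times group-like elements of $\mathfrak H$, $\mathfrak H'$), and $\mathcal{G}(0)=1$, so the identity reduces to $\phi=\phi'$ on degree zero, which follows immediately from the definitions of $\rho^+$, $\rho^-$ together with part (a) of the skew-Hopf pairing axioms. For the generating case $x=E_i$, $y=F_j$, I would compute both sides explicitly: using ${}_j\mcal S(F_i)=\delta_{ij}\xi(i,j)$ and the normalization $E_j\mapsto(v_j^{-1}-v_j)^{-1}{}_j\mcal S$ gives $\phi(E_i,F_j)$ directly, while $\mcal S_j(E_i)=\delta_{ij}$ with the normalization $F_j\mapsto \frac{\mathcal{G}(j)}{v_j^{-1}-v_j}\mcal S_j$ gives $\phi'(E_i,F_j)$; comparing them produces exactly the factor $\mathcal{G}(i)=\mathcal{G}(|E_i|)$.

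The inductive step is where the work lies, and I would exploit the multiplicativity properties already established. Writing $x=x'x''$ and using \eqref{phixy1y2} for $\phi$ together with its analogue for $\phi'$ (coming from Lemma \ref{lemrho^-}(ii)), both pairings decompose across $\Delta_+(x)=\sum x_1\otimes x_2$ and $\Delta_-^{\rm op}(y)$. Applying the inductive hypothesis to each factor introduces a product $\mathcal{G}(|x_1|)\mathcal{G}(|x_2|)$ on the $\phi$ side. The key point is that the comparison of coefficients between \eqref{phikxy} and \eqref{phi'kxy}, whose discrepancy is precisely the factor $\xi(\tau_1,|x|)^{-1}$, must combine with this product of $\mathcal{G}$'s. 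Here the defining relation \eqref{eqg}, namely $\mathcal{G}(\nu_1)\mathcal{G}(\nu_2)\xi(\nu_2,\nu_1)=\mathcal{G}(\nu_1+\nu_2)$, is exactly what is needed to collapse $\mathcal{G}(|x_1|)\mathcal{G}(|x_2|)$ times the leftover $\xi$-factor into $\mathcal{G}(|x_1|+|x_2|)=\mathcal{G}(|x|)$.

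\textbf{The main obstacle} will be bookkeeping the auxiliary $\xi$-twists correctly. The coefficient discrepancy $\xi(\tau_1,|x|)^{-1}$ in \eqref{phi'kxy} interacts with the Sweedler components under $\Delta_+$, and one must verify that when the grading is split as $|x_1|+|x_2|=|x|$, the accumulated $\xi$-factors from the two pairings differ by exactly $\xi(|x_2|,|x_1|)$ (the symmetric $\xi$ being essential here), matching the left-hand side of \eqref{eqg}. I expect the cleanest route is to phrase the induction multiplicatively in $x$ (rather than in $y$), so that the coalgebra structure $\Delta_+$ governs the recursion and \eqref{eqg} applies directly to the grading decomposition; verifying that the surviving twist is precisely $\xi(|x_2|,|x_1|)$ and not some other combination is the one computation requiring genuine care.
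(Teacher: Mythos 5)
Your proposal is correct and is essentially the paper's own proof: induction on $|x|$, the base case $\phi(E_i,F_j)=\delta_{ij}\,\xi(i,i)/(v_i^{-1}-v_i)=\mathcal{G}(i)\,\phi'(E_i,F_j)$, a multiplicative decomposition $x=x'x''$ paired against $\Delta_-^{\rm op}(y)$ for both pairings, and the recombination $\mathcal{G}(|x'|)\mathcal{G}(|x''|)\,\xi(|x''|,|x'|)=\mathcal{G}(|x'|+|x''|)$ via \eqref{eqg}, using the grading match $|x''|=|y_1|$ exactly as you anticipated. One bookkeeping correction: since the induction is multiplicative in $x$, the relevant multiplicativity for $\phi$ is property (c), i.e.\ \eqref{phix1x2y} (so it is $\Delta_-^{\rm op}(y)$, not $\Delta_+(x)$, that governs the recursion), matched on the $\phi'$ side by Lemma \ref{lemrho^-}(ii); the equation \eqref{phixy1y2} you cite is the property-(b) decomposition of $y$ and is not the one used here.
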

\begin{proof}
  We show it by induction on $|x|$.
  If $|x|=i$, we have
  \begin{equation*}
    \phi(E_i, F_j)=\delta_{ij}\frac{\xi(i,i)}{v_i^{-1}-v_i}=\mathcal{G}(i)\phi'(E_i, F_j).
  \end{equation*}
  Assume that the lemma holds for $x',x''\in  {}'\mrk f ^+$ in the first component.
  For any $y\in {}'\mrk f ^-$, we write $\Delta_-^{\rm op}(y)=\sum K'_{|y_1|}y_2\otimes y_1J'_{|y_2|}$ with $y_1, y_2\in {}'\mrk f ^-$.
  Then we have
  \begin{equation*}
    \begin{split}
      &\phi(x'x'',y)=\textstyle \sum \phi(x', K'_{|y_1|}y_2)\phi(x'', y_1J'_{|y_2|})\\
      =&\textstyle \sum\langle |x'|,|y_1|\rangle
     \mathcal{G}(|x'|)\mathcal{G}(|x''|)\phi'(x',y_2)\phi'(x'',y_1)\\
      =&\textstyle \sum \mathcal{G}(|x'|)\mathcal{G}(|x''|)\xi(|y_1|,|x'|)
      \phi'(x', K'_{|y_1|}y_2)\phi'(x'', y_1J'_{|y_2|})\\
      =&\mathcal{G}(|x'|+|x''|)\phi'(x'x'',y).
    \end{split}
  \end{equation*}
  Here we use the fact that $|x''|=|y_1|$.
  This finishes the proof.
\end{proof}

By Proposition \ref{propserrerelation}, we have
\begin{equation}\label{lr}
{}_lr(D_{ij})=0=r_l(D_{ij}),\quad \forall i\neq j,\ l\in I.
\end{equation}

By comparing the definition of ${}_lr$  with that of $ \mathcal{S}_l$ in \eqref{sj} , we have
\begin{equation}\label{iota}
\mathcal{S}_l |_{'\mathfrak{f}^+}\circ \iota^+=\iota^+ \circ {}_lr .
\end{equation}

\begin{lem}\label{lemis}
  For any homogenous elements $x\in {}'\mrk f $ and $i\in I$, we have
  $${}_i\mcal S |_{'\mathfrak{f}^-}\circ\iota^-(x)=\xi(|x^-|,i)\iota^-\circ r_i (x),$$
  where $x^-=\iota^-(x).$
\end{lem}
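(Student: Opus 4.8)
The plan is to prove the identity by induction on the height of the grading $|x|\in\mbb N[I]$, after first reducing to homogeneous $x$ by linearity: both $\,{}_i\mcal S|_{'\mrk f^-}\circ\iota^-$ and $x\mapsto\xi(|x^-|,i)\,\iota^-(r_i(x))$ are $\mbb F$-linear, and since $\iota^-$ preserves the grading the normalizing scalar $\xi(|x^-|,i)=\xi(|x|,i)$ is constant on each graded piece. Because ${}'\mrk f$ is generated as an algebra by $\{\theta_j\}_{j\in I}$, it then suffices to treat $x=1$, $x=\theta_j$, and products $x=x'x''$ of homogeneous elements for which the identity is already known. This is the exact analogue, for the anti-isomorphism $\iota^-$, of the companion identity \eqref{iota} proved for the isomorphism $\iota^+$; it is precisely the order-reversal built into $\iota^-$ that forces the extra factor $\xi(|x^-|,i)$.

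For $x=1$ both sides vanish, since $r_i(1)=0$ and $\,{}_i\mcal S(1)={}_i\mcal S(K'_0J'_0)=0$. For $x=\theta_j$ one has $\iota^-(\theta_j)=F_j$, so the left side is $\,{}_i\mcal S(F_j)=\delta_{ij}\xi(i,j)$ while the right side is $\xi(j,i)\,\iota^-(r_i(\theta_j))=\delta_{ij}\xi(j,i)$; these agree, as the only nonzero contribution occurs at $i=j$ (and in any case $\xi$ is symmetric). For the inductive step I would write $\iota^-(x'x'')=\iota^-(x'')\iota^-(x')$ and apply the twisted Leibniz rule of \eqref{js} with first factor $\iota^-(x'')$ and second factor $\iota^-(x')$. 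On homogeneous elements of ${}'\mrk f^-$ the auxiliary operators simplify to $\mcal J_i(\iota^-(x'))=\xi(|x'|,i)\iota^-(x')$ and $\mcal K_i(\iota^-(x''))=\langle i,|x''|\rangle\iota^-(x'')$. Substituting the inductive hypothesis for $\,{}_i\mcal S(\iota^-(x'))$ and $\,{}_i\mcal S(\iota^-(x''))$, and regrouping via $\iota^-(a)\iota^-(b)=\iota^-(ba)$, the left side becomes
\[
\xi(|x'x''|,i)\,\iota^-\!\bigl(x'r_i(x'')\bigr)+\langle i,|x''|\rangle\,\xi(|x'|,i)\,\iota^-\!\bigl(r_i(x')x''\bigr).
\]
Expanding the right side through $r_i(x'x'')=v^{-i\cdot|x''|}\beta(i,|x''|)r_i(x')x''+x'r_i(x'')$ yields $\xi(|x'x''|,i)$ times the same two terms, with coefficient $\xi(|x'x''|,i)\,v^{-i\cdot|x''|}\beta(i,|x''|)$ on $\iota^-(r_i(x')x'')$.

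Thus the whole lemma collapses to the single scalar identity
\[
\langle i,|x''|\rangle\,\xi(|x'|,i)=v^{-i\cdot|x''|}\beta(i,|x''|)\,\xi(|x'x''|,i),
\]
which I expect to be the only genuine point of the argument. It follows at once from the definition $\langle i,\mu\rangle=v^{-i\cdot\mu}\beta(i,\mu)\xi(\mu,i)$ together with the multiplicativity $\xi(|x'|,i)\xi(|x''|,i)=\xi(|x'x''|,i)$; the coefficient of $\iota^-(x'r_i(x''))$ matches automatically. The main (if modest) obstacle is therefore bookkeeping: keeping track of which factor the derivation lands on after the order-reversal induced by $\iota^-$, and verifying that the $\mcal J_i$- and $\mcal K_i$-twists of $\,{}_i\mcal S$ conspire with the $v^{-i\cdot|x''|}\beta(i,|x''|)$-twist of $r_i$ exactly through the defining formula for $\langle-,-\rangle$.
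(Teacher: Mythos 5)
Your proof is correct and follows essentially the same route as the paper's: induction on the grading, the twisted Leibniz rule for ${}_i\mcal S$ from \eqref{js} applied to $\iota^-(x'x'')=\iota^-(x'')\iota^-(x')$, the Leibniz rule for $r_i$, and the reduction of everything to the scalar identity $\langle i,|x''|\rangle\,\xi(|x'|,i)=v^{-i\cdot|x''|}\beta(i,|x''|)\,\xi(|x'|+|x''|,i)$, which is exactly how the paper matches coefficients via the definition of $\langle-,-\rangle$ and multiplicativity of $\xi$. The only (harmless) difference is that you spell out the base cases $x=1$ and $x=\theta_j$, which the paper dismisses as trivial.
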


\begin{proof}
  We show it by induction on $|x^-|$. Recall the definition of ${}_i\mathcal{S}$ from \eqref{js}, 
  the case that $|x^-|=j$ is trivial. Assume that the lemma holds for $x,y$.
 Then we have
 \begin{equation*}
   \begin{split}
     &{}_i\mcal S |_{'\mathfrak{f}^-}\circ\iota^-(xy)=\xi(|x^-|,i){}_i\mcal S |_{'\mathfrak{f}^-}(y^-)x^-+
              \langle i, |y^-|\rangle y^- {}_i\mcal S |_{'\mathfrak{f}^-}(x^-)\\
     =&\xi(|x^-|+|y^-|,i)\,\iota^-\circ r_i(y)x^-+\langle i, |y^-|\rangle\xi(|x^-|,i) y^-\, \iota^-\circ r_i (x)\\
     =&\xi(|x^-|+|y^-|,i)\,(\iota^-\circ r_i (y)x^-+\langle i, |y^-|\rangle\xi(|y^-|,i)^{-1} y^-\, \iota^-\circ r_i (x))\\
     =&\xi(|x^-|+|y^-|,i)\,\iota^-\circ r_i (xy).
   \end{split}
 \end{equation*}
 Lemma is proved.
\end{proof}

By \eqref{lr},  \eqref{iota} and Lemma \ref{lemis}, we have
\begin{equation}\label{lssl}
\begin{split}
 &\mathcal{S}_l (\iota^+(D_{ij} ))=\iota^+ \circ {}_lr (D_{ij} )=0, \\
 &{}_l\mathcal{S} (\iota^-(D_{ij} ))=\xi(|D_{ij}^-|,l)\iota^- \circ r_l (D_{ij} )=0, \quad \forall i \neq j,\ l \in I.
\end{split}
\end{equation}

\begin{cor}
  The bilinear form $\phi$ induces a well defined skew-Hopf pairing  
  $\widehat{\mathfrak{f}}^+\times \widehat{\mathfrak{f}}^-\rightarrow \mbb{F}$.
\end{cor}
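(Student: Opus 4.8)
The plan is to show that $\phi$ descends to the quotients, i.e. that $\phi$ vanishes on $\widehat{\mrk J^+}\times \widehat{{}'\mrk f^-}$ and on $\widehat{{}'\mrk f^+}\times \widehat{\mrk J^-}$; once this is done, the four axioms of Definition \ref{def1} for the induced form follow from the corresponding axioms for $\phi$ proved in Proposition \ref{prophopfpair}, since all structure maps descend to the quotients. The key technical inputs are Lemma \ref{lemg}, the anti-homomorphism properties of $\rho^+$ and $\rho^-$ (Lemmas \ref{lemrho^+} and \ref{lemrho^-}), and the vanishing \eqref{lssl}.

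First I introduce the left and right radicals $N^+=\{x\in \widehat{{}'\mrk f^+}\mid \phi(x,y)=0\ \forall y\}$ and $N^-=\{y\in \widehat{{}'\mrk f^-}\mid \phi(x,y)=0\ \forall x\}$. Expanding axiom (c) of Definition \ref{def1} as $\phi(x'x'',y)=\sum\phi(x',y_1)\phi(x'',y_2)$ with $\Delta_-^{\mathrm{op}}(y)=\sum y_1\otimes y_2$ shows that $N^+$ is closed under left and right multiplication, hence a two-sided ideal; symmetrically, axiom (b) shows $N^-$ is a two-sided ideal. Since $\widehat{\mrk J^+}$ (resp. $\widehat{\mrk J^-}$) is the two-sided ideal generated by the elements $\iota^+(D_{ij})$ (resp. $\iota^-(D_{ij})$) of \eqref{Dij}, it therefore suffices to prove that these \emph{generators} lie in $N^+$ (resp. $N^-$).

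For the right radical I compute with $\phi$ directly. Any $x\in \widehat{{}'\mrk f^+}$ is a sum of elements $K_{\nu_1}x_0J_{\nu_2}$ with $x_0=E_{l_1}\cdots E_{l_m}$ a monomial; since $\rho^+$ is an algebra anti-homomorphism, $\rho^+(K_{\nu_1}x_0J_{\nu_2})$ is a nonzero scalar times $\mcal J_{\nu_2}\,{}_{l_m}\mcal S\cdots {}_{l_1}\mcal S\,\mcal K_{\nu_1}$. Applying this operator to $\iota^-(D_{ij})$, the diagonal map $\mcal K_{\nu_1}$ merely rescales, and then the innermost map ${}_{l_1}\mcal S$ annihilates $\iota^-(D_{ij})$ by \eqref{lssl}; hence $\phi(x,\iota^-(D_{ij}))=\varepsilon_-(\rho^+(x)\iota^-(D_{ij}))=0$, so $\iota^-(D_{ij})\in N^-$. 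For the left radical I pass to $\phi'$: reducing the torus factors of $y$ via \eqref{phikxy} and then invoking Lemma \ref{lemg} gives $\phi(\iota^+(D_{ij}),y)=(\text{scalar})\,\mcal G(|D_{ij}|)\,\phi'(\iota^+(D_{ij}),y_0)$ with $\mcal G(|D_{ij}|)\neq 0$. Now $\phi'(\iota^+(D_{ij}),y_0)=\varepsilon_+(\rho^-(y_0)\iota^+(D_{ij}))$, and for a monomial $y_0=F_{l_1}\cdots F_{l_m}$ the anti-homomorphism $\rho^-$ turns $\rho^-(y_0)\iota^+(D_{ij})$ into a scalar multiple of $\mcal S_{l_m}\cdots \mcal S_{l_1}\iota^+(D_{ij})$, whose innermost map $\mcal S_{l_1}$ kills $\iota^+(D_{ij})$ by \eqref{lssl}. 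Thus $\phi(\iota^+(D_{ij}),y)=0$ and $\iota^+(D_{ij})\in N^+$.

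Combining, $\widehat{\mrk J^+}\subseteq N^+$ and $\widehat{\mrk J^-}\subseteq N^-$, so $\phi$ factors through a bilinear form $\widehat{\mrk f}^+\times \widehat{\mrk f}^-\to \mbb F$, and the four skew-Hopf pairing axioms are inherited. The main obstacle is precisely the verification that the generators lie in the radicals: the decisive devices are (i) using the anti-homomorphism property so that $\rho^\pm$ applied to a monomial reduces, after the harmless diagonal scalings produced by the torus maps $\mcal K_\nu,\mcal J_\nu,\mcal K'_\nu,\mcal J'_\nu$, to a single innermost derivation ${}_l\mcal S$ (resp. $\mcal S_l$), so that the uniform vanishing \eqref{lssl} over all $l$ applies to every monomial, and (ii) switching between $\phi$ and $\phi'$ through Lemma \ref{lemg} so that in each case the Serre element $\iota^\pm(D_{ij})$ is the argument being differentiated rather than the one converted into an operator.
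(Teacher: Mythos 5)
Your proposal is correct and takes essentially the same route as the paper's own proof: reduce to the generators $\iota^{\pm}(D_{ij})$ (the radical being a two-sided ideal by the pairing axioms), show $\phi(x,\iota^-(D_{ij}))=0$ by using that $\rho^+$ is an algebra anti-homomorphism so that the innermost operator ${}_l\mcal S$ annihilates $\iota^-(D_{ij})$ by \eqref{lssl}, and handle $\phi(\iota^+(D_{ij}),y)$ by switching to $\phi'$ via Lemma \ref{lemg} so that $\mcal S_l$ does the killing. The only cosmetic differences are that you carry the torus factors along through the diagonal maps while the paper first strips them using \eqref{phikxy}, and you should note the trivial case of an empty monomial $x_0$ (respectively $y_0$), where vanishing follows from the grading, since $\phi$ pairs homogeneous elements of different degrees to zero and $|D_{ij}|\neq 0$.
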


\begin{proof}
  By the definition of $\ \widehat{\mrk f }^+$ and $\ \widehat{\mrk f }^-$,
  it is enough to show that $\widehat{\mrk J^+}$ and  $\widehat{\mrk J^-}$ are in the radical of $\phi$,
  i.e., $\phi(\iota^+(D_{ij} ), y)=0=\phi(x, \iota^-(D_{ij} ))$ for any $i\neq j$, 
  $ x\in {}'\!\widehat{\mrk f }^+$ and $y\in {}'\!\widehat{\mrk f }^-$.
  By \eqref{phikxy}, we may assume that $ x\in {}'\!\mrk f ^+$ and $y\in {}'\!\mrk f ^-$.
  If $|x|=|y|=0$, there is nothing to show.
  We now assume that $|x|$ and $|y|$ are not equal to zero.
  Then we can write $x=E_lx'$ for some $l$.
  By the definition of $\phi$ and \eqref{lssl}, we have
  \begin{equation*}
    \phi(x, \iota^-(D_{ij}))=\varepsilon_-(\rho^+(x'){}_l\mcal S(\iota^-(D_{ij} )))=0, \quad \forall i\neq j.
  \end{equation*}
  Similarly, $\phi'(\iota^+(D_{ij},y))=0$ for any $i\neq j$.
  By Lemma $\ref{lemg}$ , we have
  \begin{equation*}
       \phi(\iota^+(D_{ij}),y)=\mathcal{G}(|D_{ij}|)\phi'( \iota^+(D_{ij}),y)=0,  \quad \forall i\neq j.
  \end{equation*}
 This finishes the proof.
\end{proof}

By Proposition \ref{propdrinfelddouble}, there is a Hopf algebra structure on 
$\wh{\mrk{f}}^+\otimes \wh{\mrk{f}}^-$, denoted this Hopf algebra by $\mbf U_{\beta,\xi}$.

\subsection{Algebraic presentation of $\mbf U_{\beta,\xi}$}\label{sec3.4}

It is clear that $\mbf U_{\beta,\xi}$ is generated by $E_i, F_i,J_i^{\pm 1}, J_i'^{\pm 1}, K_i^{\pm 1}$ 
and $ K'^{\pm 1}_i$ for all $i\in I$. We now calculate defining relations of $\mbf U_{\beta,\xi}$ in these generators.

Since $\wh{\mrk f }^+$ and $\wh{\mrk f }^-$ can be thought as subalgebras of $\mbf U_{\beta,\xi}$, 
the algebra $\mbf U_{\beta,\xi}$ carries on all defining relations of $\wh{\mrk f }^+$ and $\wh{\mrk f }^-$.
It is easy to show that $J_i^{\pm 1}, J_i'^{\pm 1}, K_i^{\pm 1}$ and $  K'^{\pm 1}_i$ commute with each other.

Since $\Delta^2_-(J_i')=J'_i\otimes J'_i\otimes J'_i$,\  $\Delta^2_-(K_i')=K'_i\otimes K'_i\otimes K'_i$ and
\begin{equation}
  \label{deltaE}
  \Delta_+^2(E_i)=E_i\otimes J_i\otimes J_i+K_i\otimes E_i\otimes J_i+ K_i\otimes K_i\otimes E_i,
\end{equation}
by (iii) in Proposition \ref{propdrinfelddouble}, we have
\begin{equation*}
  \begin{split}
    K_i'E_j=&\ \phi(K_j, K_i'^{-1})E_jK'_i\phi(J_j, K_i')=\langle j,i\rangle^{-1}\xi(i,j)E_jK_i',\ {\rm and}\\
    &J_i'E_j=\phi(K_j, J_i'^{-1})E_jJ_i'\phi(J_j, J_i')=E_jJ_i'.
  \end{split}
\end{equation*}
Since $\Delta^2_+(J_i)=J_i\otimes J_i\otimes J_i$,\  $\Delta^2_+(K_i)=K_i\otimes K_i\otimes K_i$ and
\begin{equation}
  \label{deltaF}
  \Delta_-^2(F_i)=J'_i\otimes J'_i\otimes F_i +J'_i\otimes F_i\otimes K_i'+ F_i\otimes K_i'\otimes K'_i,
\end{equation}
 we have
 \begin{equation*}
  \begin{split}
    F_j K_i&\ =\phi(K_i, J_j'^{-1})K_i F_j\phi(K_i, K_j')=\langle i,j\rangle K_i F_j,\ {\rm and}\\
    &F_j J_i=\phi(J_i, J_i'^{-1})J_iF_j\phi(J_i, K_j')=\xi(j,i)J_iF_j.
  \end{split}
\end{equation*}
 Similarly, by \eqref{deltaE} and \eqref{deltaF}, we have
 \begin{equation*}
   \begin{split}
   F_jE_i =\phi(E_i,-&J_j'^{-1}F_jK_j'^{-1})J_iK_j'\phi(J_i,K_j')+E_iF_j\phi(J_i,K_j')
     +K_iJ_j'\phi(E_i,F_j)\\
     &=\xi(j,i)E_iF_j+\delta_{ij}\frac{\xi(j,i)}{v_i^{-1}-v_i}(K_iJ_j'-J_iK_j').
   \end{split}
 \end{equation*}

Summarizing up, we have the following presentation of $\mbf U_{\beta,\xi}$ generated by symbols 
$E_i, F_i, J_i^{\pm 1},J_i'^{\pm 1}, K_i^{\pm 1}, K_i'^{\pm 1},$ $\forall i\in I$, and subjects to the following relations.
\allowdisplaybreaks
\begin{eqnarray*}
  (R1) & &J_i^{\pm 1}, J_i'^{\pm 1}, K_i^{\pm 1}\ \text{and}\  K'^{\pm 1}_i\ \text{commute with each other},\\
 & &    K_i^{\pm 1}K_i^{\mp 1}=K'^{\pm 1}_iK'^{\mp 1}_i=J_i^{\pm 1}J_i^{\mp 1}=J_i'^{\pm 1}J_i'^{\mp 1}=1.\\
  (R2)& &K_iE_j=\langle i,j\rangle E_jK_i,\quad K'_iE_j=\langle j, i\rangle^{-1}\xi(i,j) E_jK'_i,\\
     & &K_iF_j=\langle i, j\rangle^{-1} F_jK_i,\quad K'_iF_j=\langle j,i\rangle \xi(i,j)^{-1} F_jK'_i,\\
     & &J_iE_j=\xi(j,i)E_jJ_i,\quad  J'_iE_j=E_jJ'_i,\\
     & &J_iF_j=\xi(j,i)^{-1}F_jJ_i,\ \ J'_iF_j=F_jJ'_i.\\
  (R3)& &E_iF_j-\xi(j,i)^{-1}F_j E_i=\delta_{ij}\frac{K_iJ'_i-J_iK'_i}{v_i-v^{-1}_i}.\\
  (R4) & &\sum_{k+k'=1-a_{ij}}(-1)^k(\beta(i,j))^{-k}E_i^{(k)}E_j E_i^{(k')}=0,\quad {\rm if}\ i\not =j,\\
  & &\sum_{k+k'=1-a_{ij}}(-1)^k(\beta(i,j))^{-k}F_i^{(k')}F_j F_i^{(k)}=0,\quad {\rm if}\ i\not =j.\\
\end{eqnarray*}

We note that $\mbf U_{\beta,\xi}$ admits a triangle decomposition $\mbf U_{\beta,\xi} \simeq \mbf U^+ \otimes \mbf U^0 \otimes \mbf U^-$
by repeating the argument for ordinary quantum groups word by word,
where $\mbf U^+$ (resp. $\mbf U^0$ or $\mbf U^-$) is a subalgebra of $\mbf U_{\beta,\xi}$ 
generated by $E_i$ (resp. $J_i^{\pm 1},J_i'^{\pm 1}, K_i^{\pm 1}, K_i'^{\pm 1}$ or $F_i$) for all $i \in I$.

\subsection{Hopf algebra structure on $\mbf U_{\beta,\xi}$}\label{sec3.5}

By Proposition \ref{propdrinfelddouble}, $\mbf U_{\beta,\xi}$ has a Hopf algebra structure with 
the comultiplication $\Delta$, the counit $\varepsilon$ and the antipode $S$ given as follows.
\allowdisplaybreaks
  \begin{eqnarray*}
  & \Delta(E_i)=E_i\otimes J_i+ K_i\otimes E_i,\quad &\Delta(K_i^{\pm 1})= K_i^{\pm 1} \otimes K_i^{\pm 1},\vspace{6pt}\\
  & \Delta(F_i)=J'_i \otimes F_i+ F_i\otimes K'_i, \quad & \Delta(K'^{\pm 1}_i)= K'^{\pm 1}_i \otimes K'^{\pm 1}_i, \vspace{6pt}\\
  &\Delta(J_i^{\pm 1})=J_i^{\pm 1}\otimes J_i^{\pm 1}, \quad & \Delta(J'^{\pm 1}_i)=J'^{\pm 1}_i\otimes J'^{\pm 1}_i,\\
  & \varepsilon(K_i^{\pm 1})=\varepsilon(J_i^{\pm 1})=1,\quad & \varepsilon(E_i)=0,\vspace{6pt}\\
  &\varepsilon(K'^{\pm 1}_i)=\varepsilon(J'^{\pm 1}_i)=1,\quad &\varepsilon(F_i)=0,\\
  & S(E_i)=-K_i^{-1}E_iJ_i^{-1},\quad & S(K_i^{\pm 1})=K_i^{\mp 1},\quad S(J_i^{\pm 1})=J_i^{\mp 1},\vspace{6pt}\\
  & S(F_i)=-J_i'^{-1}F_iK'^{-1}_i,\quad & S(K'^{\pm 1}_i)=K'^{\mp 1}_i,\quad S(J'^{\pm 1}_i)=J_i'^{^{\mp 1}}.
  \end{eqnarray*}

\section{Specializations of $\mbf U_{\beta,\xi}$}\label{sec4}
In this section, we shall show that various quantum algebras in Section \ref{sec2.3} 
admit a Drinfeld double construction by specializing the parameters $\beta(i,j)$ and $\xi(i,j)$, for all $i, j \in I$.
\subsection{The entire two-parameter quantum algebras}
  In the case of the two-parameter quantum algebra (see Example \ref{extwoparameter}), we set
 \begin{equation*}
\beta(i,j)=t^{\Omega_{ji}-\Omega_{ij}}\quad{\rm and}\quad\xi(i,j)=1,\quad \forall i, j\in I.
\end{equation*}
Let $\mathcal{G}(\nu)=1$ for all $\nu\in \mbb N[I]$, then
 \eqref{eqg} holds.

Under these settings, we get the following presentation of an entire two-parameter quantum algebra $\mbf U_{v,t}$,
generated by symbols $E_i, F_i, K_i^{\pm 1}, K_i'^{\pm 1}, $
$J_i^{\pm 1}, J_i'^{\pm 1}$, $\forall i\in I$, and subjects to the following relations.
\allowdisplaybreaks
\begin{eqnarray*}
  (R_11) & &J_i^{\pm 1}, J_i'^{\pm 1}, K_i^{\pm 1}\ \text{and}\  K'^{\pm 1}_i\ \text{commute with each other},\\
 & &    K_i^{\pm 1}K_i^{\mp 1}=K'^{\pm 1}_iK'^{\mp 1}_i=J_i^{\pm 1}J_i^{\mp 1}=J_i'^{\pm 1}J_i'^{\mp 1}=1.\\
 (R_12)& &K_iE_jK^{-1}_i=v^{-i\cdot j}t^{\Omega_{ji}-\Omega_{ij}} E_j,\ \ K'_iE_jK'^{-1}_i=v^{i\cdot j}t^{\Omega_{ji}-\Omega_{ij}}E_j,\\
     & &K'_iF_jK'^{-1}_i=v^{-i\cdot j}t^{\Omega_{ij}-\Omega_{ji}} F_j,\ \ K_iF_jK^{-1}_i=v^{i\cdot j}t^{\Omega_{ij}-\Omega_{ji}}F_j,\\
     & &J_iE_jJ^{-1}_i=E_j,\ \ J'_iE_jJ'^{-1}_i=E_j,\ \ J_iF_jJ^{-1}_i=F_j,\ \ J'_iF_jJ'^{-1}_i=F_j.\\
  (R_13)& &E_iF_j-F_j E_i=\delta_{ij}\frac{K_iJ'_i-J_iK'_i}{v_i-v^{-1}_i}.\\
  (R_14)& &\sum_{k+k'=1-a_{ij}}(-1)^k\begin{bmatrix}1-a_{ij}\\k\end{bmatrix}_{v_i}
      t^{k(\Omega_{ij}-\Omega_{ji})}E_i^{k}E_jE_i^{k'}=0, \quad {\rm if}\ i\not =j,\\
  & &\sum_{k+k'=1-a_{ij}}(-1)^k\begin{bmatrix}1-a_{ij}\\k\end{bmatrix}_{v_i}
      t^{k(\Omega_{ij}-\Omega_{ji})}F_i^{k'}F_jF_i^{k}=0, \quad {\rm if}\ i\not =j.\\
\end{eqnarray*}

Let $U_{v,t}$ be the two-parameter quantum algebra in \cite{FL1} associated to $(I,\cdot)$.
\begin{prop}\label{proptwoparameter}
The map $\Phi:U_{v,t}\rightarrow\mbf U_{v,t}/\langle J_i-1, J'_i-1\rangle$ sending 
 $E_i\mapsto F_i$, $F_i\mapsto E_i$, $K_{i}\mapsto -K_i$ and $K'_i\mapsto -K'_i$ for all $i\in I$ is a $\mbb Q(v,t)$-algebra isomorphism.
\end{prop}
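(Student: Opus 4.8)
The plan is to treat both algebras through their presentations, reducing the verification that $\Phi$ is a homomorphism to a relation-by-relation comparison, and then to exhibit an explicit two-sided inverse. First I would recall from \cite{FL1} the defining relations of $U_{v,t}$ and note that its Cartan part involves only $K_i^{\pm1}, K_i'^{\pm1}$; this is exactly why the natural target is $\mbf U_{v,t}/\langle J_i-1, J_i'-1\rangle$, in which the relations $(R_11)$–$(R_14)$ collapse to relations among $E_i, F_i, K_i^{\pm1}, K_i'^{\pm1}$ after setting $J_i=J_i'=1$. In particular $(R_13)$ becomes $E_iF_j-F_jE_i=\delta_{ij}(K_i-K_i')/(v_i-v_i^{-1})$, and the conjugation relations $(R_12)$ lose their $J$- and $J'$-rows.

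Next I would check that $\Phi$ is a well-defined algebra homomorphism by verifying that the image under $\Phi$ of each defining relation of $U_{v,t}$ holds in the quotient. The torus relations are immediate, since $-K_i$ and $-K_i'$ remain invertible and mutually commuting. For the conjugation relations the two sign changes cancel, because $(-K_i)E_j(-K_i)^{-1}=K_iE_jK_i^{-1}$, so these transport once the roles of $E$ and $F$ are interchanged. The substitutions $K_i\mapsto -K_i,\ K_i'\mapsto -K_i'$ are designed precisely to absorb the sign produced by interchanging $E$ and $F$ in the commutator relation: the image of its left-hand side is $F_iE_j-E_jF_i=-(E_jF_i-F_iE_j)$, which by $(R_13)$ equals $-\delta_{ij}(K_i-K_i')/(v_i-v_i^{-1})$, while the image of its right-hand side is $\delta_{ij}(\Phi(K_i)-\Phi(K_i'))/(v_i-v_i^{-1})=-\delta_{ij}(K_i-K_i')/(v_i-v_i^{-1})$, so the two agree.

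The crux is the comparison of the Serre relations. Under $\Phi$ the $E$-Serre relation of $U_{v,t}$ is sent to an expression in the $F_i$, which must be shown to coincide with the $F$-Serre relation in $(R_14)$, and symmetrically for the $F$-Serre relation of $U_{v,t}$ against the $E$-Serre relation. This is where the exchange $E_i\leftrightarrow F_i$ interacts with the opposite orderings $E_i^{k}E_jE_i^{k'}$ and $F_i^{k'}F_jF_i^{k}$ occurring in the two families; conceptually, this opposite ordering is forced by the anti-isomorphism $\iota^-$ used to build $\wh{{}'\mrk f^-}$. I would carry out the match by reindexing the summation via $k\leftrightarrow k'$ and invoking the symmetry $\binom{1-a_{ij}}{k}_{v_i}=\binom{1-a_{ij}}{k'}_{v_i}$, checking that the sign $(-1)^k$ and the power $t^{k(\Omega_{ij}-\Omega_{ji})}$ reassemble correctly into the target relation.

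Finally, to conclude that $\Phi$ is an isomorphism I would define the candidate inverse $\Psi$ on generators by the same recipe $E_i\mapsto F_i,\ F_i\mapsto E_i,\ K_i\mapsto -K_i,\ K_i'\mapsto -K_i'$, observe that the identical relation comparison shows $\Psi$ is a well-defined homomorphism in the reverse direction, and note that $\Phi$ and $\Psi$ are mutually inverse on generators, since the double sign change on the Cartan part and the double swap of $E,F$ are both the identity. The main obstacle I anticipate is exactly the bookkeeping in the Serre comparison of the preceding paragraph, namely reconciling the ordering and $t$-power conventions of $U_{v,t}$ in \cite{FL1} with those of $(R_14)$ through the reindexing $k\leftrightarrow k'$; once this single compatibility is settled, the remaining verifications are routine.
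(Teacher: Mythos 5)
The paper states Proposition \ref{proptwoparameter} with no proof at all, so there is no argument of the authors to compare against; your strategy --- match the two presentations relation by relation and exhibit the inverse given by the same recipe --- is the only reasonable one. Your handling of the torus relations, the conjugation relations, and the mixed relation is correct: conjugation by $-K_i$ equals conjugation by $K_i$, and the signs $K_i\mapsto -K_i$, $K'_i\mapsto -K'_i$ exactly compensate the sign flip of the commutator under $E_i\leftrightarrow F_i$, as you compute.

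However, the mechanism you propose at your self-identified crux, the Serre comparison, cannot do what you claim. Reindexing $k\leftrightarrow k'$ is not neutral on the two-parameter power: one has $(-1)^{k'}=(-1)^{1-a_{ij}}(-1)^k$ and $\begin{bmatrix}1-a_{ij}\\k'\end{bmatrix}_{v_i}=\begin{bmatrix}1-a_{ij}\\k\end{bmatrix}_{v_i}$, but $t^{k'(\Omega_{ij}-\Omega_{ji})}=t^{(1-a_{ij})(\Omega_{ij}-\Omega_{ji})}\,t^{-k(\Omega_{ij}-\Omega_{ji})}$, so the $F$-Serre relation of $(R_14)$ is equivalent, up to a global unit, to
\begin{equation*}
\sum_{k+k'=1-a_{ij}}(-1)^{k}\begin{bmatrix}1-a_{ij}\\k\end{bmatrix}_{v_i}t^{-k(\Omega_{ij}-\Omega_{ji})}F_i^{k}F_jF_i^{k'}=0,
\end{equation*}
with the exponent of $t$ \emph{inverted}, whereas the image under $\Phi$ of an $E$-Serre relation written in the form recalled for $\mathfrak f_{v,t}$ in Example \ref{extwoparameter} (standard ordering, power $t^{+k(\Omega_{ij}-\Omega_{ji})}$) is the same sum with $t^{+k(\Omega_{ij}-\Omega_{ji})}$. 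These two relations differ term by term by $t^{2k(\Omega_{ij}-\Omega_{ji})}$ and are genuinely distinct; no reindexing can repair a sign discrepancy in the exponent of $t$. (This is exactly where the two-parameter case departs from Lusztig's one-parameter setting, in which the ordering of a Serre relation is immaterial.) Consequently your argument closes only if the presentation of $U_{v,t}$ in \cite{FL1} already has its two Serre families in mutually mirrored form --- the $F$-relations in the ordering $F_i^{k}F_jF_i^{k'}$ with $t^{k(\Omega_{ij}-\Omega_{ji})}$ and the $E$-relations in the opposite ordering $E_i^{k'}E_jE_i^{k}$ (equivalently, with $t^{-k}$) --- in which case $\Phi$ carries them onto the two relations of $(R_14)$ \emph{verbatim}, and no reindexing is needed at all. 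So the fix is to quote the exact defining relations of $U_{v,t}$ from \cite{FL1} and observe this verbatim correspondence, rather than to appeal to the $k\leftrightarrow k'$ symmetry; as written, your step would fail precisely in the case your own wording suggests, namely if the $E$-Serre relation of $U_{v,t}$ had the same form as the $\mathfrak f_{v,t}$-relation displayed in Example \ref{extwoparameter}.
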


By Proposition \ref{proptwoparameter}, if we set $J_i=J'_i=1$ in Section \ref{sec3}, 
then the construction in Section \ref{sec3} provides a Drinfeld double construction of $U_{v,t}$. 
This recovers the results in \cite{BGH, BW}.

\subsection{The entire quantum superalgebras}
  In the case of the quantum superalgebra (see Example \ref{exsuper}), we set
\[\xi(i,j)=t^{2\mathcal{P}(i)\mathcal{P}(j)}\quad {\rm and}\quad \beta(i,j)=t^{i\cdot j}t^{2\mathcal{P}(i)\mathcal{P}(j)},\ \forall i, j\in I,\]
where $t=\mathbf{i}$.

Then, $\langle i, j\rangle=(v^{-1}t)^{i\cdot j}$.
We have
\begin{equation*}\label{xiij}
\xi(i,j)\xi(j,i)=1\quad {\rm and}\quad \langle i,j\rangle=\langle j,i\rangle, \  \forall i,j\in I.
\end{equation*}
Let $\mathcal{G}(\nu)=t^{(\mathcal{P}(\nu))^2}$ for all $\nu\in \mbb N[I]$, 
where $\mathcal{P}(\nu)=\sum_i\nu_i\mathcal{P}(i)$.
It's easy to check that $\mathcal{G}(\nu)$ satisfies \eqref{eqg}.
Under these settings, we have the following presentation of an entire quantum superalgebra 
$\mbf U_{v,\mathbf{i}}$ generated by symbols $E_i, F_i, J_i^{\pm 1},J_i'^{\pm 1}, K_i^{\pm 1}, K_i'^{\pm 1},$ $\forall i\in I$, 
and subjects to the following relations.
\allowdisplaybreaks
\begin{eqnarray*}
  (R_21) & &J_i^{\pm 1}, J_i'^{\pm 1}, K_i^{\pm 1}\ \text{and}\  K'^{\pm 1}_i\ \text{commute with each other},\\
 & &    K_i^{\pm 1}K_i^{\mp 1}=K'^{\pm 1}_iK'^{\mp 1}_i=J_i^{\pm 1}J_i^{\mp 1}=J_i'^{\pm 1}J_i'^{\mp 1}=1.\\
  (R_22) & &K_iE_j=v^{-i\cdot j}t^{i\cdot j} E_jK_i,\quad K'_iE_j=v^{i\cdot j}t^{-i\cdot j}t^{2\mathcal{P}(i)\mathcal{P}(j)} E_jK'_i,\\
     & &K_iF_j=v^{i\cdot j}t^{-i\cdot j} F_jK_i,\quad K'_iF_j=v^{-i\cdot j}t^{i\cdot j}t^{2\mathcal{P}(i)\mathcal{P}(j)} F_jK'_i,\\
     & &J_iE_j=t^{2\mathcal{P}(i)\mathcal{P}(j)}E_jJ_i,\quad  J'_iE_j=E_jJ'_i,\\
     & &J_iF_j=t^{2\mathcal{P}(i)\mathcal{P}(j)}F_jJ_i,\ \ J'_iF_j=F_jJ'_i.\\
  (R_23) & &E_iF_j-t^{2\mathcal{P}(i)\mathcal{P}(j)}F_j E_i=\delta_{ij}\frac{K_iJ'_i-J_iK'_i}{v_i-v^{-1}_i}.\\
  (R_24)  & &\sum_{k+k'=1-a_{ij}}(-1)^k\begin{bmatrix}1-a_{ij}\\k\end{bmatrix}_{v_i}
  t^{k i\cdot j+2k\mathcal{P}(i)\mathcal{P}(j)}E_i^{k}E_j E_i^{k'}=0,\quad {\rm if}\ i\not =j,\\
  & &\sum_{k+k'=1-a_{ij}}(-1)^k\begin{bmatrix}1-a_{ij}\\k\end{bmatrix}_{v_i}
  t^{k i\cdot j+2k\mathcal{P}(i)\mathcal{P}(j)}F_i^{k'}F_j F_i^{k}=0,\quad {\rm if}\ i\not =j.
\end{eqnarray*}
Let $U_{q,\mathbf{i}}$ be the quantum superalgebra defined in \cite{CHW1} associated to $(I,\cdot)$ which is a bar-consistent super Cartan datum.
Let $\mcal I$ be the ideal of $\mbf U_{v, \mbf i}$ generated by $\{K_iK_i'J_i-1,\ \forall i\in I\}$,
and $\mbf U_{v, \mbf i}^{sq}$ the subalgebra of $\mbf U_{v, \mbf i}/\mcal I$ generated by $E_i$, $F_i$, $J_i'^{\pm}$, $K_i^{\pm}$ and $(J_iK_i')^{\pm}$.
Then we have the following proposition.
\begin{prop}
 The map $\varsigma: U_{q,\mathbf{i}}\rightarrow \mbf U_{v, \mbf i}^{sq}$ sending
 $$q\mapsto v^{-1}t, \ E_i\mapsto E_i, \ F_i\mapsto F_i, \  J_i \mapsto t_i^2J_i',\ K_i\mapsto t_i^{-1}K_i, \ \forall\ i\in I$$
  is a $\mbb Q$-algebra isomorphism.
\end{prop}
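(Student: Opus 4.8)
The plan is to show that $\varsigma$ is a well-defined $\mbb Q$-algebra homomorphism and then that it is bijective, the latter via the triangular decompositions of both algebras. Note first that $\varsigma$ is genuinely only a $\mbb Q$-algebra map: it carries the ground field $\mbb Q(q,\mbf i)$ of $U_{q,\mbf i}$ into $\mbb Q(v,\mbf i)$ through the embedding $q\mapsto v^{-1}t$, which is an isomorphism onto its image, so once the ``algebra part'' is shown to be bijective the field discrepancy causes no trouble. Throughout I would use the defining relations $(R_21)$--$(R_24)$ of $\mbf U_{v,\mbf i}$ together with the single extra relation $K_iK_i'J_i=1$ imposed in the quotient $\mbf U_{v,\mbf i}/\mcal I$; the latter lets me eliminate $K_i'=J_i^{-1}K_i^{-1}$, whence $J_iK_i'=K_i^{-1}$ since all group-likes commute.

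First I would recall the presentation of $U_{q,\mbf i}$ from \cite{CHW1}: generators $E_i,F_i,K_i^{\pm1},J_i^{\pm1}$ with commuting group-likes, scalar commutation relations of the form $K_iE_jK_i^{-1}=q^{\cdots}E_j$ and $J_iE_jJ_i^{-1}=(-1)^{\cdots}E_j$ (and their $F$-analogues), the super Serre relations, and the Drinfeld-type relation $E_iF_j-(-1)^{\mcal P(i)\mcal P(j)}F_jE_i=\delta_{ij}(\cdots)/(q_i-q_i^{-1})$. The group-like and scalar-action relations are routine monomial checks: under $K_i\mapsto t_i^{-1}K_i$ and $J_i\mapsto t_i^2J_i'$ each such relation is transported to the corresponding identity read off from $(R_22)$, the scalar factors $t_i$ cancelling because $t^{2\mcal P(i)\mcal P(j)}=(-1)^{\mcal P(i)\mcal P(j)}$. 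The super Serre relations map to $(R_24)$: indeed Example \ref{exsuper} already identifies the half-algebra $\mrk f_\beta$ with $\mrk f_{v,\mbf i}$, so the $E$- and $F$-Serre relations are literally the same.

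The delicate point, and the one I expect to be the main obstacle, is the Drinfeld relation $(R_23)$. Here one must check that $\varsigma$ sends the Cartan term of $U_{q,\mbf i}$ to $\delta_{ij}(K_iJ_i'-J_iK_i')/(v_i-v_i^{-1})=\delta_{ij}(K_iJ_i'-K_i^{-1})/(v_i-v_i^{-1})$ in the quotient. This requires careful bookkeeping of the scalars $t_i$ and of the super quantum integer: one computes $\varsigma$ of the numerator using $\varsigma(J_iK_i)=t_iK_iJ_i'$ and $\varsigma(K_i^{-1})=t_iK_i^{-1}$, computes $\varsigma$ of the denominator using $q_i\mapsto v_i^{-1}t_i$, and verifies that the two scalar contributions combine into the required factor $(v_i-v_i^{-1})^{-1}$. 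The entire point of the normalization $J_i\mapsto t_i^2J_i'$, $K_i\mapsto t_i^{-1}K_i$ is to force exactly this cancellation, so this is where the specific choice of $\varsigma$ is really being used.

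For bijectivity I would argue as follows. Surjectivity is immediate: $E_i,F_i$ lie in the image by definition, while $J_i'=t_i^{-2}\varsigma(J_i)$, $K_i=t_i\varsigma(K_i)$, and $(J_iK_i')^{\pm1}=K_i^{\mp1}$ in the quotient are therefore in the image as well, and these exhaust the generators of $\mbf U_{v,\mbf i}^{sq}$. For injectivity I would pass to triangular decompositions. By the triangle decomposition of $\mbf U_{\beta,\xi}$ recorded in Section \ref{sec3.4}, $\mbf U_{v,\mbf i}^{sq}$ decomposes as $\mbf U^{sq,+}\otimes\mbf U^{sq,0}\otimes\mbf U^{sq,-}$, while $U_{q,\mbf i}$ carries its own decomposition from \cite{CHW1}; the map $\varsigma$ respects both, sending the $E$-part to the $E$-part, the $F$-part to the $F$-part and the torus to the torus. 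On the $\pm$ parts $\varsigma$ is an isomorphism because, by Example \ref{exsuper}, both halves are governed by identical Serre relations and hence coincide with $\mrk f_{v,\mbf i}$. On the torus one checks that $K_iK_i'J_i=1$ eliminates $K_i'$ and that the generator $J_iK_i'$ of $\mbf U^{sq,0}$ collapses to $K_i^{-1}$, so that $\mbf U^{sq,0}=\langle K_i^{\pm1},J_i'^{\pm1}\rangle$ has the same rank as the torus $\langle K_i^{\pm1},J_i^{\pm1}\rangle$ of $U_{q,\mbf i}$, with $\varsigma$ matching one set of free group-like generators bijectively to the other. The three factors being isomorphisms, $\varsigma$ is an isomorphism. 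The remaining care is to confirm that the torus ranks truly agree after the quotient, which is the second place where the precise form of $\mcal I$ is essential.
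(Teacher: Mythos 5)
The paper states this proposition with no proof at all (as with the other propositions in Section \ref{sec4}), so your proposal has to stand on its own. Its skeleton --- verify the defining relations, then get bijectivity from surjectivity on generators plus triangular decompositions --- is the natural strategy. The genuine gap is that you never carry out the one computation you yourself identify as the entire point, and the form of the source relation you quote would make that computation fail. You write the commutator relation of $U_{q,\mathbf{i}}$ with denominator $q_i-q_i^{-1}$ (read as $q_i=q^{d_i}$). Under $\varsigma$ one has $q_i\mapsto v_i^{-1}t_i$, $\varsigma(J_iK_i)=t_iK_iJ_i'$ and $\varsigma(K_i^{-1})=t_iK_i^{-1}$, so the Cartan term transforms into
\[
\frac{t_i\bigl(K_iJ_i'-K_i^{-1}\bigr)}{v_i^{-1}t_i-v_it_i^{-1}}=\frac{K_iJ_i'-K_i^{-1}}{v_i^{-1}-t_i^{-2}v_i},
\]
and since $t_i^{-2}=(-1)^{d_i}$ this equals $\frac{K_iJ_i'-K_i^{-1}}{v_i^{-1}+v_i}$ when $d_i$ is odd and $-\frac{K_iJ_i'-K_i^{-1}}{v_i-v_i^{-1}}$ when $d_i$ is even; in neither case is it the right-hand side of $(R_23)$ in the quotient, namely $\frac{K_iJ_i'-K_i^{-1}}{v_i-v_i^{-1}}$, and for odd $d_i$ no choice of numerator scalars can repair a denominator proportional to $v_i+v_i^{-1}$. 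The relation in \cite{CHW1} is the covering-group relation $E_iF_j-\pi^{\mcal P(i)\mcal P(j)}F_jE_i=\delta_{ij}\,\frac{\tilde{J}_i\tilde{K}_i-\tilde{K}_i^{-1}}{\pi_iq_i-q_i^{-1}}$ with $\pi=t^2$, $\pi_i=\pi^{d_i}$: the denominator is the super quantum integer $\pi_iq_i-q_i^{-1}$, not $q_i-q_i^{-1}$, and only with it (using $t_i^4=1$, $t_i^2=\pi_i$ and bar-consistency) can the $t_i$-bookkeeping produce the factor $(v_i-v_i^{-1})^{-1}$. Since the scalars $t_i^2$, $t_i^{-1}$ in $\varsigma$ exist precisely to effect this cancellation, asserting that ``the two scalar contributions combine into the required factor'' --- starting from a misquoted relation --- leaves the core of the proposition unproven.

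Two further steps need repair. First, you call the $J$-relations ``routine monomial checks,'' but in \cite{CHW1} the generator $J_i$ does not commute with $E_j$: it satisfies $J_iE_jJ_i^{-1}=\pi^{i\cdot j}E_j$, whereas the target relation in $(R_22)$ is $J_i'E_j=E_jJ_i'$ with no sign. These are compatible only because $i\cdot j\in 2\mbb Z$ for a bar-consistent super Cartan datum (if $\mcal P(i)=1$ then $a_{ij}$ is even; if $\mcal P(i)=0$ then $d_i$ is even), whence $\pi^{i\cdot j}=1$; this observation is indispensable and absent from your argument. Second, your injectivity argument borrows the triangular decomposition of Section \ref{sec3.4}, but the paper records that decomposition only for $\mbf U_{\beta,\xi}$ itself, not for the subalgebra $\mbf U_{v,\mbf i}^{sq}$ of the quotient by $\mcal I$. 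To transport it you need that each $K_iK_i'J_i$ is central in $\mbf U_{v,\mbf i}$ --- true in this specialization because $\langle i,j\rangle=\langle j,i\rangle$ and $\xi(i,j)\xi(j,i)=1$ --- so that $\mcal I$ is generated by central group-like elements minus $1$ and the quotient only collapses the torus; and your count of ``free group-like generators'' on both sides requires pinning down the torus relations of $U_{q,\mathbf{i}}$ in \cite{CHW1} (in particular whether $J_i^2=1$ is imposed there), which you never do. Until these points are settled, both the well-definedness and the injectivity of $\varsigma$ remain open.
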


\subsection{The entire  Multi-parameter quantum algebras}
  In the case of the multi-parameter quantum  algebra (see Example \ref{exmul}), we set
\[\beta(i,j)=v^{i\cdot j}q_{ij}\quad{\rm and}\quad\xi(i,j)=1,\quad \forall i, j\in I.\]
Let $\mathcal{G}(\nu)=1$ for all $\nu\in \mbb N[I]$, then \eqref{eqg} holds.

 Under these settings, we get the following presentation of an entire multi-parameter quantum algebra $\mbf U_{v,\mathbf{q}}$,
generated by symbols $E_i, F_i, K_i^{\pm 1}, K_i'^{\pm 1}, $
$J_i^{\pm 1}, J_i'^{\pm 1}$, $\forall i\in I$, and subjects to the following relations.
\allowdisplaybreaks
\begin{eqnarray*}
  (R_31)& &J_i^{\pm 1}, J_i'^{\pm 1}, K_i^{\pm 1}\ \text{and}\  K'^{\pm 1}_i\ \text{commute with each other},\\
 & &    K_i^{\pm 1}K_i^{\mp 1}=K'^{\pm 1}_iK'^{\mp 1}_i=J_i^{\pm 1}J_i^{\mp 1}=J_i'^{\pm 1}J_i'^{\mp 1}=1.\\
  (R_32)& &K_iE_jK^{-1}_i=q_{ij} E_j,\ \ K'_iE_jK'^{-1}_i=q_{ji}^{-1}E_j,\\
     & &K'_iF_jK'^{-1}_i=q_{ji}F_j,\ \ K_iF_jK^{-1}_i=q_{ij}^{-1}F_j,\\
     & &J_iE_jJ^{-1}_i=E_j,\ \ J'_iE_jJ'^{-1}_i=E_j,\ \ J_iF_jJ^{-1}_i=F_j,\ \ J'_iF_jJ'^{-1}_i=F_j. \\
  (R_33)& &E_iF_j-F_j E_i=\delta_{ij}\frac{K_iJ'_i-J_iK'_i}{v_i-v^{-1}_i}.\\
  (R_34)& & \sum_{k+k'=1-a_{ij}}(-1)^k\begin{bmatrix}
    1-a_{ij}\\k
  \end{bmatrix}_{v_i}(v^{i\cdot j}q_{ij})^{-k}E_i^kE_j E_i^{k'}=0,\quad {\rm if}\ i\not =j,\\
  & & \sum_{k+k'=1-a_{ij}}(-1)^k\begin{bmatrix}
    1-a_{ij}\\k
  \end{bmatrix}_{v_i}(v^{i\cdot j}q_{ij})^{-k}F_i^{k'}F_j F_i^{k}=0,\quad {\rm if}\ i\not =j.
\end{eqnarray*}

Let $U_{\mathbf{q}}$ be the multi-parameter quantum algebra in \cite{HPR} associated to $(I,\cdot)$.
\begin{prop}
 Under the assumption \eqref{qii}, the map $\Psi: U_{\mathbf{q}}\rightarrow \mbf U_{v,\mathbf{q}}/\langle J_i-1, J'_i-1\rangle$ 
 sending $e_i\mapsto E_i$, $f_i\mapsto F_i$, $\omega_{i}\mapsto -v_iK_i$ and $\omega'_i\mapsto -v_iK'_i$,  
 for all $i\in I$ is a $\mathbb{Q}$-algebra isomorphism.
\end{prop}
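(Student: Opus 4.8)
The plan is to realize $\Psi$ as a well-defined algebra homomorphism by checking it respects the defining relations of $U_{\mathbf q}$ recalled from \cite{HPR}, and then to deduce bijectivity from the triangular decomposition. First I would recall the presentation of $U_{\mathbf q}$: it is generated by $e_i,f_i,\omega_i^{\pm 1},\omega_i'^{\pm 1}$ subject to the commutativity and invertibility of the $\omega_i^{\pm 1},\omega_i'^{\pm 1}$; the Cartan-type relations $\omega_ie_j\omega_i^{-1}=q_{ij}e_j$, $\omega_i'e_j\omega_i'^{-1}=q_{ji}^{-1}e_j$, $\omega_if_j\omega_i^{-1}=q_{ij}^{-1}f_j$, $\omega_i'f_j\omega_i'^{-1}=q_{ji}f_j$; an $e$--$f$ commutator relation expressing $e_if_j-f_je_i$ as a scalar multiple of $\delta_{ij}(\omega_i-\omega_i')$; and the quantum Serre relations with coefficient $(v^{i\cdot j}q_{ij})^{-k}$, matching $(R_34)$.

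Next I would verify these relations one at a time inside $\mbf U_{v,\mathbf q}/\langle J_i-1,J_i'-1\rangle$. For the Cartan-type relations the scalar $-v_i$ in $\Psi(\omega_i)=-v_iK_i$ cancels under conjugation, so they follow immediately from $(R_32)$; for instance $\Psi(\omega_i)\Psi(e_j)\Psi(\omega_i)^{-1}=K_iE_jK_i^{-1}=q_{ij}E_j$, and likewise for the $\omega_i'$ and $f_j$ cases. The Serre relations map to $(R_34)$ verbatim, since $\Psi(e_i)=E_i$ and, by \eqref{eqmul} in Example \ref{exmul}, the form $\beta(i,j)=v^{i\cdot j}q_{ij}$ produces exactly the required coefficient $(v^{i\cdot j}q_{ij})^{-k}$ guaranteed by Proposition \ref{propserrerelation}. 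The one relation requiring care is the commutator: in the quotient $(R_33)$ reads $E_iF_j-F_jE_i=\delta_{ij}(K_i-K_i')/(v_i-v_i^{-1})$, and I would substitute $K_i=-v_i^{-1}\Psi(\omega_i)$ and $K_i'=-v_i^{-1}\Psi(\omega_i')$, invoking the hypothesis \eqref{qii} that $q_{ii}=v_i^{-2}$ to rewrite the denominator and confirm that this equals $\Psi$ of the corresponding scalar multiple of $\delta_{ij}(\omega_i-\omega_i')$ in $U_{\mathbf q}$. This establishes that $\Psi$ is a homomorphism.

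Finally, for bijectivity I would compare triangular decompositions. On the target side, quotienting the decomposition $\mbf U_{\beta,\xi}\simeq\mbf U^+\otimes\mbf U^0\otimes\mbf U^-$ from Section \ref{sec3.4} by $\langle J_i-1,J_i'-1\rangle$ yields $\mbf U^+\otimes\mbf U^0_{\mathrm{red}}\otimes\mbf U^-$, where $\mbf U^0_{\mathrm{red}}$ is the Laurent ring in $K_i^{\pm 1},K_i'^{\pm 1}$ and $\mbf U^\pm$ is the Serre-relation algebra on the $E_i$ (resp.\ $F_i$), i.e.\ a copy of $\mrk f_{v,\mathbf q}$ by Proposition \ref{propserrerelation} and Example \ref{exmul}. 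The algebra $U_{\mathbf q}$ carries the analogous decomposition by \cite{HPR}, and $\Psi$ is compatible with the two: it identifies the positive (resp.\ negative) parts via $e_i\mapsto E_i$ (resp.\ $f_i\mapsto F_i$), and restricts to the invertible rescaling $\omega_i^{\pm 1}\mapsto(-v_i)^{\pm 1}K_i^{\pm 1}$, $\omega_i'^{\pm 1}\mapsto(-v_i)^{\pm 1}K_i'^{\pm 1}$ of the Cartan parts. Being an isomorphism on each tensor factor, $\Psi$ is an isomorphism. I expect the main obstacle to be the commutator computation: pinning down the exact scalar on the right-hand side of $(R_33)$ after the twist, where both the sign and the power of $v_i$ in $-v_i$ together with the hypothesis \eqref{qii} are essential, while the remaining relations are either a direct comparison or a formal consequence of the triangular decomposition.
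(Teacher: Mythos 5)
The paper itself states this proposition with no proof at all, so there is no ``paper route'' to compare against; your strategy---verify the relations of $U_{\mathbf q}$ from \cite{HPR} against $(R_31)$--$(R_34)$ in the quotient, then deduce bijectivity from the two triangular decompositions (legitimate here since $\xi\equiv 1$ makes $J_i,J_i'$ central, so the quotient splits as you say)---is the natural and presumably intended argument. The genuine gap is that you defer exactly the step in which the whole content of the proposition sits. Carrying out your own substitution: $(R_33)$ in the quotient reads $E_iF_i-F_iE_i=(K_i-K_i')/(v_i-v_i^{-1})$, and $K_i=-v_i^{-1}\Psi(\omega_i)$, $K_i'=-v_i^{-1}\Psi(\omega_i')$ give
\[
E_iF_i-F_iE_i\;=\;\frac{\Psi(\omega_i)-\Psi(\omega_i')}{1-v_i^2}\;=\;\frac{q_{ii}\bigl(\Psi(\omega_i)-\Psi(\omega_i')\bigr)}{q_{ii}-1},
\]
using $1-v_i^2=v_i^2(q_{ii}-1)$ from \eqref{qii}. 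If the commutator in \cite{HPR} is normalized as $e_if_j-f_je_i=\delta_{ij}(\omega_i-\omega_i')/(q_{ii}-1)$---the commonly quoted form---this does \emph{not} match: it is off by the factor $q_{ii}=v_i^{-2}$, and the verification closes only with the rescaled twist $\omega_i\mapsto -v_i^{-1}K_i$, $\omega_i'\mapsto -v_i^{-1}K_i'$ (equivalently, only if HPR's denominator is $1-q_{ii}^{-1}$ rather than $q_{ii}-1$). So ``confirm that this equals $\Psi$ of the corresponding scalar multiple'' is not a routine step you may postpone: it requires pinning down HPR's normalization explicitly, and under the standard one your argument, run with the constant $-v_i$ of the statement, fails by a factor of $q_{ii}$.

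A second, smaller inaccuracy: the Serre relations do not map ``verbatim'' on the negative side. Because $\iota^-$ is an algebra \emph{anti}-isomorphism, the $F$-relation in $(R_34)$ is $\sum_{k+k'=1-a_{ij}}(-1)^k(v^{i\cdot j}q_{ij})^{-k}F_i^{k'}F_jF_i^{k}=0$, with reversed ordering; reindexing $k\leftrightarrow k'$ shows this is equivalent to the standardly ordered relation with coefficient $(v^{i\cdot j}q_{ij})^{+k}$, not $(v^{i\cdot j}q_{ij})^{-k}$. Matching it with the $f$-Serre relation of \cite{HPR} therefore needs the same kind of convention check (typically the $f$-coefficients are inverted relative to the $e$-coefficients) and cannot be dismissed as a verbatim comparison. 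Neither issue changes the architecture of your proof---relation check plus triangular decomposition is the right skeleton, and your Cartan-relation and bijectivity steps are fine---but both computations must actually be carried out, and as written the first one does not come out in favor of the map in the statement.
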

By Proposition \ref{proptwoparameter}, if we set $J_i=J'_i=1$ in Section \ref{sec3}, 
then the construction in Section \ref{sec3} provides a Drinfeld double construction of $U_{\mathbf{q}}$. 
This recovers the result in \cite{HPR}.

\subsection{The entire  Multi-parameter quantum superalgebras I}
  In the case of the multi-parameter quantum super algebra (see Example \ref{exmulsuper1}), 
  we further assume that $s_{ij}=s_{ji}$. Then, we set
\[\xi(i,j)=s_{ij}^{-1}\quad {\rm and}\quad \beta(i,j)=s_{ji}^{-1}p_{ji}v_i^{-a_{ij}}\ ,\forall i, j\in I.\]
Let $\mathcal{G}(\nu)=\prod_{i,j\in I}s_{ij}^{-\nu(i)\nu(j)/2}$, for all $\nu\in \mbb N[I]$ 
(i.e. $\nu=\sum_i\nu(i)i$), then \eqref{eqg} holds.
Under these settings, we obtain an entire multi-parameter quantum superalgebra $\mbf U_{\mathbf{s},\mathbf{p}}$,
generated by symbols $E_i, F_i, K_i^{\pm 1}, K_i'^{\pm 1}, J_i^{\pm 1}, J_i'^{\pm 1}$, 
$\forall i\in I$, and subjects to the following relations.
\allowdisplaybreaks
\begin{eqnarray*}
  (R_41) & &J_i^{\pm 1}, J_i'^{\pm 1}, K_i^{\pm 1}\ \text{and}\  K'^{\pm 1}_i\ \text{commute with each other},\\
 & &    K_i^{\pm 1}K_i^{\mp 1}=K'^{\pm 1}_iK'^{\mp 1}_i=J_i^{\pm 1}J_i^{\mp 1}=J_i'^{\pm 1}J_i'^{\mp 1}=1.\\
  (R_42)& &K_iE_j=p_{ij}^{-1} E_jK_i,\quad K'_iE_j=p_{ji}s_{ji}^{-1} E_jK'_i,\\
     & &K_iF_j=p_{ij} F_jK_i,\quad K'_iF_j=p_{ji}^{-1} s_{ji} F_jK'_i,\\
     & &J_iE_j=s_{ji}^{-1}E_jJ_i,\quad  J'_iE_j=E_jJ'_i,\\
     & &J_iF_j=s_{ji}F_jJ_i,\ \ J'_iF_j=F_jJ'_i.\\
  (R_43)& &E_iF_j-s_{ji}F_j E_i=\delta_{ij}\frac{K_iJ'_i-J_iK'_i}{v_i-v^{-1}_i}.\\
  (R_44)& & \sum_{k+k'=1-a_{ij}}(-1)^k\begin{bmatrix}
    1-a_{ij}\\k
  \end{bmatrix}_{v_i}s_{ji}^kp_{ji}^{-k}v_i^{ka_{ij}} E_i^kE_j E_i^{k'}=0,\quad {\rm if}\ i\not =j,\\
  & & \sum_{k+k'=1-a_{ij}}(-1)^k\begin{bmatrix}
    1-a_{ij}\\k
  \end{bmatrix}_{v_i}s_{ji}^kp_{ji}^{-k}v_i^{ka_{ij}}F_i^{k'}F_j F_i^{k}=0,\quad {\rm if}\ i\not =j.
\end{eqnarray*}

By \eqref{conapply} and the assumption $s_{ij}=s_{ji}$, we have
\begin{equation}\label{pij}
p_{ij}=p_{ji}s_{ji}^{-2},\quad \forall i,j \in I.
\end{equation}

Let $U_{\mathbf{s},\mathbf{p}}$ be the multi-parameter quantum superalgebra defined in \cite[Section 2]{KKO13} 
associated to $(I,\cdot)$ which is a bar-consistent super Cartan datum.
Let $\mcal J$ be the ideal of $\mbf U_{\mathbf{s},\mathbf{p}}$ generated by $\{K_iK_i'J_iJ_i'-1,\ \forall i\in I\}$,
and $\mbf U_{\mathbf{s},\mathbf{p}}^{sq}$ the subalgebra of $\mbf U_{\mathbf{s},\mathbf{p}}/\mcal J$ 
generated by $E_i, F_i, (K_i'J_i)^{\pm} $ and $(K_iJ_i')^{\pm}, \forall i \in I$.
\begin{prop}
 Under the assumptions \eqref{pi} and $s_{ij}=s_{ji}$, the map 
 $\Gamma: U_{\mathbf{s},\mathbf{p}}\rightarrow \mbf U_{\mathbf{s},\mathbf{p}}^{sq}$ 
 sending $e_i\mapsto E_i$, $f_i\mapsto F_i$, $K_i\mapsto -h_i K'_iJ_i$ for all $i\in I$ is a $\mbb Q$-algebra isomorphism.
\end{prop}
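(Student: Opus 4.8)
The plan is to verify that $\Gamma$ is a well-defined algebra homomorphism by checking that the images $E_i=\Gamma(e_i)$, $F_i=\Gamma(f_i)$ and $-h_iK'_iJ_i=\Gamma(K_i)$ satisfy the defining relations of $U_{\mathbf{s},\mathbf{p}}$ recorded in \cite[Section 2]{KKO13}, and then to deduce bijectivity from the triangular decompositions on both sides. First I would dispose of the Cartan part: in $\mbf U_{\mathbf{s},\mathbf{p}}/\mcal J$ the relation $K_iK_i'J_iJ_i'=1$ reads $(K_iJ_i')(K_i'J_i)=1$, so $K_i'J_i=(K_iJ_i')^{-1}$ and $\Gamma(K_i)=-h_i(K_iJ_i')^{-1}$ is invertible with $\Gamma(K_i)^{-1}=\Gamma(K_i^{-1})$; since $h_i=\pm1$ is a scalar and the $J,J',K,K'$ commute, the relations asserting that the images of the $K_i$ commute and are invertible hold automatically.

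The substantive checks are the weight action and the $E$--$F$ commutator. From $(R_42)$ I would compute
\[
(K_i'J_i)E_j(K_i'J_i)^{-1}=p_{ji}s_{ji}^{-2}E_j,\qquad (K_i'J_i)F_j(K_i'J_i)^{-1}=p_{ji}^{-1}s_{ji}^{2}F_j,
\]
and then invoke \eqref{pij}, i.e. $p_{ij}=p_{ji}s_{ji}^{-2}$, to rewrite the eigenvalues as $p_{ij}$ and $p_{ij}^{-1}$; as $-h_i$ is central this yields $\Gamma(K_i)E_j\Gamma(K_i)^{-1}=p_{ij}E_j$ and $\Gamma(K_i)F_j\Gamma(K_i)^{-1}=p_{ij}^{-1}F_j$, as required in \cite{KKO13}. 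For the commutator I would use $(R_43)$ together with the quotient identity $K_iJ_i'-J_iK_i'=(K_iJ_i')-(K_iJ_i')^{-1}=h_i(\Gamma(K_i)-\Gamma(K_i)^{-1})$ and the consequence $p_i-p_i^{-1}=h_i(v_i-v_i^{-1})$ of \eqref{pi}; since $h_i^2=1$, these combine to turn $(R_43)$ into the expected $e_if_j-s_{ji}f_je_i=\delta_{ij}(\Gamma(K_i)-\Gamma(K_i)^{-1})/(p_i-p_i^{-1})$. The quantum Serre relations require no computation at all: the coefficient $s_{ji}^kp_{ji}^{-k}v_i^{ka_{ij}}$ appearing in $(R_44)$ is exactly the one in the defining Serre relation of Example~\ref{exmulsuper1}, so they transport directly under $\theta_i\mapsto E_i$ and $\theta_i\mapsto F_i$. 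This establishes that $\Gamma$ is a homomorphism.

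Granting well-definedness, surjectivity is immediate, since $E_i$, $F_i$, $K_i'J_i=\Gamma(-h_iK_i)$ and $K_iJ_i'=\Gamma(-h_iK_i^{-1})$ exhaust the generators of $\mbf U_{\mathbf{s},\mathbf{p}}^{sq}$. For injectivity I would exploit the triangular decomposition $\mbf U_{\beta,\xi}\simeq\mbf U^+\otimes\mbf U^0\otimes\mbf U^-$ from Section~\ref{sec3.4}, which I expect to descend to $\mbf U_{\mathbf{s},\mathbf{p}}^{sq}$ with Cartan part the Laurent algebra on $(K_iJ_i')^{\pm}$, together with the analogous triangular decomposition of $U_{\mathbf{s},\mathbf{p}}$. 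As $\Gamma$ is graded it restricts to the tautological isomorphisms on the positive and negative parts (each being the quotient of the free algebra by the same Serre ideal) and to the Laurent-algebra isomorphism $K_i\mapsto-h_i(K_iJ_i')^{-1}$ on the Cartan part, so it is a bijection factor by factor and hence an isomorphism. The main obstacle I anticipate is purely bookkeeping: pinning down the precise normalization and sign conventions of the \cite{KKO13} relations so that the factors $-h_i$, the commutator coefficient $s_{ji}$, and the denominator $p_i-p_i^{-1}$ all line up exactly once \eqref{pi} and $s_{ij}=s_{ji}$ are imposed.
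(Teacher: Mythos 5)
The paper offers no proof of this proposition at all---it is stated as a routine specialization check---so your write-up is in effect supplying the argument the authors left implicit, and it is correct. All of your computations are consistent with the paper's relations: $(K_i'J_i)E_j(K_i'J_i)^{-1}=p_{ji}s_{ji}^{-2}E_j=p_{ij}E_j$ by \eqref{pij}, the quotient identity $K_iJ_i'-J_iK_i'=h_i\bigl(\Gamma(K_i)-\Gamma(K_i)^{-1}\bigr)$ together with $p_i-p_i^{-1}=h_i(v_i-v_i^{-1})$ from \eqref{pi} turn $(R_43)$ into the KKO commutator relation, and the Serre relations transport verbatim; the fact that the target map is only $\mbb Q$-linear (the scalars being matched via $p_i\mapsto v_ih_i$) is exactly why the statement says ``$\mbb Q$-algebra isomorphism.'' The one step you leave as an ``expectation''---that the triangular decomposition of Section \ref{sec3.4} descends to $\mbf U_{\mathbf{s},\mathbf{p}}^{sq}$---does hold but deserves an explicit line: each generator $K_iK_i'J_iJ_i'-1$ of $\mcal J$ is central, since commuting $K_iK_i'J_iJ_i'$ past $E_j$ (resp.\ $F_j$) produces the factor $p_{ij}^{-1}p_{ji}s_{ji}^{-2}=1$ (resp.\ its inverse) by \eqref{pij} and $s_{ij}=s_{ji}$; hence $\mbf U_{\mathbf{s},\mathbf{p}}/\mcal J\simeq\mbf U^+\otimes\bigl(\mbf U^0/\langle K_iK_i'J_iJ_i'-1\rangle\bigr)\otimes\mbf U^-$, the Cartan part of $\mbf U_{\mathbf{s},\mathbf{p}}^{sq}$ is the Laurent ring in $(K_i'J_i)^{\pm1}$ (as $K_iJ_i'=(K_i'J_i)^{-1}$ in the quotient), and your factor-by-factor bijectivity argument then goes through.
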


\subsection{The entire  Multi-parameter quantum superalgebras II}
  In the case of the multi-parameter quantum superalgebra (see Example \ref{exmulsuper2}), 
  we further assume that $\tilde{s}_{ij}=\tilde{s}_{ji}$. Then, we set
\[\xi(i,j)=\tilde{s}_{ij}^{-1}\quad {\rm and}\quad \beta(i,j)=\tilde{s}_{ij}\tilde{p}_i^{a_{ij}/2}\ ,\forall i, j\in I.\]

Let $\mathcal{G}(\nu)=\prod_{i,j\in I}\tilde{s}_{ij}^{-\nu(i)\nu(j)/2}$, 
for all $\nu\in \mbb N[I]$ (i.e., $\nu=\sum_i\nu(i)i$), then \eqref{eqg} holds.

Under these settings, we obtain an entire multi-parameter quantum superalgebra 
$\mbf U_{\tilde{\mathbf{s}},\tilde{\mathbf{p}}}$, generated by symbols 
$E_i, F_i, K_i^{\pm 1}, K_i'^{\pm 1}, J_i^{\pm 1}, J_i'^{\pm 1}$, $\forall i\in I$, and subjects to the following relations.
\allowdisplaybreaks
\begin{eqnarray*}
  (R_51) & &J_i^{\pm 1}, J_i'^{\pm 1}, K_i^{\pm 1}\ \text{and}\  K'^{\pm 1}_i\ \text{commute with each other},\\
 & &    K_i^{\pm 1}K_i^{\mp 1}=K'^{\pm 1}_iK'^{\mp 1}_i=J_i^{\pm 1}J_i^{\mp 1}=J_i'^{\pm 1}J_i'^{\mp 1}=1.\\
  (R_52)& &K_iE_j=E_jK_i,\quad K'_iE_j=\tilde{s}_{ij}\tilde{p}_i^{a_{ij}} E_jK'_i,\\
     & &K_iF_j=F_jK_i,\quad K'_iF_j=\tilde{s}_{ij}^{-1}\tilde{p}_i^{-a_{ij}} F_jK'_i,\\
     & &J_iE_j=\tilde{s}_{ji}^{-1}E_jJ_i,\quad  J'_iE_j=E_jJ'_i,\\
     & &J_iF_j=\tilde{s}_{ji}F_jJ_i,\ \ J'_iF_j=F_jJ'_i.\\
  (R_53)& &E_iF_j-\tilde{s}_{ji}F_j E_i=\delta_{ij}\frac{K_iJ'_i-J_iK'_i}{v_i-v^{-1}_i}.\\
  (R_54)& & \sum_{k+k'=1-a_{ij}}(-1)^k\begin{bmatrix}
    1-a_{ij}\\k
  \end{bmatrix}_{v_i}(\tilde{s}_{ij}\tilde{p}_{i}^{a_{ij}/2})^{-k} E_i^kE_j E_i^{k'}=0,\quad {\rm if}\ i\not =j,\\
  & & \sum_{k+k'=1-a_{ij}}(-1)^k\begin{bmatrix}
    1-a_{ij}\\k
  \end{bmatrix}_{v_i}(\tilde{s}_{ij}\tilde{p}_{i}^{a_{ij}/2})^{-k}F_i^{k'}F_j F_i^{k}=0,\quad {\rm if}\ i\not =j.
\end{eqnarray*}

Let $U_{\tilde{\mathbf{s}},\tilde{\mathbf{p}}}$ be the multi-parameter quantum superalgebra 
defined in \cite[Section 3]{KKO13} associated to $(I,\cdot)$ which is a bar-consistent super Cartan datum.
Let $\mcal L$ be the ideal of $\mbf U_{\tilde{\mathbf{s}},\tilde{\mathbf{p}}}$ generated by $\{K_iJ'_i+v_i^{-1},\ \forall i\in I\}$, and
$\mbf U_{\tilde{\mathbf{s}},\tilde{\mathbf{p}}}^{sq}$ the subalgebra of 
$\mbf U_{\tilde{\mathbf{s}},\tilde{\mathbf{p}}}/\mcal L$ generated by $E_i, F_i$ and $(K_i'J_i)^{\pm}, \forall i\in I$.
\begin{prop}
 Under the assumptions \eqref{tildepi} and $\tilde{s}_{ij}=\tilde{s}_{ji}$, 
 the map $\sigma: U_{\tilde{\mathbf{s}},\tilde{\mathbf{p}}}\rightarrow \mbf U_{\tilde{\mathbf{s}},\tilde{\mathbf{p}}}^{sq}$ 
 sending $e_i\mapsto E_i$, $f_i\mapsto F_i$ and  $\tilde K_i\mapsto -v_iK_i'J_i$ for all $i\in I$ is a $\mbb Q$-algebra isomorphism.
\end{prop}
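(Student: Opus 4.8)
The plan is to follow the template of Proposition~\ref{proptwoparameter} and its super-analogues: exhibit $\sigma$ as a well-defined algebra homomorphism, check surjectivity on generators, and then deduce injectivity from the triangular decompositions on both sides. First I would recall the presentation of $U_{\tilde{\mathbf{s}},\tilde{\mathbf{p}}}$ from \cite[Section 3]{KKO13} in the generators $e_i,f_i,\tilde K_i^{\pm 1}$: the invertibility and mutual commutativity of the $\tilde K_i$, the $\tilde K_i$-conjugation rules on $e_j$ and $f_j$, the $(e_i,f_j)$-commutator whose right-hand side is a scalar multiple of $\delta_{ij}(\tilde K_i-1)$, and the two families of Serre relations.

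Next I would verify that the images $E_i,F_i$ and $\tilde K_i:=-v_iK_i'J_i$ satisfy exactly these relations inside the quotient $\mbf U_{\tilde{\mathbf{s}},\tilde{\mathbf{p}}}/\mcal L$, which shows $\sigma$ is a well-defined homomorphism. Since $K_i',J_i$ are invertible and all of $K_i^{\pm1},K_i'^{\pm1},J_i^{\pm1},J_i'^{\pm1}$ commute by $(R_51)$, the elements $\tilde K_i$ are invertible and mutually commuting. The conjugation relations follow from $(R_52)$: combining $K_i'E_j(K_i')^{-1}=\tilde s_{ij}\tilde p_i^{a_{ij}}E_j$ with $J_iE_jJ_i^{-1}=\tilde s_{ji}^{-1}E_j$ and using $\tilde s_{ij}=\tilde s_{ji}$ together with $\tilde s_{ij}\tilde s_{ji}=\tilde p_i^{-a_{ij}}$ yields $\tilde K_iE_j\tilde K_i^{-1}=\tilde p_i^{a_{ij}}E_j$, and symmetrically for $F_j$. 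The Serre relations $(R_54)$ coincide verbatim with those of $\mrk f_{\tilde{\mathbf{s}},\tilde{\mathbf{p}}}$ from Example~\ref{exmulsuper2}, so nothing new is needed there.

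The delicate step is the commutator $(R_53)$, and this is where the ideal $\mcal L=\langle K_iJ_i'+v_i^{-1}\rangle$ does its work: its sole purpose is to collapse one of the two toral elements on the right-hand side to a scalar. In the quotient one has $K_iJ_i'=-v_i^{-1}$, while $J_iK_i'=K_i'J_i=-v_i^{-1}\tilde K_i$; substituting these into $\delta_{ij}(K_iJ_i'-J_iK_i')/(v_i-v_i^{-1})$ turns the right-hand side of $(R_53)$ into a scalar multiple of $\delta_{ij}(\tilde K_i-1)$, matching the commutator of $U_{\tilde{\mathbf{s}},\tilde{\mathbf{p}}}$ once $\tilde p_i=v_i^2$ from \eqref{tildepi} is invoked. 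I expect the careful tracking of the scalar $-v_i$ and the signs here to be the main point to get right. Surjectivity is then immediate, since $E_i=\sigma(e_i)$, $F_i=\sigma(f_i)$ and $(K_i'J_i)^{\pm1}=\sigma\bigl((-v_i^{-1}\tilde K_i)^{\pm1}\bigr)$ together generate $\mbf U_{\tilde{\mathbf{s}},\tilde{\mathbf{p}}}^{sq}$.

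Finally, for injectivity I would use that $\mbf U_{\beta,\xi}$ admits a triangular decomposition $\mbf U^+\otimes\mbf U^0\otimes\mbf U^-$ (Section~\ref{sec3.4}), which descends to $\mbf U_{\tilde{\mathbf{s}},\tilde{\mathbf{p}}}^{sq}\cong\mbf U^+\otimes\mbf U^{0,sq}\otimes\mbf U^-$, where $\mbf U^{0,sq}$ is the Laurent polynomial algebra on the $(K_i'J_i)^{\pm1}$ (the relation $\mcal L$ expresses $J_i'$ through $K_i^{-1}$ up to a unit, leaving these algebraically independent). On the other side $U_{\tilde{\mathbf{s}},\tilde{\mathbf{p}}}$ carries the analogous decomposition $U^+\otimes U^0\otimes U^-$ with $U^{\pm}\cong\mrk f_{\tilde{\mathbf{s}},\tilde{\mathbf{p}}}$ and $U^0$ Laurent on the $\tilde K_i^{\pm1}$. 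The homomorphism $\sigma$ respects these decompositions and restricts to an isomorphism on each factor: on $U^{\pm}$ it is the identification of $\mrk f_{\tilde{\mathbf{s}},\tilde{\mathbf{p}}}$ (Example~\ref{exmulsuper2}) with $\mbf U^{\pm}$ via $\iota^{\pm}$, and on $U^0$ it is the unit rescaling $\tilde K_i\mapsto -v_iK_i'J_i$ between two Laurent polynomial algebras on $|I|$ generators. Being a tensor product of isomorphisms compatible with the triangular decomposition, $\sigma$ is an isomorphism.
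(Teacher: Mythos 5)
The paper states this proposition without any proof, and your argument supplies exactly the verification the authors leave implicit: check the relations to get a well-defined homomorphism, note surjectivity on generators, and deduce injectivity from the triangular decompositions; your key computations are correct, in particular $\tilde K_iE_j\tilde K_i^{-1}=\tilde p_i^{a_{ij}}E_j$ (only $\tilde s_{ij}=\tilde s_{ji}$ is needed there) and the collapse of $(R_53)$ to $\delta_{ij}(\tilde K_i-1)/(v_i^2-1)=\delta_{ij}(\tilde K_i-1)/(\tilde p_i-1)$ via $K_iJ_i'=-v_i^{-1}$, $K_i'J_i=-v_i^{-1}\tilde K_i$ and \eqref{tildepi}. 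The one point you should make explicit when invoking the triangular decomposition of the quotient is that under this specialization $\langle i,j\rangle=1$, so the elements $K_iJ_i'+v_i^{-1}$ generating $\mcal L$ are central and lie in $\mbf U^0$; this is what guarantees that $\mbf U_{\tilde{\mathbf{s}},\tilde{\mathbf{p}}}/\mcal L\simeq \mbf U^+\otimes(\mbf U^0/\langle K_iJ_i'+v_i^{-1}\rangle)\otimes\mbf U^-$ with the halves $\mbf U^{\pm}$ untouched, which your injectivity argument requires.
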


\end{document}